\documentclass[12pt]{article}
\usepackage{paralist,amsmath,amsthm,amssymb,mathtools,url,graphicx,pdfpages,tikz,pdfpages,rotating}

\usetikzlibrary{positioning,chains,fit,shapes,calc}
\usetikzlibrary{arrows,automata}
\usepackage[affil-it]{authblk}
\usepackage[left=1in,right=1in,top=1.2in, bottom=1in]{geometry}
\usepackage{mathdots}
\usepackage{bm} 
\usepackage{indentfirst}
\usepackage{xcolor}
\usepackage{graphicx}
\usepackage[
    colorlinks=true,
    citecolor=blue,   
    linkcolor=black,   
    urlcolor=blue,    
    filecolor=blue    
]{hyperref}

\newtheorem{theorem}{Theorem}[section]
\newtheorem{lemma}[theorem]{Lemma}

\theoremstyle{definition}
\newtheorem{definition}[theorem]{Definition}
\newtheorem{example}[theorem]{Example}
\newtheorem{observation}[theorem]{Observation}

\numberwithin{equation}{section}

\renewcommand{\textcolor}[3][]{%
  \color{black}#3%
}
\renewcommand{\color}[2][]{}

\DeclareMathOperator{\vol}{vol}
\renewcommand{\i}{\rm i}
\renewcommand{\Im}{\rm Im}
\renewcommand{\Re}{\rm Re}

\DeclareMathOperator{\rank}{rank}

\newcommand{\R}{\mathbb{R}}
\newcommand{\C}{\mathbb{C}}

\title{Combinatorial formula for the Moore--Penrose inverse of the complex signless Laplacian of an oriented graph
}

\author{XiaoYang Liu\footnote{University of California, Berkeley, 110 Sproul Hall, Berkeley, California 94720 (xiaoyangliu@berkeley.edu).}, Sudipta Mallik\footnote{Department of Mathematics and Physics, Marshall University, 1 John Marshall Drive, Huntington, West Virginia 25755 (mallik@marshall.edu).},
and Anh Tang\footnote{Georgia Gwinnett College, 1000 University Center Lane, Lawrenceville, Georgia 30043 (atang1@ggc.edu).}
}
\date{}

\begin{document}

\maketitle

\begin{abstract}
We find necessary and sufficient conditions for the $\rank$ of the signless incidence matrix of a weakly connected oriented graph with non-zero complex edge weights. We use this to find the combinatorial formulas for the Moore--Penrose inverse of the complex signless incidence and complex signless Laplacian matrix of a weakly connected oriented graph with non-zero complex edge weights. This resolves the open problem posed in the concluding remarks of Barik et. al. (Discrete Mathematics 349 (9), 115117, 2026). 
\end{abstract}

\section{Introduction}

\textcolor{blue}{
\indent In 1965, the Moore-Penrose inverse was first studied in a combinatorial setting, specifically focusing on oriented incidence matrices of graphs \cite{I}. Then, Bapat and others studied the Laplacian and oriented incidence matrices of certain  classes of graphs such as trees and distance regular graphs \cite{bap,AB,ABE}. Research then extended to the study of the signless Laplacian of graphs by \cite{CRC,HM}, and Hessert and Mallik furthered studied the Moore-Penrose inverse of the incidence matrix and signless Laplacian of trees and unicyclic graphs \cite{HM1,HM2}. Work was later done on the incidence matrix and signless Laplacian of wheel graphs and bipartite graphs \cite{Ipsen,Milica1}.  Alazemi, Andeli\'c, and Mallik further extended this field by studying the more generalized setting of signed graphs \cite{Milica2}.
\\
\indent Recently, \cite{MP2026} found the combinatorial formulas for the Moore-Penrose inverse of the complex incidence matrix and complex Laplacian matrix of a weakly connected oriented graph with complex edge weights. They then asked the open question (viz. Question 5.1 in \cite{MP2026}) of finding the combinatorial formulas for the Moore-Penrose inverse of the complex signless incidence matrix and complex signless Laplacian matrix of a weakly connected oriented graph with complex edge weights which will be the objective of this article.
}

\textcolor{blue}{
An oriented graph is a specific type of directed graph where the underlying undirected graph— that is, the graph left when one removes the orientation of every edge— is a simple graph. A weakly connected oriented graph is a graph where 
the underlying graph is a simple and connected graph where every edge is assigned an orientation.
\\
\indent Let $G$ be a weakly connected oriented graph on $n$ vertices $V(G) = \{ 1, 2,...,n \} $ and $m$ edges $E(G) = \{ e_1, e_2,...,e_n \}$ with its underlying undirected graph $G_u$. Suppose further that the directed edge $(u,v)$, from vertex $u$ to vertex $v$, of $G$ has a non-zero complex weight denoted by $w_{uv}$. We define the complex
adjacency matrix, the complex signless incidence matrix, and the complex signless Laplacian matrix of $G$ using the following definitions.}

\begin{definition}
    \textcolor{blue}{The complex adjacency matrix of $G$ is the complex $n \times n$ matrix $A = (a_{ij})$ whose rows and columns are both indexed by the vertices of $G$ and is defined as:
    $$a_{ij}= 
    \begin{cases}
        w_{ij} & \text{if $(i,j) \in E(G)$} \\
        \overline{w_{ij}} & \text{if $(j,i) \in E(G)$} \\
        0 & \text{otherwise} \\
    \end{cases}
    $$}
\end{definition}
  
\begin{definition}
    \indent \textcolor{blue}{The complex signless incidence matrix of $G$ is the complex $n \times m$ matrix $N = (n_{ij})$ with rows indexed by the vertices of $G$ and whose columns are indexed by the edges of $G$, and is defined as:}
$$
\textcolor{blue}{n_{ij} = 
\begin{cases} 
\sqrt{w_{uv}} &  \text{if } i = u, \\ 
\sqrt{\overline{w_{uv}}} & \text{if } i = v, \\ 
0 & \text{otherwise} 
\end{cases}}
$$ 
where $e_j = (u, v)$ and  $\sqrt{w_{uv}}$ denotes the principal square root of  $w_{uv}$.
\end{definition}

\begin{definition}
    \color{blue} {The complex signless Laplacian matrix of $G$ is the complex $n \times n$ matrix $Q$ such that $Q = D + A$, where $D = (D)_{ij}$ is the diagonal matrix of order $n$ with $(D)_{ii} = \sum_{j \neq i} \lvert a_{ij} \rvert$. It can be shown that $Q = NN^*$ where $N^*$ is the conjugate transpose of $N$.}
\end{definition}

A walk,$\gamma$, in the underlying graph, $G_u$, is a sequence of vertices and edges where the set of edges traversed by $\gamma$ is denoted $E(\gamma)$ and the set of vertices is denoted $V(\gamma)$. Note that $\gamma$ may not repeat vertices and therefore edges. We follow the definition of the weight of a walk in the underlying connected, undirected graph, $G_u$, of a weakly connected oriented graph $G$ used in \cite{MP2026}. 
\begin{definition}
    Suppose $G$ is a weakly connected oriented graph on $n$ vertices $1,2,..,n$ and let $w_{uv}$ denote the weight of the directed edge $(u,v) \in E(G)$. For a walk in $G_u$, denoted as $\gamma = (i_1,i_2,...,i_k)$, we will define the weight of $\gamma$ to be 
    \[
    wt(\gamma) = \prod_{s = 1}^{k-1}a_{i_si_{s+1}} 
    \]
    where $a_{i_si_{s+1}}$ are the entries of the complex adjacency matrix defined above. 
    \\
    \indent The unit weight of $\gamma$ is defined as $\frac{wt(\gamma)}{|wt(\gamma)|}$. 
    \\
    \indent Note that the weight of a cycle $C$ in $G$ is the weight of any closed walk in $G_u$ which has 2 possible values, one weight is given by taking the clockwise orientation and the other is obtained by taking the counterclockwise direction. 
    \\
    \begin{observation}\label{treeunitweight}
        When $\gamma_{i \rightarrow j}$ is a walk from vertex $i$ to vertex $j$ in an underlying graph that is a tree, there is only one possible value for the unit weight, namely $\frac{wt(\gamma'_{i \rightarrow j})}{|wt(\gamma'_{i \rightarrow j})|}$ where $\gamma'_{i \rightarrow j}$ is the unique walk from vertex $i$ to vertex $j$ in the underlying tree.
    \end{observation}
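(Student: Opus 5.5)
The claim follows from the uniqueness of paths in a tree, once we note that a walk here is defined to have no repeated vertices. Let $T = G_u$ be the underlying tree and let $\gamma_{i\rightarrow j} = (i_1, i_2, \dots, i_k)$ be any walk in $T$ with $i_1 = i$ and $i_k = j$. Consecutive vertices are adjacent and no vertex is repeated, so $\gamma_{i\rightarrow j}$ is a path in $T$ from $i$ to $j$. The first step is to show that it coincides, as a vertex sequence, with the unique path $\gamma'_{i\rightarrow j}$.

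Suppose two distinct $i$--$j$ paths $P$ and $P'$ existed in $T$. Let $x$ be the last vertex of their common initial segment, where they first diverge. Following $P$ from $x$ until it first meets a vertex $y$ of $P'$ beyond $x$, and then returning along $P'$ from $y$ to $x$, gives a closed walk without repeated vertices of length at least $3$. This is a cycle, which contradicts acyclicity of $T$. Hence $\gamma_{i\rightarrow j} = \gamma'_{i\rightarrow j}$ as vertex sequences.

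The weight $wt(\gamma) = \prod_{s=1}^{k-1} a_{i_s i_{s+1}}$ is determined entirely by the ordered vertex sequence. Each factor is $w_{i_s i_{s+1}}$ or $\overline{w_{i_{s+1} i_s}}$, depending on the orientation of the edge $\{i_s, i_{s+1}\}$ in $G$, and this is fixed by $G$. Therefore $wt(\gamma_{i\rightarrow j}) = wt(\gamma'_{i\rightarrow j})$. The edge weights are nonzero, so this common value is nonzero, and the unit weights $\frac{wt(\gamma)}{|wt(\gamma)|}$ agree and are well defined.

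The only care needed is the direction of traversal: a walk from $i$ to $j$ is traversed in a fixed direction. Reversing it would give $\overline{wt}$ rather than the same value, since $a_{ji} = \overline{a_{ij}}$. The observation fixes the endpoints in order, so reversal does not arise. The trivial case $i = j$ gives the empty product, $1$, consistently. No step presents a real obstacle; the argument rests entirely on the no-repeated-vertex convention, which makes walks in a tree unique.
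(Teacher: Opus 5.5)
Your argument is correct and supplies exactly the reasoning the paper leaves implicit; the paper states this observation without proof. Under the convention that walks do not repeat vertices, any walk from $i$ to $j$ in a tree is the unique path, so its weight, and hence its unit weight, is determined. One remark: the paper later applies the observation to a walk that backtracks (e.g.\ in Case 2a of the proof of Theorem~\ref{mpeven}, where $\gamma_{u\rightarrow k}$ is followed by the edge $\{k,\ell\}$ even when $\ell$ lies on $\gamma_{u\rightarrow k}$). Your argument extends to that case at once, since each backtrack $x\to y\to x$ contributes the factor $a_{xy}a_{yx}=|a_{xy}|^2>0$, which leaves the unit weight unchanged.
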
 
\end{definition}

\begin{definition}
\textcolor{blue}{
    $W_G$ is defined as the product of all edge weights of an oriented graph $G$, and $|W_G|$ denotes the norm of $W_G$. 
    }
\end{definition}

\begin{definition}
    \textcolor{blue}{Following the standard definition, the Moore-Penrose Inverse of any $n \times m$ matrix $M$ is the unique $m \times n$ matrix, denoted as $M^+$, that satisfies all four criteria below:
    \[
    MM^+M = M , M^+MM^+ = M^+, (MM^+)^* = MM^+, (M^+M)^* = M^+M
    \]
    }
\end{definition}

\begin{observation}\label{negreal}
    \textcolor{blue}{
    For any edge $e_i = (u,v) \in E(G)$, we restrict the corresponding weight to obey the following condition: $w_{uv} \in \C \setminus \R^- \setminus \{0\}$. Throughout the following proofs, we frequently use the assumption that $\sqrt{a}\cdot\sqrt{\overline{a}} = |a|$ which is true if and only if $a$ is not a negative real number, where the square root is taken to be the principle square root. Thus, throughout the rest of the paper, we will restrict all edge weights to be non-zero values in $\C \setminus \R^-$.
    }
\end{observation}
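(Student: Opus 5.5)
The plan is to verify the identity $\sqrt{a}\cdot\sqrt{\overline{a}}=|a|$ directly from polar form, using the branch convention of the principal square root. Write a non-zero complex number as $a=re^{\i\theta}$ with $r=|a|>0$ and $\theta=\arg a\in(-\pi,\pi]$, the principal argument. The principal square root is then $\sqrt{a}=\sqrt{r}\,e^{\i\theta/2}$, where $\sqrt{r}$ is the positive real root. I will split into two cases according to whether $\theta=\pi$, which is exactly the case $a\in\R^-$.

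\emph{Case $\theta\in(-\pi,\pi)$, i.e.\ $a\notin\R^-$.} Here $\overline{a}=re^{-\i\theta}$, and $-\theta$ also lies in $(-\pi,\pi)$. So $-\theta$ is the principal argument of $\overline{a}$, and $\sqrt{\overline{a}}=\sqrt{r}\,e^{-\i\theta/2}=\overline{\sqrt{a}}$. Multiplying gives $\sqrt{a}\cdot\sqrt{\overline{a}}=|\sqrt a|^2=r=|a|$. Positive reals ($\theta=0$) are included in this case.

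\emph{Case $\theta=\pi$, i.e.\ $a=-r\in\R^-$.} Then $\overline{a}=a$, so $\sqrt{a}\cdot\sqrt{\overline{a}}=(\sqrt{a})^2=a=-r$. Since $r>0$, this differs from $|a|=r$. Alternatively, $\sqrt{a}=\i\sqrt r$ and the product is $(\i\sqrt r)^2=-r$.

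These two cases prove the ``if and only if''. The value $a=0$ is excluded anyway, since the paper assumes non-zero weights. The only delicate point, and thus the main obstacle, is the branch cut: $-\theta$ stays a principal argument precisely when $\theta\neq\pi$, because $-\pi\notin(-\pi,\pi]$. That is why negative reals are the sole exception. Consequently, for every edge weight $w_{uv}\in\C\setminus\R^-\setminus\{0\}$ we may use $\sqrt{w_{uv}}\sqrt{\overline{w_{uv}}}=|w_{uv}|$. For example, this gives that the $(u,v)$ entry of $NN^*$ is $\sqrt{w_{uv}}\cdot\overline{\sqrt{\overline{w_{uv}}}}=\sqrt{w_{uv}}\sqrt{w_{uv}}=w_{uv}$. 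It also gives that the diagonal entries are $\sum|w|$, so $Q=NN^*$ holds under this restriction.
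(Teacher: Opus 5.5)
Your argument is correct. The paper asserts this observation without proof, so there is no argument of the paper's to compare against. Your polar-form case split, on whether the principal argument equals $\pi$, supplies exactly the missing justification.

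It also yields the slightly stronger fact $\sqrt{\overline{a}}=\overline{\sqrt{a}}$ for $a\notin\R^-$. That is the form actually used later, for example to get $x_v=-x_u\,w_{uv}/|w_{uv}|$ in Theorem~\ref{thm:rank} and the off-diagonal entries of $NN^*$.

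Two small remarks. First, $a=0$ need not be set aside: $\sqrt{0}\cdot\sqrt{0}=0=|0|$, so the equivalence holds on all of $\C$. Second, the diagonal entries $(NN^*)_{uu}=\sum|w|$ hold for any choice of square-root branch, since $|\sqrt{w}|^2=|w|$ always. So only the off-diagonal entries of $Q=NN^*$ depend on excluding negative real weights.
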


\textcolor{blue}{
In Section 2, we will find the rank of the complex signless incidence matrix $N$ and establish some properties of $N^+$. Section 3 will provide the combinatorial formulas of the Moore--Penrose inverse of the complex signless incidence matrix for trees; even unicyclic graphs where the weight of the cycle is not a positive real number; and odd unicyclic graphs where the weight of the cycle is not a negative real number. In Section 4, we will study the volume of $N$ for all cases of $G$ and provide the combinatorial formulas for the Moore--Penrose inverse of the complex signless incidence matrix of an arbitrary weakly connected oriented graph. Section 5 will conclude with a study of the determinant of $Q$ and provide the combinatorial formulas for the Moore--Penrose inverse of the complex signless Laplacian matrix of a weakly connected oriented graph.
}

\section{Finding Rank of N}

\begin{theorem} \label{treerank}
    \textcolor{blue}{If $G$ is a oriented tree on $n \geq 2$ vertices with non-zero edge weights in $\C \setminus \R^-$ and complex signless incidence matrix $N$, then $\rank(N) = n-1$}. 
\end{theorem}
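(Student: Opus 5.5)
Since $G$ is a tree on $n$ vertices, it has $m=n-1$ edges, so $N$ is an $n\times(n-1)$ matrix and $\rank(N)\le n-1$. It therefore suffices to exhibit an $(n-1)\times(n-1)$ submatrix of $N$ with nonzero determinant. We will use induction on $n$ via a leaf deletion, and show that deleting the row of any fixed vertex leaves a nonsingular square matrix. Alternatively, we can show directly that the columns of $N$ are linearly independent. The column argument avoids determinants entirely, so we plan to use it.

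Base case $n=2$. There is one edge $(u,v)$, and its column is $(\sqrt{w_{uv}},\sqrt{\overline{w_{uv}}})^T$. This column is nonzero because $w_{uv}\neq 0$, so the rank is $1=n-1$.

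Inductive step, $n\ge 3$. Choose a leaf $\ell$ of $G_u$, and let $e_j$ be its unique incident edge. Order the rows so that $\ell$ comes last and the columns so that $e_j$ comes last. Then
\[
N=\begin{pmatrix} N' & x\\ 0 & \alpha\end{pmatrix},
\]
where $N'$ is the complex signless incidence matrix of the tree $G-\ell$. Here $\alpha$ is either $\sqrt{w_{uv}}$ or $\sqrt{\overline{w_{uv}}}$, according to the orientation of $e_j$. In either case $\alpha\neq 0$, since the principal square root of a nonzero number is nonzero. Suppose $Nc=0$. The last row gives $\alpha c_j=0$, so $c_j=0$. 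The remaining rows then give $N'c'=0$. By the induction hypothesis, $N'$ has full column rank $n-2$, so $c'=0$. Hence the columns of $N$ are independent and $\rank(N)=n-1$.

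None of the steps is a real obstacle. The only care needed is to check that deleting a leaf leaves a weakly connected oriented tree whose weights still lie in $\C\setminus\R^-$, so that the induction hypothesis applies. The restriction on the weights from Observation~\ref{negreal} is not actually needed here; only the nonvanishing of the square roots matters.
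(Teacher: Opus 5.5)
Your proof is correct and follows the same route as the paper: the row of a leaf has a single nonzero entry ($\sqrt{w_{uv}}$ or $\sqrt{\overline{w_{uv}}}$), which forces that edge's coefficient to vanish, and induction on the leaf-deleted tree then shows that all $n-1$ columns are independent. Your version is slightly tidier than the paper's because it states the base case $n=2$ explicitly. You are also right that the argument uses only the nonvanishing of the square roots, not the restriction $w_{uv}\notin\R^-$.
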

\begin{proof}
    \textcolor{blue}{Since G is a tree, there exists at least 2 leaves and $N$ has dimensions $n \times (n-1)$. Suppose, without loss of generality, that $e_j = (u,v) \in E(G)$ is a leaf. If we consider the equation $\sum_{k=1}^{n-1} c_k n_k = 0$ where $n_k$ corresponds to the kth column of $N$ and $c_k \in \C$, the entry in row $v$ will simply be $c_j\sqrt{\overline{w_{uv}}}=0$. This implies that $c_j=0$, and therefore, we can remove row $v$ and col $j$ from $N$ to obtain an $(n-1) \times (n-2)$ matrix which define as $N'$. Since taking away edge $j$ and vertex $v$ still leaves a tree, $N'$ is still the signless incidence matrix of a tree, and by induction, we conclude that $c_k=0  \text{ for all } k \in {1,2,...,n-1}$. Therefore, the columns of $N$ are linearly independent, and so $\rank(N) = n-1$}.
\end{proof}

\begin{lemma} [\cite{MP2026}, \textit{Lemma 2.3}] \label{barik}
    \textcolor{blue}{
    Let $G$ be a weakly connected oriented graph on $n$ vertices with complex edge weights. Then $wt(C)$ is a positive real number for each oriented cycle $C$ in $G$ if and only if $\frac{wt(\gamma_1)}{|wt(\gamma_1)|}=\frac{wt(\gamma_2)}{|wt(\gamma_2)|}$ for any two walks $\gamma_1$ and $\gamma_2$ from a fixed vertex to another fixed vertex in $G_u$. 
    }
\end{lemma}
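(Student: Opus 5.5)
The statement is a two-way equivalence, and I would prove each direction separately. Two elementary facts about the weight of a walk are used throughout. First, weights are multiplicative under concatenation: if $\gamma$ is $\gamma'$ followed by $\gamma''$, then $wt(\gamma)=wt(\gamma')wt(\gamma'')$, and so unit weights multiply too. Second, since $a_{ji}=\overline{a_{ij}}$, reversing a walk conjugates its weight: $wt(\gamma^{-1})=\overline{wt(\gamma)}$. In particular, the two orientations of a cycle have conjugate weights, so "positive real" does not depend on the orientation chosen.

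\emph{($\Leftarrow$).} Let $C=(i_1,i_2,\dots,i_k,i_1)$ with $k\ge 3$ be a cycle. Split it into two walks (paths) from $i_1$ to $i_2$: the single edge $\gamma_1=(i_1,i_2)$ and the long way around $\gamma_2=(i_1,i_k,i_{k-1},\dots,i_2)$. Then
\[
wt(C)=wt(\gamma_1)\,wt(\gamma_2^{-1})=wt(\gamma_1)\,\overline{wt(\gamma_2)}.
\]
By hypothesis both walks have the same unit weight $u$. Hence $wt(C)=|wt(\gamma_1)||wt(\gamma_2)|\,u\overline{u}=|wt(\gamma_1)||wt(\gamma_2)|>0$.

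\emph{($\Rightarrow$).} Fix two walks $\gamma_1\neq\gamma_2$ from $i$ to $j$; by the convention they repeat no vertices. I would induct on the length of the longest common initial segment of $\gamma_1$ and $\gamma_2$, measured backwards, i.e. on how far from $j$ they first diverge. The key steps are as follows.
\begin{enumerate}
\item Let $x$ be the last vertex of the common prefix. Let $y$ be the first vertex of $\gamma_2$ after $x$ that also lies on $\gamma_1$; such a $y$ exists since $j$ lies on both.
\item Let $P_1$ be the subwalk of $\gamma_1$ from $x$ to $y$ and $P_2$ the subwalk of $\gamma_2$ from $x$ to $y$. By the choice of $y$ these are internally vertex-disjoint, and they are distinct. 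So $P_1$ followed by $P_2^{-1}$ is a cycle $C$, with
\[
wt(C)=wt(P_1)\,\overline{wt(P_2)}>0.
\]
Hence $P_1$ and $P_2$ have the same unit weight. (If $P_1$ and $P_2$ were both single edges they would coincide, since $G_u$ is simple; so the cycle really has length at least $3$.)
\item Form $\gamma_1'$: follow $\gamma_2$ from $i$ to $y$, then $\gamma_1$ from $y$ to $j$. It is a genuine path, because the part of $\gamma_2$ before $y$ meets $\gamma_1$ only in the common prefix and at $y$. By multiplicativity and step 2, $\gamma_1'$ has the same unit weight as $\gamma_1$.
\item $\gamma_1'$ agrees with $\gamma_2$ up to $y$, strictly beyond $x$. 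By induction (the base case being $\gamma_1'=\gamma_2$), $\gamma_1'$ and $\gamma_2$ have equal unit weights, and therefore so do $\gamma_1$ and $\gamma_2$.
\end{enumerate}

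The only delicate point is step 3, checking that the spliced walk is still vertex-simple so that the induction stays inside the class of walks under consideration. This is exactly why $y$ is chosen as the \emph{first} return of $\gamma_2$ to $\gamma_1$ after the divergence point, rather than, say, the first common vertex along $\gamma_1$.
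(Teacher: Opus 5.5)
Your proof is correct. The paper does not prove this lemma itself; it quotes it from \cite{MP2026}. So there is no in-paper argument to match, and what you give is a self-contained proof.

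Your $(\Leftarrow)$ direction, splitting a cycle into one edge and the complementary path, is the natural one. For $(\Rightarrow)$, the closest argument in the paper is the one used in Lemma~\ref{shortestwalk} and Lemma~\ref{oddwelldef}. That argument compares two walks in one shot: it asserts that the unshared edges form cycles $C_i$ and that $wt(\gamma_1)/wt(\gamma_2)=\prod_i wt(\gamma_1[C_i])/wt(\gamma_2[C_i])$. This is adequate where the paper uses it (shortest walks, unicyclic graphs), because there common vertices are met in the same order and shared edges are traversed in the same direction. It would not work for arbitrary paths. For $\gamma_1=(i,a,b,j)$ and $\gamma_2=(i,b,a,j)$, the shared edge $\{a,b\}$ is traversed in opposite directions and contributes the factor $a_{ab}/\overline{a_{ab}}$. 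Moreover, $wt(\gamma_1)\overline{wt(\gamma_2)}$ is the product of the weights of the triangles $(i,a,b)$ and $(a,b,j)$, not the weight of the $4$-cycle formed by the unshared edges. Your first-return splicing handles exactly this. Each step extracts one genuine cycle from two internally disjoint $x$--$y$ subpaths while keeping the spliced walk vertex-simple. On this example it produces precisely those two triangles.

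A few points you use tacitly deserve a sentence each:
\begin{itemize}
\item $x\neq j$, because two vertex-simple walks whose common prefix reaches $j$ must coincide.
\item $y$ lies on $\gamma_1$ strictly after $x$, since $\gamma_2$ cannot revisit the common prefix. This is what makes $P_1$ a forward subwalk of $\gamma_1$.
\item The induction is cleanest stated on the length of $\gamma_2$ minus the length of the common prefix, with $\gamma_2$ held fixed.
\end{itemize}

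Finally, your argument relies on the paper's convention that walks repeat no vertices. If walks in the unrestricted sense are intended, add one reduction. Excising a minimal closed subwalk, which is either a cycle or an edge traversed back and forth (weight $|a_{uv}|^2>0$), does not change the unit weight. Hence every walk has the same unit weight as some path.
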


\begin{lemma}\label{shortestwalk}
    \textcolor{blue}{Let $G$ be a weakly connected oriented graph on $n \geq 2$ vertices, $m \geq 1$ edges, and non-zero edge weights in $\C \setminus \R^-$ such that $wt(C) \in \R^+$ for all even cycles. Let $u,v \in V(G)$. If $\gamma_{1}$ and $\gamma_{2}$ are both the shortest walk from u to v, then $\frac{wt(\gamma_{1})}{|wt(\gamma_{1})|} = \frac{wt(\gamma_{2})}{|wt(\gamma_{2})|}$}.
\end{lemma}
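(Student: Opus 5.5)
Let $d$ denote the common length of $\gamma_1$ and $\gamma_2$ (both are shortest $u$--$v$ walks, hence paths of length $d=\mathrm{dist}(u,v)$). The idea is to split the closed walk $\gamma_1\gamma_2^{-1}$ into cycles of even length and apply the hypothesis $wt(C)\in\R^+$ to each of them. A preliminary fact is needed first: reversing a walk conjugates its weight, because $a_{ji}=\overline{a_{ij}}$. Hence $wt(\gamma_2^{-1})=\overline{wt(\gamma_2)}$, and
\[
\frac{wt(\gamma_1)}{|wt(\gamma_1)|}=\frac{wt(\gamma_2)}{|wt(\gamma_2)|}
\iff wt(\gamma_1)\,\overline{wt(\gamma_2)}\in\R^+ .
\]
So the target is to show that the weight of the closed walk $\gamma_1$ followed by $\gamma_2^{-1}$ is a positive real.

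I would argue by strong induction on $d$; the cases $d=0,1$ are trivial because the shortest walk is unique. Write $\gamma_1=(u=x_0,\dots,x_d=v)$ and $\gamma_2=(u=y_0,\dots,y_d=v)$. Since both are shortest paths, every $x_i$ and $y_i$ lies at distance exactly $i$ from $u$. Hence any common vertex has the form $x_i=y_i$, with the same index in both paths.

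\textbf{Case 1: there is a common vertex $x_i=y_i$ with $0<i<d$.} Split both walks at this vertex. Each of the two pieces $u\to x_i$ and $x_i\to v$ is a pair of shortest walks of smaller length, so by induction their unit weights agree. Unit weights are multiplicative under concatenation, so the unit weights of $\gamma_1$ and $\gamma_2$ agree.

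\textbf{Case 2: the only common vertices are $u$ and $v$.} Then $\gamma_1\gamma_2^{-1}$ is a cycle $C$ of length $2d$, which is even. By hypothesis $wt(C)\in\R^+$, and $wt(C)=wt(\gamma_1)\overline{wt(\gamma_2)}$. That is exactly the target condition from the first paragraph.

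The main obstacle is verifying that Case 2 really produces a cycle, and that the hypothesis applies to it regardless of its orientation. Since the paths share only their endpoints and $d\ge 2$, the closed walk visits $2d\ge4$ distinct vertices, so it is a genuine cycle. If the hypothesis is stated for one direction of traversal, the other direction has weight $\overline{wt(C)}$, which is also positive real; so either orientation is covered. The restriction of the weights to $\C\setminus\R^-$ plays no role in this lemma. It matters only later, in the identity $\sqrt{a}\sqrt{\overline a}=|a|$.
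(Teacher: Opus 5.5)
Your proof is correct and takes essentially the paper's approach. The paper likewise splits the two shortest walks into shared edges and cycles formed by the unshared segments, shows each such cycle is even, and multiplies the ratios $wt(\gamma_1[C_i])/wt(\gamma_2[C_i])$, each of which is a positive real by the even-cycle hypothesis. Your induction on $d$ organizes that same decomposition more cleanly. Your observation that common vertices occur at the same index in both paths also makes rigorous two things the paper handles loosely: the evenness of each cycle, which the paper obtains through a shortening argument, and the fact that shared edges are traversed in the same direction, which the paper uses implicitly.
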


\begin{proof}
    \color{blue}{Let $\gamma_{1}$ and $\gamma_{2}$ both be shortest walks from $u$ to $v$. Then, the edges not shared by the two walks form oriented cycles $C_i$, where $\gamma_1[C_i]$ and $\gamma_2[C_i]$ are the set of vertices and undirected edges of $C_i$ traced out by $\gamma_1$ and $\gamma_2$, respectively. All $C_i$ must be cycles of even length for if not, suppose they formed at least 1 odd cycle $C_o$ where $len(\gamma_1[C_o])>$ $len(\gamma_2[C_o]$) ($len$ denotes the length of the walk). Then we can take $\gamma_3$ to be a walk from $u$ to $v$ that follows $\gamma_1$ for all edges not in $C_o$ but takes the walk given by $\gamma_2[C_o]$ in $C_o$. Then $\gamma_{1}$ is not the shortest walk.  
    Then we have:
    $$\frac{wt(\gamma_{1})}{wt(\gamma_{2})} = \prod_i\frac{wt(\gamma_1[C_i])}{wt(\gamma_2[C_i])} = \prod_i \left( \frac{wt(\gamma_1[C_i])}{wt(\gamma_2[C_i])}\right) \frac{\overline{wt(\gamma_2[C_i]})}{\overline{wt(\gamma_2[C_i]})}
    = \prod_i\frac{wt[C_i]}{|wt(\gamma_2[C_i])|^2} \in \R^+
    $$
    Therefore, we have $\frac{wt(\gamma_{1})}{|wt(\gamma_{1})|} = \frac{wt(\gamma_{2})}{|wt(\gamma_{2})|}$.
    }
\end{proof}

A walk,$\gamma$, in $G_u$ is a sequence of vertices and edges where the set of edges traversed in $\gamma$ is denoted $E(\gamma)$ and the set of vertices is denoted $V(\gamma)$.
\begin{lemma}\label{dis}
    \textcolor{blue}{Let $G$ be a simple, connected  graph on $n \geq 2$ vertices and $m \geq 1$ edges where $(u,v) \in E(G)$ and $i \in V(G)$. If $d(i,u) = d(i,v)$ and $\gamma_{i \rightarrow u}$ is a shortest walk from vertex $i$ to vertex $u$, then there exists a shortest walk from vertex $i$ to vertex $v$, denoted as $\gamma'_{i \rightarrow v}$, such that $\gamma'_{i \rightarrow v} \cup \gamma_{i \rightarrow u} \cup 
    \{u,v\}$ forms an odd unicyclic graph in $G$.} 
\end{lemma}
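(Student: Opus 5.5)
Let $k = d(i,u) = d(i,v)$ and write the given shortest walk as $\gamma_{i\to u} = (i = x_0, x_1, \dots, x_k = u)$. The plan is to build $\gamma'_{i\to v}$ so that it agrees with $\gamma_{i\to u}$ on an initial segment and then separates from it permanently. Fix any shortest walk $\delta = (i = y_0, y_1, \dots, y_k = v)$ from $i$ to $v$. Because both walks are geodesics from $i$, we have $d(i,x_s) = s = d(i,y_s)$. Hence a vertex $x_s$ can coincide only with $y_s$, the vertex of $\delta$ at the same level.

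Let $t$ be the largest index with $x_t = y_t$. Such an index exists because $x_0 = y_0 = i$, and $t < k$ because $u \neq v$. Now define
\[
\gamma'_{i\to v} = (x_0, \dots, x_t, y_{t+1}, \dots, y_k).
\]
This is a walk, since $x_t = y_t$ is adjacent to $y_{t+1}$. It has length $k$, so it is a shortest walk from $i$ to $v$. It has no repeated vertices because levels strictly increase along it.

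Next we check that the pieces meet only where they should.
\begin{itemize}
\item By maximality of $t$ and the level observation, $\{x_{t+1}, \dots, x_k\} \cap \{y_{t+1}, \dots, y_k\} = \emptyset$.
\item The walks $\gamma_{i\to u}$ and $\gamma'_{i\to v}$ share exactly the path $x_0 \cdots x_t$.
\end{itemize}
So the union of $\gamma_{i\to u}$, $\gamma'_{i\to v}$ and the edge $\{u,v\}$ consists of two parts. The first is the cycle $C = (x_t, x_{t+1}, \dots, x_k = u, v = y_k, y_{k-1}, \dots, y_{t+1}, x_t)$. The second is the path $x_0 \cdots x_t$, which is attached to $C$ only at $x_t$. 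This union is a connected subgraph of $G$ with exactly one cycle, so it is unicyclic.

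It remains to count the length of $C$. It has $(k-t) + (k-t) + 1 = 2(k-t) + 1$ edges, which is odd. This yields the odd unicyclic graph claimed in the statement.

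The main point of care is showing that the cycle is genuinely a cycle, with no repeated vertices and no chords coming from the union. Choosing $t$ as the last common vertex together with the level argument is what guarantees this. The union involves only the edges of the two walks and $\{u,v\}$, so no other edges of $G$ enter. The edge $\{u,v\}$ is not already an edge of either walk, since $d(i,u) = d(i,v)$ forbids $u$ and $v$ from being consecutive on a geodesic from $i$. This also confirms that the cycle length is at least $3$.
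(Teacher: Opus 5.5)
Your proof is correct and takes essentially the paper's route. You splice the given geodesic to $u$ with a geodesic to $v$ at their last common vertex, which is the paper's last branch vertex $w_z$ of $S=\gamma_{i\to u}\cup\gamma_{i\to v}$; the two tails then have equal length $k-t$ and are disjoint, giving a cycle of length $2(k-t)+1$.

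Your level argument ($x_s$ can coincide only with $y_s$) makes the disjointness of the tails rigorous. Choosing the largest common index $t$ also covers the case $t=0$, where the walks meet only at $i$. In that case the paper's claim $z\ge 1$ is false (e.g.\ in a triangle), though the conclusion still holds there with $\gamma'_{i\to v}=\gamma_{i\to v}$.
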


\begin{proof}
    \textcolor{blue}{Let $d(i,u) = d(i,v)$, $\gamma_{i \rightarrow u}$ be a shortest walk from vertex $i$ to vertex $u$, and $\gamma_{i \rightarrow v}$ be any shortest walk from vertex $i$ to vertex $v$. 
    In the undirected graph formed by $S = \gamma_{i \rightarrow u} \cup \gamma_{i \rightarrow v}$,  let $w_1,w_2,..,w_z$ be all the branch vertices in $S$. A branch vertex in $S$ is defined as any vertex with degree strictly greater than 2.  
    Note that $z \geq 1$ because if $z = 0$, that implies that $\gamma_{i \rightarrow u}$ and $ \gamma_{i \rightarrow v}$ never diverge, and thus $u = v$ which contradicts the assumption that $G$ is a weakly connected graph. 
    \\
    \indent Also, since $\gamma_{i \rightarrow u}$ and $ \gamma_{i \rightarrow v}$ are both shortest walks, any subwalk of these two walks must themselves be a shortest walk. So, if we let $\gamma_{i \rightarrow u}(w_p \rightarrow w_q)$ be the subwalk of $\gamma_{i \rightarrow u}$ from vertex $w_p$ to vertex $w_q$, then we have:
    $$len[\gamma_{i \rightarrow u}(1 \rightarrow w_{z})] =  len[\gamma_{i \rightarrow v}(1 \rightarrow w_{z})]$$
    \indent Thus, if we take $\gamma'_{i \rightarrow v}$ to be the walk that takes the subwalk $\gamma_{i \rightarrow u}(1 \rightarrow w_{z})$ and then the subwalk $\gamma_{i \rightarrow v}(w_z \rightarrow v)$, then $\gamma'_{i \rightarrow v}$ will be a shortest walk from $1$ to $v$. 
    \\
    \indent By construction, $\gamma'_{i \rightarrow v}$ and $\gamma_{i \rightarrow u}$ only have $w_z$ as a branch vertex so $E(\gamma'_{i \rightarrow v}(w_z \rightarrow v)) \cap E(\gamma_{i \rightarrow u}(w_z \rightarrow u)) = \emptyset$. Since $d(i,u) = d(i,v)$, it follows that $len[\gamma'_{i \rightarrow v}(w_z \rightarrow v)] = len[\gamma_{i \rightarrow u}(w_z \rightarrow u)] \geq 1$. Thus, $\gamma'_{i \rightarrow v}(w_z \rightarrow v) \cup \gamma_{i \rightarrow u}(w_z \rightarrow u) \cup \{u,v\}$ forms an odd cycle.
    \\
    \indent Therefore, $\gamma'_{i \rightarrow v} \cup \gamma_{i \rightarrow u} \cup \{u,v\} = \gamma_{i \rightarrow u}(1 \rightarrow w_{z}) \cup \gamma'_{i \rightarrow v}(w_z \rightarrow v) \cup \gamma_{i \rightarrow u}(w_z \rightarrow u) \cup \{u,v\}$ forms an odd unicyclic graph in $G_u$ as $\gamma_{i \rightarrow u}(1 \rightarrow w_{z})$ contains no cycle. If $\gamma_{i \rightarrow u}(1 \rightarrow w_{z})$ did contain at least 1 cycle, then it wouldn't be a shortest walk because you can choose the walk from $1$ to $w_z$ that doesn't enter any of the cycles which would be an even shorter walk. }
\end{proof}

\begin{example}{
\begin{figure}[htbp]
\centering
\begin{tikzpicture}[
    node distance=2cm and 3cm,
    vertex/.style={
        circle, 
        draw=black, 
        fill=pink!40, 
        minimum size=5mm, 
        inner sep=0pt,
        font=\small\bfseries
    },
    edge/.style={
        ->, 
        >={Stealth[scale=1.2]}, 
        thick, 
        shorten >=1pt
    },
    weight/.style={
        font=\small\color{blue}, 
        fill=white, 
        inner sep=2pt
    }
]

    \node[vertex] (1) at (8,0) {1};
    \node[vertex] (2) at (8,4) {2};
    
    \node[vertex] (4) at (4,0) {4};
    \node[vertex] (3) at (4,4) {3};
    
    \node[vertex] (5) at (1,1.5) {5};
    \node[vertex] (6) at (0,5.5) {6};

    \draw[->] (2) -- node[weight] {1 : $e_2$} (1);
    \draw[->] (1) -- node[weight] {1 : $e_1$} (4);
    \draw[->] (4) -- node[weight] {1 : $e_5$} (3);
    \draw[->] (3) -- node[weight] {1 : $e_3$} (2);
    
    \draw[->] (6) -- node[weight] {1 : $e_8$} (3);
    \draw[->] (3) -- node[weight] {i : $e_4$} (5);
    \draw[->] (5) -- node[weight] {i : $e_7$} (6);
    \draw[->] (5) -- node[weight] {i : $e_6$} (4);

    \end{tikzpicture}
    \caption{\color{blue}{Example of a weakly connected oriented graph such that all three even cycles have positive real weights and all three odd cycles have negative real weights.}}
    \end{figure}   \color{blue}{ For $G$ shown in Figure 1, it's complex signless incidence matrix $N$, is shown below. 
    $$N =
    \begin{bmatrix}
        1 & 1 & 0 & 0 & 0 & 0 & 0 & 0 \\
        0 & 1 & 1 & 0 & 0 & 0 & 0 & 0 \\
        0 & 0 & 1 & \sqrt{\i} & 1 & 0 & 0 & 1 \\
        1 & 0 & 0 & 0 & 1 & \sqrt{-\i} & 0 & 0 \\
        0 & 0 & 0 & \sqrt{-\i} & 0 & \sqrt{\i} & \sqrt{\i} & 0 \\
        0 & 0 & 0 & 0 & 0 & 0 & \sqrt{-\i} & 1
    \end{bmatrix}
    $$
    \\ \indent Using Sage Math, one can confirm that $\rank(N) = 5$, in accordance with Theorem \ref{thm:rank}.}}
\end{example}

\begin{theorem}
\label{thm:rank}
    \textcolor{blue}{Let $G$ be a weakly connected oriented graph on $n \ge 2$ vertices, $m \geq 1$ edges, and non-zero edge weights in $\C \setminus \R^-$ with complex signless incidence matrix $N$. Then, $\rank(N) = n-1$ if and only if $G$ is either a tree, or $wt(C) \in \R^+$ for all even cycles and $wt(C) \in \R^-$ for all odd cycles.}
\end{theorem}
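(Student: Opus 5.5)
Since $N$ has $n$ rows and every column has two non-zero entries, $\rank(N)\le n$. The plan is to characterize $\rank(N)=n-1$ through the left null space of $N$: a row vector $x\in\C^n$ satisfies $xN=0$ if and only if for every edge $e_j=(u,v)$ we have $x_u\sqrt{w_{uv}}+x_v\sqrt{\overline{w_{uv}}}=0$. By Observation \ref{negreal}, $\sqrt{\overline{w_{uv}}}=\overline{\sqrt{w_{uv}}}$, so this says $x_v=-x_u\sqrt{w_{uv}}/\overline{\sqrt{w_{uv}}}=-x_u\,\frac{w_{uv}}{|w_{uv}|}$. Traversing the edge in the other direction gives the analogous relation with $\overline{w_{uv}}$, that is, with the adjacency entry $a_{vu}$. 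Hence along any walk $\gamma$ from $i$ to $j$ of length $\ell$ we get
\[
x_j=(-1)^{\ell}\,x_i\,\frac{wt(\gamma)}{|wt(\gamma)|}.
\]
(One must be careful about conjugation conventions; if the recursion instead produces the conjugate unit weight, the argument is unchanged.) Since $G$ is connected, any left null vector is determined by its value at one vertex $i$. Therefore the left null space has dimension at most $1$, and $\rank(N)\ge n-1$ always. So $\rank(N)=n-1$ exactly when a nonzero consistent $x$ exists.

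For \emph{sufficiency}, if $G$ is a tree, this is Theorem \ref{treerank}. Otherwise, fix a vertex $i$, set $x_i=1$, and define $x_j=(-1)^{\ell(\gamma)}wt(\gamma)/|wt(\gamma)|$ for some walk $\gamma$ from $i$ to $j$. Well-definedness reduces to a statement about pairs of walks $\gamma_1,\gamma_2$ from $i$ to $j$. Their symmetric difference decomposes into cycles, so the ratio of the two signed unit weights is a product over cycles $C$ of $(-1)^{|C|}\,wt(C)/|wt(C)|$. This mirrors the computation in Lemma \ref{shortestwalk}, where $wt(\gamma_1[C])/wt(\gamma_2[C])$ was rewritten as $wt(C)/|\cdot|^2$. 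Under the hypothesis, each factor equals $1$: even cycles contribute $(+1)(+1)$ and odd cycles contribute $(-1)(-1)$. Alternatively, one could avoid walk surgery by defining $x$ on a spanning tree using Observation \ref{treeunitweight}. Then one checks each non-tree edge $(u,v)$: the fundamental cycle it closes has sign-adjusted unit weight $1$, which is exactly the edge equation at $(u,v)$. I would use this spanning-tree version, since it is cleaner and it is also what the necessity direction needs.

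For \emph{necessity}, suppose $G$ is not a tree and $\rank(N)=n-1$. Take the nonzero left null vector $x$; by the recursion it has no zero entries. For any cycle $C$, propagate around $C$ from a vertex back to itself. This gives $1=(-1)^{|C|}wt(C)/|wt(C)|$, so $wt(C)\in\R^+$ when $|C|$ is even and $wt(C)\in\R^-$ when $|C|$ is odd. I expect the main obstacle to be bookkeeping rather than substance. One point is that the walk orientation chosen for $wt(C)$ is irrelevant, because reversing a cycle conjugates its weight and preserves membership in $\R^\pm$. The other point is being careful that $\sqrt{w}/\sqrt{\overline w}=w/|w|$ holds precisely because of the restriction $w\notin\R^-$. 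Lemmas \ref{shortestwalk} and \ref{dis} would become relevant only if one insisted on defining $x$ via shortest walks, and the spanning-tree construction sidesteps them.
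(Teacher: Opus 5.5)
Your proof is correct. The necessity direction matches the paper's: the edge relation $x_v=-x_u\,w_{uv}/|w_{uv}|$, connectivity forcing a nonzero null vector to have no zero entries, and one trip around a cycle. Your sufficiency argument takes a genuinely different route.

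The paper builds the null vector intrinsically as $x_j=(-1)^{d(1,j)}wt(\gamma_{1\to j})/|wt(\gamma_{1\to j})|$, with $\gamma_{1\to j}$ a \emph{shortest} walk in $G$. This requires several ingredients:
Lemma \ref{barik} for the bipartite case;
Lemma \ref{shortestwalk} for well-definedness;
Lemma \ref{dis} to produce an odd cycle through an edge whose endpoints are equidistant from vertex $1$;
a case split on $d(1,u)$ versus $d(1,v)$.
It then obtains $\rank(N)\ge n-1$ separately, from a spanning-tree submatrix via Theorem \ref{treerank}.

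You instead get $\rank(N)\ge n-1$ for free, since a left null vector is determined by a single coordinate. You define $x$ along the paths of one spanning tree $T$, so the tree edges hold by construction. For a non-tree edge $(u,v)$, the equation is exactly the hypothesis on its fundamental cycle. The closed walk has length $d_T(i,u)+1+d_T(i,v)$, which has the parity of that cycle, and the doubly traversed tree segment contributes only $|\cdot|^2$.

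This route is shorter and avoids all three walk lemmas. It also shows the condition need only be checked on the $m-n+1$ fundamental cycles of one spanning tree. Moreover, once $x$ exists, your identity $x_j=(-1)^{\ell}x_i\,wt(\gamma)/|wt(\gamma)|$ holds for every walk $\gamma$ from $i$ to $j$. This gives path-independence of the \emph{signed} unit weight, so well-definedness of the signed equiweight matrix $S$ becomes a corollary rather than a prerequisite. The paper's route has the advantage of producing the null vector directly in the shortest-walk form used to define $S$, though your identity recovers that form at once.

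One caution about your first sketch, which writes the ratio of two walk weights as a product over the cycles of their edge symmetric difference. That is not right as stated, because two paths can traverse a shared edge in opposite directions. Such an edge drops out of the symmetric difference but still contributes a nontrivial factor to the ratio. The spanning-tree version you chose avoids this.
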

\begin{proof}
    \textcolor{blue}{Let $\rank(N) = n-1$. \\
    \textbf{Case 1:} $G$ is a tree \\
    We are done. \\
    \textbf{Case 2:} Let $C$ be a cycle of length $k$, consisting of vertices ${1,2, ..., k}$ with $(1, k) \in E(G)$. So, there exists a non-zero vector $x$ such that $N^Tx = 0$ where for all $e_j = (u,v) \in E(G)$, $x_u\sqrt{w_{uv}} + x_v\sqrt{\overline{w_{uv}}} = 0$. By Observation \ref{negreal}, since ${w_{uv}}$ is not a negative real number, this implies that $x_{v} = -x_{u}\frac{w_{uv}}{|{w_{uv}}|}$, or equivalently, $x_{v} = -x_{u} \frac{a_{uv}}{|{a_{uv}}|}$. Therefore, if $\gamma= (1,2, ..., k)$, we get: 
    $$x_{k} = (-1)^{k-1}x_1\prod_{i=1}^{k-1}\frac{a_{i,i+1}}{a_{i+1,i}} = (-1)^{k-1}x_1\frac{wt(\gamma)}{|wt(\gamma)|}$$ Also since $(1, k) \in E(G)$, we get 
    $$x_{k} = -x_1 \frac{a_{1,{k}}}{\left| a_{1,{k}} \right|}$$ 
    Equating these two expressions we conclude that 
    $$\left[-x_1 \frac{a_{1,{k}}}{\left|a_{1,{k}} \right|}\right] \frac{\overline{a_{1, {k}}}}{\left|{a_{1, {k}}} \right|}= \left[ -(-1)^kx_1\frac{wt(\gamma)}{|wt(\gamma)|} \right] \frac{\overline{a_{1, {k}}}}{\left|{a_{1, {k}}} \right|}$$ 
    \\Since G is connected, all $ x_m$ are non-zero, which implies:
    $$1 = (-1)^k\frac{{wt(C)}}{|{wt(C)|}}$$
    Thus if C is a cycle of odd length, then $k$ is odd. This means ${wt(C)} = -|{wt(C)}| \in \R^-$. \\
    Thus if C is a cycle of even length, then $k$ is even. This means ${wt(C)} = |{wt(C)}| \in \R^+$. \\ } 
    
    \textcolor{blue}{For the converse direction, we have two cases.
    \\
    \textbf{Case 1}: $G$ is an oriented tree on n vertices.
    \\
    \indent Then by Theorem \ref{treerank}, $\rank(N) = n-1$.
    \\
    \textbf{Case 2}: $G$ has cycles.
    \\
    \indent \textbf{Case 2a:} $G$ has only even cycles. \\
    \indent Then by Lemma \ref{barik}, we can define the vector $x \text{ such that } x_j = (-1)^{d(1,j)}\frac{wt(\gamma_{1\rightarrow j})}{|wt(\gamma_{1\rightarrow j})|}$ where $\gamma_{1\rightarrow j}$ is any walk from $1 \text{ to } j$ and $d(1,j)$ is the distance between vertex $1$ and $j$. Let $e_i = (u,v)$, then: 
        $$(N^Tx)_i = (-1)^{d(1,u)}\sqrt{w_{uv}}\frac{wt(\gamma_{1\rightarrow u})}{|wt(\gamma_{1\rightarrow u})|} +(-1)^{d(1,v)} \sqrt{\overline{w_{uv}}}\frac{wt(\gamma_{1\rightarrow v})}{|wt(\gamma_{1\rightarrow v})|}$$ 
        $$ = (-1)^{d(1,u)}\sqrt{\overline{w_{uv}}} (\frac{w_{uv}}{|{w_{uv}}|}\frac{wt(\gamma_{1\rightarrow u})}{|wt(\gamma_{1\rightarrow u})|} - \frac{wt(\gamma_{1\rightarrow v})}{|wt(\gamma_{1\rightarrow v})|} = 0$$ \\
        Where the second equality holds because $d(1,u) = d(1,v) \pm 1$ by Lemma \ref{dis} and $\frac{w_{uv}}{|{w_{uv}}|}\frac{wt(\gamma_{1\rightarrow u})}{|wt(\gamma_{1\rightarrow u})|}$ is the unit weight of a walk from $1$ to $v$. Therefore, $\rank(N) < n$ and because $G_u$ is a connected graph, there exists a spanning tree in the underlying graph, $G_u$, which implies that there exists a submatrix of N, call it $N'$,  with $\rank(N') =  n-1$. So, $\rank(N) = n-1$. 
    \\
    \indent \textbf{Case 2b:} $G$ has at least 1 odd cycle.
    \\
    \indent By Lemma \ref{shortestwalk}, I can define the vector $x \text{ such that } x_j = (-1)^{d(1,j)}\frac{wt(\gamma_{1\rightarrow j})}{|wt(\gamma_{1\rightarrow j})|}$ where $\gamma_{1\rightarrow j}$ is a shortest possible walk from $1 \text{ to } j$.
    \\
    \indent \color{blue}{\textbf{Case 2b.1}: $d(1,u) = d(1,v) = d$.
    \\
    \indent Then if $e_i = (u,v)$ have:
    $$(N^Tx)_i = (-1)^{d(1,u)}\sqrt{w_{uv}}\frac{wt(\gamma_{1\rightarrow u})}{|wt(\gamma_{1\rightarrow u})|} +(-1)^{d(1,v)} \sqrt{\overline{w_{uv}}}\frac{wt(\gamma_{1\rightarrow v})}{|wt(\gamma_{1\rightarrow v})|}$$ 
    $$ = (-1)^d\sqrt{\overline{w_{uv}}} (\frac{w_{uv}}{|{w_{uv}}|}\frac{wt(\gamma_{1\rightarrow u})}{|wt(\gamma_{1\rightarrow u})|}+ \frac{wt(\gamma_{1\rightarrow v})}{|wt(\gamma_{1\rightarrow v})|})$$ 
    \\
    By Lemma \ref{dis}, let $\gamma'_{1\rightarrow v}$ be the shortest walk that together with $\gamma_{1\rightarrow u}$ and $\{u,v\}$ forms an odd unicyclic graph with odd cycle $C'$ where $e_k$ are all edges shared by $\gamma_{1\rightarrow u}$ and $\gamma'_{1\rightarrow v}$, and $w_{e_k}$ is the weight of $e_k$:
    $$wt(C')\prod_{k}|w_{e_k}|^2 = wt(\gamma_{1\rightarrow u})w_{uv}\overline{wt(\gamma'_{1\rightarrow v})}$$
    Since $wt(C') \in \R^-$. Then we have:
    $$ -1 = \frac{wt(C')}{|wt(C')|} = \frac{wt(\gamma_{1\rightarrow u})w_{uv}}{|wt(\gamma_{1\rightarrow u})w_{uv}|}\frac{\overline{wt(\gamma'_{1\rightarrow v})}}{|\overline{wt(\gamma'_{1\rightarrow v})}|}$$
    We can multiply both sides by $\frac{{wt(\gamma'_{1\rightarrow v})}}{|{wt(\gamma'_{1\rightarrow v})}|}$ to obtain:
    $$-\frac{{wt(\gamma'_{1\rightarrow v})}}{|{wt(\gamma'_{1\rightarrow v})}|} = \frac{wt(\gamma_{1\rightarrow u})w_{uv}}{|wt(\gamma_{1\rightarrow u})w_{uv}|}$$
    By Lemma \ref{shortestwalk}, we have
    $$
    -\frac{{wt(\gamma'_{1\rightarrow v})}}{|{wt(\gamma'_{1\rightarrow v})}|} = -\frac{{wt(\gamma_{1\rightarrow v})}}{|{wt(\gamma_{1\rightarrow v})}|} = \frac{wt(\gamma_{1\rightarrow u})w_{uv}}{|wt(\gamma_{1\rightarrow u})w_{uv}|}$$
    Thus, we conclude: 
    $$(N^Tx)_i = (-1)^d\sqrt{\overline{w_{uv}}} (\frac{w_{uv}}{|{w_{uv}}|}\frac{wt(\gamma_{1\rightarrow u})}{|wt(\gamma_{1\rightarrow u})|}+ \frac{wt(\gamma_{1\rightarrow v})}{|wt(\gamma_{1\rightarrow v})|}) = 0$$
    }
    \indent \color{blue}{\textbf{Case 2b.2}: $d(1,u) = d(1,v) \pm 1$.}
    \\
    \indent \indent \textbf{Case 2b.2.1}: $d(1,u) = d(1,v) - 1$.
    \\
    \indent \indent Then we have:
    $$(N^Tx)_i = (-1)^{d(1,u)}\sqrt{w_{uv}}\frac{wt(\gamma_{1\rightarrow u})}{|wt(\gamma_{1\rightarrow u})|} +(-1)^{d(1,v)} \sqrt{\overline{w_{uv}}}\frac{wt(\gamma_{1\rightarrow v})}{|wt(\gamma_{1\rightarrow v})|}$$ 
    $$ = (-1)^{d(1,u)}\sqrt{\overline{w_{uv}}} (\frac{w_{uv}}{|{w_{uv}}|}\frac{wt(\gamma_{1\rightarrow u})}{|wt(\gamma_{1\rightarrow u})|} - \frac{wt(\gamma_{1\rightarrow v})}{|wt(\gamma_{1\rightarrow v})|})$$
    By our assumption, $w_{uv}wt(\gamma_{1 \rightarrow u})$ is a shortest walk from $1$ to $v$. Thus by Lemma \ref{shortestwalk}, we have:
    $$(N^Tx)_i = (-1)^{d(1,u)}\sqrt{\overline{w_{uv}}} (\frac{w_{uv}}{|{w_{uv}}|}\frac{wt(\gamma_{1\rightarrow u})}{|wt(\gamma_{1\rightarrow u})|} - \frac{wt(\gamma_{1\rightarrow v})}{|wt(\gamma_{1\rightarrow v})|})= 0$$
    \indent \indent \textbf{Case 2b.2.2}: $d(1,u) = d(1,v) + 1$.
    \\
    \indent \indent Then we have:
    $$(N^Tx)_i = (-1)^{d(1,u)}\sqrt{w_{uv}}\frac{wt(\gamma_{1\rightarrow u})}{|wt(\gamma_{1\rightarrow u})|} +(-1)^{d(1,v)} \sqrt{\overline{w_{uv}}}\frac{wt(\gamma_{1\rightarrow v})}{|wt(\gamma_{1\rightarrow v})|}$$ 
    $$ = (-1)^{d(1,u)}\sqrt{{w_{uv}}}(\frac{wt(\gamma_{1\rightarrow u})}{|wt(\gamma_{1\rightarrow u})|} - \frac{wt(\gamma_{1\rightarrow v})}{|wt(\gamma_{1\rightarrow v})|}\frac{\overline{w_{uv}}}{|{\overline{w_{uv}}|}})$$
    By our assumption, $\overline{w_{uv}}wt(\gamma_{1 \rightarrow v})$ is a shortest walk from $1$ to $u$. Thus by Lemma \ref{shortestwalk}, we have:
    $$(N^Tx)_i = (-1)^{d(1,u)}\sqrt{{w_{uv}}}(\frac{wt(\gamma_{1\rightarrow u})}{|wt(\gamma_{1\rightarrow u})|} - \frac{wt(\gamma_{1\rightarrow v})}{|wt(\gamma_{1\rightarrow v})|}\frac{\overline{w_{uv}}}{|{\overline{w_{uv}}|}})= 0$$
    \\}
\end{proof}

For a weakly connected oriented graph $G$ on $n \geq 2$ vertices with complex edge weights, when $G$ is an oriented tree or all of its even oriented cycles have positive weights and all of its odd oriented cycles have negative weights, we can create a new $n \times n$ matrix $S = [s_{ij}]$ called the signed equiweight matrix of $G$, defined as follows:

\[
s_{ij} = (-1)^{d(i,j)} \frac{wt(\gamma_{i-j})}{|wt(\gamma_{i-j})|}, 
\]
with $d(i,j)$ denotes the shortest walk between vertex $i$ and vertex $j$.

This definition makes sense because of Theorem~\ref{thm:rank}. In addition, by Lemma~\ref{shortestwalk}, any walk $\gamma_{i-j}$ from a fixed vertex $i$ to another fixed vertex $j$ in $G$ has the same unit weight. Hence, our new signed equiweight matrix $S$ is well-defined. 

Note that $s_{ii} = (-1)^{d(i,i)}\frac{wt(\gamma_{i-i})}{|wt(\gamma_{i-i})|} = 1$.

\begin{theorem}
\label{thm:NN^+}
    Let $G$ be a weakly connected oriented graph on $n \geq 2$ vertices with complex edge weights and signless complex incidence matrix N. 

    (a) If $G$ has at least one even oriented cycle C such that $wt(C) \in \mathbb{C}\setminus\mathbb{R^+}$ or at least one odd oriented cycle C such that $wt(C) \in \mathbb{C}\setminus\mathbb{R^-}$, then 

    \[
    NN^+ = I_n.
    \]

    (b) If $G$ is an oriented tree or all of its even oriented cycles have positive weights and all of its odd oriented cycles have negative weights, then 

    \[NN^+ = I_n  - \frac{1}{n}S,\]

    where S is the signed equiweight matrix of G. 
\end{theorem}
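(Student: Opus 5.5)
The plan rests on the standard fact that $NN^+$ is the orthogonal projection of $\C^n$ onto the column space of $N$. Equivalently, $I_n - NN^+$ is the orthogonal projection onto $\ker(N^*)$, which is the orthogonal complement of the column space. So it suffices to determine $\ker(N^*)$, and its dimension is $n-\rank(N)$. Since edge weights lie in $\C\setminus\R^-$, I note that $N^*$ has entries $\overline{\sqrt{w_{uv}}}=\sqrt{\overline{w_{uv}}}$ and $\sqrt{w_{uv}}$. Hence $N^*x=0$ is equivalent to $N^T\bar{x}=0$, and the kernel computations in the proof of Theorem \ref{thm:rank} transfer after conjugation.

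\textbf{Part (a).} Under its hypothesis, $G$ is not a tree, and it is not the case that all even cycles have weight in $\R^+$ and all odd cycles have weight in $\R^-$. By Theorem \ref{thm:rank}, $\rank(N)\neq n-1$. Since $G$ is connected, a spanning tree gives a submatrix of rank $n-1$ by Theorem \ref{treerank}, so $\rank(N)\ge n-1$ and therefore $\rank(N)=n$. Then $N$ has full row rank, so $NN^+$ is the projection onto all of $\C^n$, that is, $NN^+=I_n$. (Alternatively, $N^+=N^*(NN^*)^{-1}$ gives this directly.)

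\textbf{Part (b).} Here Theorem \ref{thm:rank} gives $\rank(N)=n-1$, so $\ker(N^*)$ is one-dimensional. The proof of Theorem \ref{thm:rank} (Cases 1, 2a, 2b) exhibits a nonzero vector $x$ with $N^Tx=0$, where
\[
x_j=(-1)^{d(1,j)}\frac{wt(\gamma_{1\rightarrow j})}{|wt(\gamma_{1\rightarrow j})|}.
\]
For a tree, Observation \ref{treeunitweight} together with the Case 2a computation shows this works. Then $y=\bar{x}$ spans $\ker(N^*)$. Every $|y_j|=1$, so $\|y\|^2=n$, and the projection onto $\ker(N^*)$ is $\frac{1}{n}yy^*$. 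This gives
\[
NN^+=I_n-\frac{1}{n}yy^*, \qquad (yy^*)_{ij}=\bar{x}_i x_j .
\]
It remains to identify $\bar{x}_ix_j$ with $s_{ij}$. By definition,
\[
\bar{x}_i x_j=(-1)^{d(1,i)+d(1,j)}\,\overline{u(\gamma_{1\rightarrow i})}\,u(\gamma_{1\rightarrow j}),
\]
where $u$ denotes unit weight. Reversing a walk conjugates its weight, because $a_{ji}=\overline{a_{ij}}$. So $\overline{u(\gamma_{1\rightarrow i})}\,u(\gamma_{1\rightarrow j})$ is the unit weight of the concatenated walk from $i$ through $1$ to $j$.

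\textbf{Main obstacle.} The hard step is showing that this quantity, together with the sign $(-1)^{d(1,i)+d(1,j)}$, equals $(-1)^{d(i,j)}$ times the unit weight of a shortest $i$–$j$ walk. I would avoid path-by-path bookkeeping and argue via uniqueness instead. The vector $z^{(i)}$ with entries $z^{(i)}_j=(-1)^{d(i,j)}u(\gamma_{i\rightarrow j})$ is, by the same argument as in Theorem \ref{thm:rank} with base vertex $i$ in place of $1$, also a nonzero element of the one-dimensional space $\ker(N^T)$. Hence $z^{(i)}=c\,x$ for some scalar $c$. Evaluating at $j=i$ gives $1=c\,x_i$, so $c=1/x_i=\bar{x}_i$ because $|x_i|=1$. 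Therefore $s_{ij}=z^{(i)}_j=\bar{x}_ix_j$, and $yy^*=S$, which completes (b). Along the way I must check that the conjugation conventions make $N^T$ the right operator (rather than $N^*$) for the vectors $x$ and $z^{(i)}$, consistent with the proof of Theorem \ref{thm:rank}.
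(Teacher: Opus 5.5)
Your proof is correct, and it differs from the paper's mainly in how the normalization $\frac1n$ is obtained.

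The paper uses $(I_n-NN^+)N=0$ to put each row of $I_n-NN^+$ in the one-dimensional left null space. It then writes $t_{ij}=c_i s_{ij}$, which tacitly uses that the $i$-th row of $S$ (your $z^{(i)}$) lies in $\ker(N^T)$. Finally it pins down the scalars $c_i$ from the remaining Penrose/projection properties: it records $|t_{ij}|=|c_i|$ and $t_{ii}=c_i$ and defers the rest to Theorem 2.5 of \cite{MP2026}.

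You instead use that $I_n-NN^+$ is the orthogonal projector onto $\ker(N^*)$. The factor $\frac1n$ then comes for free from $|y_j|=1$, and the remaining work becomes the structural identity $S=\bar{x}x^T$. Your uniqueness argument settles this cleanly by writing $z^{(i)}=c\,x$ and normalizing at $z^{(i)}_i=1$.

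What each route buys:
\begin{itemize}
\item Yours is self-contained, and shows as a by-product that $S$ is Hermitian of rank one with $S^2=nS$.
\item The paper's reuses the template of \cite{MP2026} and never writes down this factorization of $S$.
\end{itemize}

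Both rest on the same input: the construction in the proof (not just the statement) of Theorem \ref{thm:rank} works from every base vertex. For non-bipartite $G$, this requires $\gamma_{i\rightarrow j}$ to be a shortest walk, as in Case 2b; that is also what makes $S$ well defined. Your spanning-tree remark in (a), giving $\rank(N)\ge n-1$ and hence $\rank(N)=n$, also supplies a step the paper's proof of (a) leaves implicit.
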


\begin{proof}
    Let $N^+$ denote the Moore--Penrose inverse of the incidence matrix $N$. Given the property $NN^+N=N$ we have $(I_n - NN^+)N = I_nN-NN^+N=N-N=0.$ Then each row of $I_n - NN^+$ is in the left null space of $N$. 
    
    (a) Suppose G contains at least one even oriented cycle C such that $wt(C) \in \mathbb{C}\setminus\mathbb{R^+}$ or at least one odd oriented cycle C such that $wt(C) \in \mathbb{C}\setminus\mathbb{R^-}$. By Theorem~\ref{thm:rank}, $\rank(N) = n$ and thus the left null space of $N$ is trivial. Consequently, this implies $NN^+=I_n$. 

    (b) Suppose $G$ is an oriented tree or $G$ contains oriented cycles and $wt(C) \in \mathbb{R}^+$ for all even cycles and $wt(C) \in \mathbb{R}^-$ for all odd cycles. In addition, let $S$ be the signed equiweight matrix of $G$. By Theorem~\ref{thm:rank}, $\rank(N)=n-1$ and thus the left null space of $N$ has dimension 1. 

    Let $I_n - NN^+ =: [t_{ij}]$. Since 
    \[
    |t_{ij}| = |(-1)^{d(i,j)}c_i \frac{wt(\gamma_{i-j})}{|wt(\gamma_{i-j})|}| = |c_i| \frac{|wt(\gamma_{i-j})|}{|wt(\gamma_{i-j})|} = |c_i| 
    \]

    and $t_{ii} = (-1)^{d(i,i)}c_i =c_i$, the rest of the proof is similar to Theorem 2.5 in \cite{MP2026}.

\end{proof}

\begin{example}
\begin{figure}[htbp]
\centering
\begin{tikzpicture}[scale=1.2,
                    colorstyle/.style={circle, fill, black, scale=.5},
                    >=stealth]
\tikzset{vertex/.style = {shape = circle, draw, minimum size = 1em, fill=pink!80}}
\tikzset{edge/.style = {->,thick}}

    \node[vertex] (1) at (0,4) {$1$};
    \node[vertex] (2) at (-2,0) {$2$};
    \node[vertex] (3) at (4,0) {$3$};
    \node[vertex] (4) at (6,4) {$4$};

    \draw[edge] (1) edge node[left]{$e_1 =2i$} (2);
    \draw[edge] (2) edge node[above]{$e_2 =2i$} (3);
    \draw[edge] (1) edge node[left]{$e_3 =4$} (3);
    \draw[edge] (1) edge node[above]{$e_5 =-2i$} (4);
    \draw[edge] (3) edge node[left]{$e_4 =2i$} (4);

    \end{tikzpicture}
    \caption{Example of a graph $G$ such that $wt(C) \in \R^+$ for even cycles and $wt(C) \in \R^-$ for odd cycles}.
    \end{figure}

    We know that the complex signless incidence matrix of $G$ is as follows: 

    \[
    N = 
    \begin{bmatrix}
        1+i & 0   & 2 & 0   & 1-i \\
        1-i & 1+i & 0 & 0   & 0   \\
        0   & 1-i & 2 & 1+i & 0   \\
        0   & 0   & 0 & 1-i & 1+i
    \end{bmatrix}
\]

    First, we show that the example above satisfies the condition in case (b). Let's denote the odd cycle on the left, the odd cycle on the right, and the even cycle  $C_{H_1}$, $C_{H_2}$, and $C_{H_3}$ in that order. 

    \vspace{0.5em}
    
    We have: 

    \begin{align*}
wt(C_{H_1})
    &= w_{12}w_{23}\overline{w_{13}}
    &= (2i)(2i)(4)
    &= -16,\\
wt(C_{H_2})
    &= w_{13}w_{34}\overline{w_{14}}
    &= (4)(2i)(2i)
    &= -16,\\
wt(C_{H_3})
    &= w_{12}w_{23}w_{34}\overline{w_{14}}
    &= (2i)(2i)(2i)(2i)
    &= 16.
\end{align*}

Hence, the example graph $G$ satisfies the condition in case (b) of Theorem~\ref{thm:NN^+}. Therefore, $NN+ = I_n  - \frac{1}{n}S$, where $N$ is the complex signless incidence matrix of $G$ and $S$ is the signed equiweight matrix. 

According to SageMath,

\[
NN^+= I_n-\frac{1}{n}S=
\left[
\begin{array}{rrrr}
 \frac{3}{4} &  \frac{1}{4}i &  \frac{1}{4} & -\frac{1}{4}i \\
-\frac{1}{4}i &  \frac{3}{4} &  \frac{1}{4}i &  \frac{1}{4} \\
 \frac{1}{4} & -\frac{1}{4}i &  \frac{3}{4} &  \frac{1}{4}i \\
 \frac{1}{4}i &  \frac{1}{4} & -\frac{1}{4}i &  \frac{3}{4}
\end{array}
\right]
\]

\end{example}

\section{Combinatorial Formula of \texorpdfstring{$N^+$}{N+} for Oriented Trees and Certain Unicyclic Graphs}

\begin{example}
    \begin{figure}[htbp]
    \centering
    \begin{tikzpicture}[
    node distance=2cm and 3cm,
    vertex/.style={
        circle, 
        draw=black, 
        fill=pink!80, 
        minimum size=5mm, 
        inner sep=0pt,
        font=\small\bfseries
    },
    edge/.style={
        ->, 
        >={Stealth[scale=2]}, 
        thick, 
        shorten >=1pt
    },
    weight/.style={
        font=\small\color{blue}, 
        fill=white, 
        inner sep=2pt
    }
    ]

    \node[vertex] (4) at (8,0) {4};
    \node[vertex] (2) at (4,0) {2};
    \node[vertex] (1) at (3,-2) {1};
    \node[vertex] (3) at (3,2) {3};
        
    \draw[->] (2) -- node[weight] {4 $: e_3$} (4);
    \draw[->] (1) -- node[weight] {9 $: e_1$} (2);
    \draw[<-] (3) -- node[weight] {i $: e_2$} (2);

    \end{tikzpicture}
    \caption{\color{blue}{Example of a graph $G$ such that $G$ is an oriented tree}}
    \end{figure}
    
    \color{blue}{
    The signless incidence matrix for $G$ is: $$N =
    \begin{bmatrix}
    3 & 0 & 0 \\
    3 & \sqrt{\i} & 2 \\
    0 & \sqrt{-\i} & 0 \\
    0 & 0 & 2
    \end{bmatrix}
    $$
    Using Sage Math, one can confirm that the Moore--Penrose inverse of $N$ is: $$N^+ = \frac{1}{4}
    \begin{bmatrix}
     1 & \frac{1}{3} & \frac{1}{3\i} & -\frac{1}{3} \\
    -\frac{1}{\sqrt{\i}} & \frac{1}{\sqrt{\i}} & \frac{3}{\sqrt{-\i}} & -\frac{1}{\sqrt{\i}} \\
    -\frac{1}{2} & \frac{1}{2} & \frac{1}{2\i} & \frac{3}{2}
    \end{bmatrix}
    $$
    \\
    According to Thm 3.2, we have:
    $$(N^+)_{11} = {\frac{1}{4}(-1)^{d(e_1,1)}\frac{3}{\sqrt{9}}\frac{wt(\gamma_{1 \rightarrow 1})}{|wt(\gamma_{1 \rightarrow 1})|}}= \frac{1}{4} $$

    $$(N^+)_{34} = {\frac{1}{4}(-1)^{d(e_3,4)}\frac{3}{\sqrt{4}}\frac{wt(\gamma_{4 \rightarrow 4})}{|wt(\gamma_{4 \rightarrow 4})|}}= \frac{3}{8} $$

    $$(N^+)_{23} ={\frac{1}{4}(-1)^{d(e_2,3)}\frac{3}{\sqrt{-\i}}\frac{wt(\gamma_{3 \rightarrow 3})}{|wt(\gamma_{3 \rightarrow 3})|}}= \frac{3}{4\sqrt{-\i}} $$
    }
\end{example}

\textcolor{blue}{Let $T$ be an oriented tree on $n \geq 2$ vertices with directed edges $e_1,e_2,...,e_{n-1}$. Suppose $e_i = (u,v)$ is an edge in $T$. Let $T - e_i$ denote the subdigraph obtained from $T$ by removing $e_i$. Then, following the notation used in \cite{MP2026}, we let $T_t(e_i)$ (resp. $T_h(e_i)$) denote the connected component of $T - e_i$ containing the tail vertex of $e_i$, $u$, (resp. the head vertex of $e_i$, $v$). In addition, $|T_t(e_i)|$ (resp. $|T_h(e_i)|$) denotes the number of vertices in the component of $T - e_i$ containing the tail vertex $u$ (resp. the head vertex $v$). Also, using the notation established in \cite{HM1}, we define $d(e_i, j) = \min(d(u,j), d(v,j))$.}

\begin{theorem}\label{mptree}
    \textcolor{blue}{Let $T$ be an oriented tree on $n \geq 2$ vertices with directed edges $e_1,e_2,...,e_{n-1}$, non-zero edge weights in $\C \setminus \R^-$, and complex signless incidence matrix $N$. Define $B$ as an $(n-1) \times n$ matrix such that 
    $$b_{ij} = \frac{(-1)^{d(e_i,j)}}{n}
    \begin{cases}
        \frac{|T_h(e_i)|}{\sqrt{w_{uv}}}\frac{|wt(\gamma_{j \rightarrow u})|}{wt(\gamma_{j \rightarrow u})} & \text{where } j \in T_t(e_i)
        \\
        \\
        \frac{|T_t(e_i)|}{\sqrt{\overline{w_{uv}}}}\frac{|wt(\gamma_{j \rightarrow v})|}{wt(\gamma_{j \rightarrow v})} & \text{where } j \in T_h(e_i) 
    \end{cases}
    $$
    where $e_i = (u,v)$. Then, $B$ is the Moore--Penrose inverse of $N$.
    }
\end{theorem}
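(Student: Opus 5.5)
The plan is to avoid checking all four Penrose conditions directly and instead use the rank information already established. By Theorem \ref{treerank}, $N$ is $n\times(n-1)$ with full column rank $n-1$. Hence $N^*N$ is invertible, $N^+=(N^*N)^{-1}N^*$, and $N^+N=I_{n-1}$. So $N^+$ is the unique matrix $X$ with $XN=I_{n-1}$ whose rows lie in the row space of $N^*$, that is, whose rows are orthogonal to the left null space of $N$. Equivalently, it suffices to prove two facts: (i) $BN=I_{n-1}$, and (ii) $Bx=0$ for the spanning vector $x$ of $\ker N^T$. The uniqueness follows because if $XN=I$ and $X$ kills $\ker N^*$, then $X=X(NN^+)=N^+$; for (ii) we can use that $\ker N^*=\overline{\ker N^T}$. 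An alternative route, which we keep in reserve, uses Theorem \ref{thm:NN^+}(b): show $NB=I_n-\frac1n S$ together with $BN=I_{n-1}$, and then verify the four Penrose conditions from these two identities.

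\textbf{Step 1: $BN=I_{n-1}$.} Fix edges $e_i=(u,v)$ and $e_k=(p,q)$. Then
\[
(BN)_{ik}=b_{ip}\sqrt{w_{pq}}+b_{iq}\sqrt{\overline{w_{pq}}}.
\]
\begin{itemize}
\item If $k=i$, then $u\in T_t(e_i)$ and $v\in T_h(e_i)$, and $d(e_i,u)=d(e_i,v)=0$. The two terms become $\frac{|T_h|}{n}$ and $\frac{|T_t|}{n}$, which sum to $1$.
\item If $k\neq i$, both endpoints $p,q$ of $e_k$ lie in the same component, say $T_t(e_i)$, with $q$ one step further from $u$ than $p$ (or the reverse). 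Then $d(e_i,q)=d(e_i,p)+1$, so the signs are opposite. Moreover $\gamma_{q\to u}$ is the edge $q\to p$ followed by $\gamma_{p\to u}$, so its weight is $\overline{w_{pq}}\,wt(\gamma_{p\to u})$, using $a_{qp}=\overline{w_{pq}}$. After inversion of unit weights, the two terms become
\[
\frac{\sqrt{w_{pq}}\,|wt(\gamma_{p\to u})|}{wt(\gamma_{p\to u})}\Bigl(1-\frac{|w_{pq}|}{\sqrt{w_{pq}}\sqrt{w_{pq}}}\cdot\frac{\sqrt{w_{pq}}}{\sqrt{\overline{w_{pq}}}}\Bigr)\cdot(\text{common factor}).
\]
This vanishes because $\sqrt{w_{pq}}\sqrt{\overline{w_{pq}}}=|w_{pq}|$, which is where Observation \ref{negreal} is used.
\item The case where $p,q$ lie in $T_h(e_i)$ is symmetric, with $v$ in place of $u$.
\end{itemize}
Working through the bookkeeping of conjugates, and of which endpoint is nearer, is the main computational burden. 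For each of the two orientations of $e_k$ relative to the path to $u$, we would write out the weight explicitly.

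\textbf{Step 2: orthogonality to the left kernel.} From the proof of Theorem \ref{thm:rank}, $\ker N^T$ is spanned by $x_j=(-1)^{d(1,j)}\,wt(\gamma_{1\to j})/|wt(\gamma_{1\to j})|$, so $\ker N^*$ is spanned by $\bar x$. We need $\sum_j b_{ij}\bar x_j=0$ for every $i$. Fix $e_i=(u,v)$.
\begin{itemize}
\item For $j\in T_t(e_i)$, we have $(-1)^{d(e_i,j)}=(-1)^{d(u,j)}$. By Observation \ref{treeunitweight}, $\overline{wt(\gamma_{1\to j})}/wt(\gamma_{j\to u})$ has the same unit as $\overline{wt(\gamma_{1\to u})}$, up to a positive factor; this uses $\overline{wt(\gamma_{1\to j})}=wt(\gamma_{j\to 1})$ and path concatenation in a tree. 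The sign also combines, since $(-1)^{d(u,j)+d(1,j)}=(-1)^{d(1,u)}$ by parity in a bipartite tree. So each such term equals a constant $\alpha\,|T_h|/n$, independent of $j$, and these terms sum to $\alpha\,|T_t||T_h|/n$.
\item Similarly, the terms with $j\in T_h(e_i)$ sum to $\beta\,|T_t||T_h|/n$.
\item Using the relation between $x_u$ and $x_v$ from the kernel equation, $x_v=-x_u\,w_{uv}/|w_{uv}|$, together with the factors $1/\sqrt{w_{uv}}$ and $1/\sqrt{\overline{w_{uv}}}$, we get $\beta=-\alpha$. Hence the total is $0$.
\end{itemize}

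I expect Step 2's phase bookkeeping to be the hardest part: matching the unit weights across the two components via $e_i$, and handling conjugation conventions for reversed walks. I would first verify it on the example preceding the theorem before writing it generally. Combining Steps 1 and 2 with the uniqueness argument gives $B=N^+$.
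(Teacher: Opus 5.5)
Your proposal is correct. For the last Penrose condition it takes a genuinely different route from the paper.

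Step~1 is the same entrywise check of $BN=I_{n-1}$ that the paper does. After that, the paper verifies $(NB)^*=NB$ by computing $NB$ directly. For a vertex $i$ it sums over the edges incident to $i$ and isolates the one edge on the path toward $j$. This gives $(NB)_{ij}=-\frac{(-1)^{d(i,j)}}{n}\frac{|wt(\gamma_{j\to i})|}{wt(\gamma_{j\to i})}$ for $i\neq j$, together with a real diagonal.

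You instead characterize $N^+$, for full column rank, as the unique left inverse annihilating $\ker N^*=\overline{\ker N^T}$. You then check $B\bar x=0$ one edge $e_i=(u,v)$ at a time:
\begin{itemize}
\item every term with $j\in T_t(e_i)$ equals $\frac{|T_h(e_i)|}{n}\cdot\frac{\bar x_u}{\sqrt{w_{uv}}}$;
\item every term with $j\in T_h(e_i)$ equals $\frac{|T_t(e_i)|}{n}\cdot\frac{\bar x_v}{\sqrt{\overline{w_{uv}}}}$;
\item $\bar x_v=-\bar x_u\,\overline{w_{uv}}/|w_{uv}|$ together with $\sqrt{w_{uv}}\sqrt{\overline{w_{uv}}}=|w_{uv}|$ makes the two totals cancel.
\end{itemize}
This also checks out numerically on the example preceding the theorem.

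Comparing the two routes:
\begin{itemize}
\item Your computation is one uniform sum per edge. It avoids the paper's case split on which edge at $i$ lies on the path toward $j$.
\item The price is the projection argument and an explicit kernel vector. For a tree, construct that vector directly by propagating $x_v=-x_u w_{uv}/|w_{uv}|$ outward from vertex $1$. The proof of Theorem~\ref{thm:rank} handles the tree case only by citing the rank result, so it does not supply the vector.
\item The paper's route displays $NB=I_n-\frac1n S$ explicitly. Yours recovers this at no cost: once $BN=I_{n-1}$ and $B\bar x=0$, $NB$ must be the orthogonal projector $I_n-\frac1n\bar x x^T$, and $\bar x_i x_j=s_{ij}$.
\end{itemize}

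One slip to correct: in (ii) you state the condition as $Bx=0$ for $x$ spanning $\ker N^T$. That is false in general. For the example tree, $x=(1,-1,i,1)^T$, and the first row of $B$ gives $\sum_j b_{1j}x_j=\frac16$. What you need, and what Step~2 actually verifies, is $B\bar x=0$.
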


\begin{proof}
    \textcolor{blue}{We will first show $BN = I_{n-1}$.} \\
    
    \textcolor{blue}{\textbf{Case 1: $i = j$}. 
    \\
    \indent Let $e_i = (u,v)$.}
    
    \color{blue}{$$(BN)_{ii} = b_{iu}\sqrt{w_{uv}} + b_{iv}\sqrt{\overline{w_{uv}}}$$  
        
    $$= \frac{(-1)^{d(e_i,u)}}{n}\frac{|T_h(e_i)|}{\sqrt{w_{uv}}}\frac{|wt(\gamma_{u \rightarrow u})|}{wt(\gamma_{u \rightarrow u})}\sqrt{w_{uv}}+ \frac{(-1)^{d(e_i,v)}}{n}\frac{|T_t(e_i)|}{\sqrt{\overline{w_{uv}}}}\frac{|wt(\gamma_{v \rightarrow v})|}{wt(\gamma_{v \rightarrow v})}\sqrt{\overline{w_{uv}}}$$
        
    $$= \frac{(-1)^{d(u,u)}}{n}\frac{|T_h(e_i)|}{\sqrt{w_{uv}}}\frac{|wt(\gamma_{u \rightarrow u})|}{wt(\gamma_{u \rightarrow u})}\sqrt{w_{uv}}+ \frac{(-1)^{d(v,v)}}{n}\frac{|T_t(e_i)|}{\sqrt{\overline{w_{uv}}}}\frac{|wt(\gamma_{v \rightarrow v})|}{wt(\gamma_{v \rightarrow v})}\sqrt{\overline{w_{uv}}}$$
    $$ = \frac{|T_h(e_i)|}{n} + \frac{|T_t(e_i)|}{n} = 1$$}

    \textcolor{blue}{\textbf{Case 2: $i \neq j$}. 
    \\
    \indent Let $e_i = (u,v)$ and $e_j = (k,\ell)$.} \\
    \indent \textcolor{blue}{\textbf{Case 2a}:} $k,\ell \in T_t(e_i)$. 
    \\
    \indent Without loss of generality, assume $k$ lies in $\gamma_{\ell \rightarrow u}$.
    
    \color{blue}{$$(BN)_{ij} = b_{ik}\sqrt{w_{k\ell}} + b_{i\ell}\sqrt{\overline{w_{k\ell}}}$$

    $$= \frac{(-1)^{d(e_i,k)}}{n}\frac{|T_h(e_i)|}{\sqrt{w_{uv}}}\frac{|wt(\gamma_{k \rightarrow u})|}{wt(\gamma_{k \rightarrow u})}\sqrt{w_{kl}}+ \frac{(-1)^{d(e_i,\ell)}}{n}\frac{|T_h(e_i)|}{\sqrt{{w_{uv}}}}\frac{|wt(\gamma_{\ell \rightarrow u})|}{wt(\gamma_{\ell \rightarrow u})}\sqrt{\overline{w_{k\ell}}}$$

    $$= \frac{(-1)^{d(e_i,k)}}{n}\sqrt{\overline{w_{k\ell}}} \left(\frac{|T_h(e_i)|}{\sqrt{w_{uv}}}\frac{|wt(\gamma_{k \rightarrow u})|}{wt(\gamma_{k \rightarrow u})}\frac{|\overline{w_{k\ell}}|}{\overline{w_{k\ell}}} - \frac{|T_h(e_i)|}{\sqrt{{w_{uv}}}}\frac{|wt(\gamma_{\ell \rightarrow u})|}{wt(\gamma_{\ell \rightarrow u})}\right)$$}
    \indent By assumption, we have:
    
    $$\frac{|wt(\gamma_{\ell \rightarrow u})|}{wt(\gamma_{\ell \rightarrow u})} = \frac{|wt(\gamma_{k \rightarrow u})|}{wt(\gamma_{k \rightarrow u})}\frac{|\overline{w_{k\ell}}|}{\overline{w_{k\ell}}}$$
    \indent Plugging this into the equation above, we see:
    $$(BN)_{ij} = 0$$
    \indent \textbf{Case 2b:} $k,\ell \in T_h(e_i)$. 
    
    \indent Without loss of generality, assume k lies in $\gamma_{\ell \rightarrow u}$. 
    
    \indent The proof is identical to that of Case 2a.

    Since $BN = I_{n-1}$, we get $BNB = B$, $NBN = N$, and $(BN)^* = BN$. Therefore, we only have to show $(NB)^* = NB$. 
    Let the edges incident at $i$ be $e_k = (i,k)$ for all $k = 1,2,...,r$ and $e_k = (k,i)$ for all $k = r+1,r+2,...,s$. 
    
\textbf{Case 1:} $i \neq j$.

Without loss of generality, let $e_1$ lie on the walk from vertex i to vertex j. Note that $d(e_k,j) = d(i,j)$ since $d(k,j) = d(i,j) + 1, k \neq 1$.
    $$(NB)_{ij} = (-1)^{d(e_1,j)}\left(\frac{|T_t(e_1)|}{n {\sqrt{\overline{w_{i1}}}}}\frac{|wt(\gamma_{j \rightarrow 1})|}{wt(\gamma_{j \rightarrow 1})}\right)\sqrt{w_{i1}} + \sum_{k=2}^r(-1)^{d(e_k,j)} \left(\frac{|T_h(e_k)|}{n {\sqrt{w_{ik}}}}\frac{|wt(\gamma_{j \rightarrow i})|}{wt(\gamma_{j \rightarrow i})}\right)\sqrt{w_{ik}} 
    $$
    $$+ \sum_{k=r+1}^s(-1)^{d(e_k,j)}\left(\frac{|T_t(e_k)|}{n {\sqrt{\overline{w_{ki}}}}}\frac{|wt(\gamma_{j \rightarrow i})|}{wt(\gamma_{j \rightarrow i})} \right)\sqrt{\overline{w_{ki}}}$$
    $$= (-1)^{d(e_1,j)}\left(\frac{|T_t(e_1)|}{n {\sqrt{\overline{w_{i1}}}}}\frac{|wt(\gamma_{j \rightarrow 1})|}{wt(\gamma_{j \rightarrow 1})}\right)\sqrt{w_{i1}} + (-1)^{d(j,i)}\frac{|wt(\gamma_{j \rightarrow i})|}{nwt(\gamma_{j \rightarrow i})}\left(\sum_{k=2}^r|T_h(e_k)| + \sum_{k=r+1}^s|T_t(e_k)|\right)$$
    
    $$= (-1)^{d(1,j)}\left(\frac{|T_t(e_1)|}{n {\sqrt{\overline{w_{i1}}}}}\frac{|wt(\gamma_{j \rightarrow 1})|}{wt(\gamma_{j \rightarrow 1})}\right)\sqrt{w_{i1}} + (-1)^{d(j,i)}\frac{|wt(\gamma_{j \rightarrow i})|}{nwt(\gamma_{j \rightarrow i})}\left(|T_t(e_1)|-1\right) $$
    By our assumption that $e_1$ lies on the walk from vertex i to vertex j, we have $(-1)^{d(j,1)} = -(-1)^{d(j,i)}$. So, we get:
    $$(NB)_{ij} = -(-1)^{d(j,i)}\frac{|wt(\gamma_{j \rightarrow i})|}{nwt(\gamma_{j \rightarrow i})}$$
    Then we deduce:
    $$\overline{(NB)_{ji}} = -(-1)^{d(i,j)}\frac{|\overline{wt({\gamma_{i \rightarrow j}})|}}{n\overline{wt(\gamma_{i \rightarrow j})}} =-(-1)^{d(j,i)}\frac{|wt(\gamma_{j \rightarrow i})|}{nwt(\gamma_{j \rightarrow i})} =(NB)_{ij}$$

\textbf{Case 2:} $i = j$.     
    $$(NB)_{ii} = (-1)^{d(e_1,i)}\left(\frac{|T_h(e_1)|}{n {\sqrt{{w_{i1}}}}}\frac{|wt(\gamma_{i \rightarrow i})|}{wt(\gamma_{i \rightarrow i})}\right)\sqrt{w_{i1}} + \sum_{k=2}^r(-1)^{d(e_k,i)} \left(\frac{|T_h(e_k)|}{n {\sqrt{w_{ik}}}}\frac{|wt(\gamma_{i \rightarrow i})|}{wt(\gamma_{i \rightarrow i})}\right)\sqrt{w_{ik}} 
    $$
    $$+ \sum_{k=r+1}^s(-1)^{d(e_k,i)}\left(\frac{|T_t(e_k)|}{n {\sqrt{\overline{w_{ki}}}}}\frac{|wt(\gamma_{i \rightarrow i})|}{wt(\gamma_{i \rightarrow i})} \right)\sqrt{\overline{w_{ki}}}$$

    $$= |T_h(e_1)| + \sum_{k=2}^r\frac{|T_h(e_k)|}{n} + \sum_{k=r+1}^s\frac{|T_t(e_k)|}{n} \in \R$$
    So, $(NB)_{ii} = \overline{(NB)_{ii}}$.
    Thus, we have shown that $B = N^+$.

\end{proof}

If $G$ is an oriented unicyclic graph on $n \geq 2$ vertices with cycle $C$, and $e_i$ is not in the edges that form $C$, then removing $e_i$ leaves two connected components. 

One of the components will be an oriented unicyclic graph, denoted as $G \setminus e_i[C]$. The other component will be an oriented tree which we will call $G \setminus e_i(C)$. 

If $e_i = (u,v)$, then $G \setminus (e_i,v](C)$ says that the tree component contains vertex $v$ while $G \setminus (e_i,u](C)$ says that the tree component contains vertex $u$.

\begin{example}
\begin{figure}[htbp]
\centering
\begin{tikzpicture}[
    node distance=2cm and 3cm,
    vertex/.style={
        circle, 
        draw=black, 
        fill=pink!80, 
        minimum size=5mm, 
        inner sep=0pt,
        font=\small\bfseries
    },
    edge/.style={
        ->, 
        >={Stealth[scale=2]}, 
        thick, 
        shorten >=1pt
    },
    weight/.style={
        font=\small\color{blue}, 
        fill=white, 
        inner sep=2pt
    }
    ]

    \node[vertex] (1) at (8,0) {1};
    \node[vertex] (2) at (8,-4) {4};
    
    \node[vertex] (4) at (4,0) {2};
    \node[vertex] (3) at (4,-4) {3};
    
    \node[vertex] (5) at (12,0) {5};
    
    \draw[->] (1) -- node[weight] {$4 : e_2$} (5);
    \draw[->] (1) -- node[weight] {$\mathrm{i} : e_1$} (4);
    \draw[->] (4) -- node[weight] {$5\mathrm{i}+2 : e_3$} (3);
    \draw[->] (3) -- node[weight] {$9 : e_4$} (2);
    \draw[->] (2) -- node[weight] {$1 : e_5$} (1);

    \end{tikzpicture}
    \caption{\color{blue}{Example of a graph $G$ such that $G$ is even unicyclic graph with $wt(C) \notin \R^+$}.}
    \end{figure}
    
    \color{blue}{
    According to Sage Math and rounding to the fifth digit if needed, $(N^+)_{11} = 0.28547 - 0.42164\i$, $(N^+)_{23} = 0$, $(N^+)_{25} = 0.5$, and $(N^+)_{51} = 0.5 + 0.09629\i$.
    \\
    According to Theorem \ref{mpeven}, we have:
    $$(N^+)_{11} = \frac{(-1)^{d(1,1)}\frac{wt(\gamma_{1 \rightarrow 1})}{|wt(\gamma_{1 \rightarrow 1})|}}{{\sqrt{-\i}}(\i - \frac{2-5\i}{\sqrt{29}})} = 0.28547 - 0.42164\i $$

    $$(N^+)_{23} = 0 = 0  $$

    $$(N^+)_{25} = {(-1)^{d(5,5)}\frac{1}{\sqrt4}\frac{wt(\gamma_{5 \rightarrow 5})}{|wt(\gamma_{5 \rightarrow 5})|}}= \frac{1}{2} = 0.5 $$

    $$(N^+)_{51} = \frac{(-1)^{d(1,4)}\frac{wt(\gamma_{4 \rightarrow 1})}{|wt(\gamma_{4 \rightarrow 1})|}}{{1}(1 - \frac{wt(\gamma_{4 \rightarrow 1})}{|wt(\gamma_{4 \rightarrow 1})|})} =  \frac{(-1)\frac{-\i(2-5\i)}{\sqrt{29}}}{{1}(1 + \frac{\i(2-5\i)}{\sqrt{29}})} = 0.5 + 0.09629\i$$
    
    }    
\end{example}

\begin{theorem}\label{mpeven}
    \color{blue}{Let $G$ be an oriented unicyclic graph on $n \geq 2$ vertices with even cycle $C$, non-zero edge weights in $\C \setminus \R^-$,  and complex signless incidence matrix $N$. If $wt(C) \in \C \setminus \R^+$, then by Lemma \ref{barik}, we can define the $n \times n$ matrix $F$ such that 
    $$f_{ij} = 
    \begin{cases}
        (-1)^{d^*(j,u)}\frac{\frac{wt(\gamma_{u \rightarrow j})}{|wt(\gamma_{u \rightarrow j})|}}{\sqrt{\overline{w_{uv}}}(\frac{w_{uv}}{|w_{uv}|}-\frac{wt(\gamma_{u \rightarrow v})}{|wt(\gamma_{u \rightarrow v})|})} & \text{where } e_i = (u,v) \in C
        \\
        \\
        0 & \text{where } e_i \notin C \text{and } j \in G \setminus e_i[C] 
        \\
        \\
        (-1)^{d^*(j,u)}\frac{1}{\sqrt{{w_{uv}}}}\frac{|wt(\gamma_{j \rightarrow u})|}{wt(\gamma_{j \rightarrow u})} & \text{where } e_i \notin C \text{and } j \in G \setminus (e_i,u](C) 
        \\
        \\
        (-1)^{d^*(j,v)}\frac{1}{\sqrt{\overline{w_{uv}}}}\frac{|wt(\gamma_{j \rightarrow v})|}{wt(\gamma_{j \rightarrow v})} & \text{where } e_i \notin C \text{and } j \in G \setminus (e_i,v](C) 
    \end{cases}
    $$
     where $d^*(s,r)$ and $\gamma_{s \rightarrow r}$ denote the 
    distance and walk, respectively, between vertex $s$ and vertex $r$ in the underlying tree that is formed by removing $e_i = (u,v)$ from $G$. Then, $F$ is the Moore--Penrose inverse of $N$.  
    }
\end{theorem}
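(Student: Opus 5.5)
The matrix $N$ here is square: a unicyclic graph on $n$ vertices has exactly $n$ edges. Since $C$ is even and $wt(C)\notin\R^+$, Theorem~\ref{thm:rank} gives $\rank(N)=n$. So $N$ is invertible and $N^+=N^{-1}$. The plan is therefore to verify only $FN=I_n$; the four Penrose conditions then follow automatically.

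Write $(FN)_{ik}=f_{ip}\sqrt{w_{pq}}+f_{iq}\sqrt{\overline{w_{pq}}}$ for $e_k=(p,q)$. I will split on whether the row edge $e_i$ lies on $C$, and then on the position of $e_k$. Throughout, $G-e_i$ is a tree, so Observation~\ref{treeunitweight} makes every unit weight $wt(\gamma)/|wt(\gamma)|$ in the formula well defined. This is also why the statement uses the tree distance $d^*$.

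\textbf{Case 1: $e_i\notin C$.} The row of $F$ vanishes on the unicyclic component $G\setminus e_i[C]$. On the tree component it coincides with the corresponding row of the tree formula in Theorem~\ref{mptree}, except that the factors $|T_h(e_i)|/n$ and $|T_t(e_i)|/n$ are replaced by $1$.
\begin{itemize}
\item For $k=i$, only the endpoint on the tree side contributes. The product $\frac{1}{\sqrt{w_{uv}}}\sqrt{w_{uv}}$, or its conjugate analogue, gives $1$.
\item For $e_k$ inside the tree component, the cancellation is exactly Case 2a of Theorem~\ref{mptree}. Two adjacent vertices $p,q$ there have unit weights differing by the unit weight of the edge between them, and signs $(-1)^{d^*}$ of opposite parity, so the two terms cancel. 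Observation~\ref{negreal} is used to write $\sqrt{w}=\sqrt{\overline w}\,w/|w|$.
\item For $e_k$ in the unicyclic component, both entries are $0$.
\end{itemize}

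\textbf{Case 2: $e_i=(u,v)\in C$.} Then $G-e_i$ is a spanning tree. The row is $f_{ij}=K\,(-1)^{d^*(j,u)}\rho(u\to j)$, where $\rho$ denotes unit weight in that tree and
\[
K^{-1}=\sqrt{\overline{w_{uv}}}\Bigl(\tfrac{w_{uv}}{|w_{uv}|}-\rho(u\to v)\Bigr).
\]
\begin{itemize}
\item For $e_k\neq e_i$, the edge lies in the tree, so the same adjacent-vertex cancellation as above gives $(FN)_{ik}=0$. One must check that the orientation of $\rho(u\to j)$, measured from $u$, matches the factor $\sqrt{w_{pq}}$ versus $\sqrt{\overline{w_{pq}}}$. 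I expect this to mirror the computation in the proof of Theorem~\ref{thm:rank}, Case 2a.
\item For $k=i$, compute $K\bigl(\sqrt{w_{uv}}+(-1)^{d^*(v,u)}\rho(u\to v)\sqrt{\overline{w_{uv}}}\bigr)$. Since $C$ is even, the tree path from $u$ to $v$ has odd length $|C|-1$, so the sign is $-1$. Factoring out $\sqrt{\overline{w_{uv}}}$ gives exactly $K^{-1}$, hence the entry is $1$.
\end{itemize}

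The denominator must be nonzero, i.e. $\rho(u\to v)\neq w_{uv}/|w_{uv}|$. Multiplying by $\overline{w_{uv}}/|w_{uv}|$ shows that equality would mean the unit weight of the closed walk around $C$ equals $1$, i.e. $wt(C)\in\R^+$. This is excluded by hypothesis, and it is where that hypothesis (via Lemma~\ref{barik}) is used.

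\textbf{Main obstacle.} I expect the main difficulty to be the bookkeeping of orientations and conjugates. The formula uses $\rho(u\to j)$ in the cycle case but $|wt(\gamma_{j\to u})|/wt(\gamma_{j\to u})$ in the tree cases. These are equal, since reversing a walk conjugates its weight. I would fix this convention first, and then check that each edge in Case 2 cancels with the correct conjugate. The case $k=i$ on the cycle is where the parity of $C$ genuinely enters.
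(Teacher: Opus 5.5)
Your proposal is correct and follows the paper's proof. Both reduce, via invertibility of $N$ from Theorem~\ref{thm:rank}, to checking $FN=I_n$, using the same split on whether $e_i\in C$, the same adjacent-vertex cancellation for off-diagonal entries, and the same parity fact ($d^*(u,v)$ odd for an even cycle) for the diagonal cycle entry. Your explicit check that $\frac{w_{uv}}{|w_{uv}|}-\rho(u\to v)\neq 0$ spells out what the paper leaves to its citation of Lemma~\ref{barik}; just note that $G-e_i$ is a tree only when $e_i\in C$, though your Case 1 already correctly restricts the nonzero entries to the tree component.
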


\begin{proof}
    \color{blue}{
    It suffices to show $FN = I_n$ as Theorem \ref{thm:rank} implies that $N$ is invertible and thus $F = N^{-1}=N^+$. 
    \\
    \textbf{Case 1}: $i = j$.
    \\
    \textbf{Case 1a}: $e_i = (u,v) \in C$.
    $$(FN)_{ii} = f_{iu}\sqrt{w_{uv}} + f_{iv}\sqrt{\overline{w_{uv}}}$$
    $$= (-1)^{d^*(u,u)}\frac{\frac{wt(\gamma_{u \rightarrow u})}{|wt(\gamma_{u \rightarrow u})|}}{\sqrt{\overline{w_{uv}}}(\frac{w_{uv}}{|w_{uv}|}-\frac{wt(\gamma_{u \rightarrow v})}{|wt(\gamma_{u \rightarrow v})|})} \sqrt{w_{uv}} + (-1)^{d^*(u,v)}\frac{\frac{wt(\gamma_{u \rightarrow v})}{|wt(\gamma_{u \rightarrow v})|}}{\sqrt{\overline{w_{uv}}}(\frac{w_{uv}}{|w_{uv}|}-\frac{wt(\gamma_{u \rightarrow v})}{|wt(\gamma_{u \rightarrow v})|})} \sqrt{\overline{w_{uv}}}$$
    because $C$ is an even cycle, $(-1)^{d^*(u,v)} = -1$. Thus, we get:
    $$= \frac{\frac{w_{uv}}{|w_{uv}|}}{\frac{w_{uv}}{|w_{uv}|}-\frac{wt(\gamma_{u \rightarrow v})}{|wt(\gamma_{u \rightarrow v})|}} - \frac{\frac{wt(\gamma_{u \rightarrow v})}{|wt(\gamma_{u \rightarrow v})|}}{\frac{w_{uv}}{|w_{uv}|}-\frac{wt(\gamma_{u \rightarrow v})}{|wt(\gamma_{u \rightarrow v})|}} = 1$$
    \\
    \textbf{Case 1b}: $e_i = (u,v) \notin C$.
    \\
     Without loss of generality, let $u \in G \setminus e_i(C)$ and $v \in G \setminus e_i[C]$.
    $$(FN)_{ii} = f_{iu}\sqrt{w_{uv}} + f_{iv}\sqrt{\overline{w_{uv}}}$$
    $$= (-1)^{d^*(u,u)}\frac{1}{\sqrt{{w_{uv}}}}\frac{|wt(\gamma_{u \rightarrow u})|}{wt(\gamma_{u \rightarrow u})} \sqrt{w_{uv}}- 0\sqrt{\overline{w_{uv}}} =1 $$
    \\
    \textbf{Case 2}: $i \neq j$
    \\
     Let $e_i = (u,v)$ and $e_j = (k,\ell)$.
    \\
    \textbf{Case 2a}: $e_i = (u,v) \in C$. 
    \\
    $$(FN)_{ij} = f_{ik}\sqrt{w_{k\ell}} + f_{i\ell}\sqrt{\overline{w_{k\ell}}}$$
    $$= (-1)^{d^*(k,u)}\frac{\frac{wt(\gamma_{u \rightarrow k})}{|wt(\gamma_{u \rightarrow k})|}}{\sqrt{\overline{w_{uv}}}(\frac{w_{uv}}{|w_{uv}|}+\frac{wt(\gamma_{u \rightarrow v})}{|wt(\gamma_{u \rightarrow v})|})} \sqrt{w_{k\ell}} + (-1)^{d^*(\ell,u)}\frac{\frac{wt(\gamma_{u \rightarrow \ell})}{|wt(\gamma_{u \rightarrow \ell})|}}{\sqrt{\overline{w_{uv}}}(\frac{w_{uv}}{|w_{uv}|}+\frac{wt(\gamma_{u \rightarrow v})}{|wt(\gamma_{u \rightarrow v})|})} \sqrt{\overline{w_{k\ell}}}$$
    $$= (-1)^{d^*(k,u)}\sqrt{\overline{w_{k\ell}}}\frac{1}{\sqrt{\overline{w_{uv}}}(\frac{w_{uv}}{|w_{uv}|}+\frac{wt(\gamma_{u \rightarrow v})}{|wt(\gamma_{u \rightarrow v})|})} \left( \frac{wt(\gamma_{u \rightarrow k})}{|wt(\gamma_{u \rightarrow k})|}\frac{w_{k\ell}}{|w_{k\ell}|}-\frac{wt(\gamma_{u \rightarrow \ell})}{|wt(\gamma_{u \rightarrow \ell})|} \right) = 0$$
    where the third equality holds as $k$ either lies in $\gamma_{u \rightarrow \ell}$ or $\ell$ lies in $\gamma_{u \rightarrow k}$ which implies that $d^*(\ell,u) = d^*(k,u) \pm 1$, and Observation \ref{treeunitweight} implies that $\frac{wt(\gamma_{u \rightarrow k})}{|wt(\gamma_{u \rightarrow k})|}\frac{w_{k\ell}}{|w_{k\ell}|} = \frac{wt(\gamma_{u \rightarrow \ell})}{|wt(\gamma_{u \rightarrow \ell})|}$.
        
    \textbf{Case 2b}: $e_i = (u,v) \notin C$. 
    \\
    \textbf{Case 2b.1}: $k,\ell \in G\setminus e_i[C]$. 
    $$(FN)_{ij} = f_{ik}\sqrt{w_{k\ell}} + f_{il}\sqrt{\overline{w_{k\ell}}} = 0 + 0 = 0$$
    \\
    \textbf{Case 2b.2}: $k,\ell \in G\setminus (e_i, v](C)$ or $k,\ell \in G\setminus (e_i, u](C)$.
    \\
    Without loss of generality, we can assume $k,\ell \in G\setminus (e_i, v](C)$.
    
    $$(FN)_{ij} = f_{ik}\sqrt{w_{k\ell}} + f_{il}\sqrt{\overline{w_{k\ell}}}$$
    $$= (-1)^{d^*(k,v)}\frac{1}{\sqrt{\overline{w_{uv}}}}\frac{|wt(\gamma_{k \rightarrow v})|}{wt(\gamma_{k \rightarrow v})}\sqrt{w_{k\ell}} + (-1)^{d^*(\ell,v)}\frac{1}{\sqrt{\overline{w_{uv}}}}\frac{|wt(\gamma_{\ell \rightarrow v})|}{wt(\gamma_{\ell \rightarrow v})}\sqrt{\overline{w_{k\ell}}}$$
    $$= (-1)^{d^*(k,v)}\frac{1}{\sqrt{\overline{w_{uv}}}}\sqrt{w_{k\ell}}\left(\frac{|wt(\gamma_{k \rightarrow v})|}{wt(\gamma_{k \rightarrow v})} - \frac{|wt(\gamma_{\ell \rightarrow v})|}{wt(\gamma_{\ell \rightarrow v})}\frac{|w_{k\ell}|}{w_{k\ell}} \right) = 0$$
     where the third equality holds as $k$ either lies in $\gamma_{\ell \rightarrow v}$ or $\ell$ lies in $\gamma_{k \rightarrow v}$ which implies that $d^*(\ell,u) = d^*(k,u) \pm 1$, and Observation \ref{treeunitweight} implies that $\frac{|wt(\gamma_{\ell \rightarrow v})|}{wt(\gamma_{\ell \rightarrow v})}\frac{|w_{k\ell}|}{w_{k\ell}} = \frac{|wt(\gamma_{k \rightarrow v})|}{wt(\gamma_{k \rightarrow v})}$.
    }
\end{proof}

\begin{example}
\begin{figure}[htbp]
\centering
\begin{tikzpicture}[
    node distance=2cm and 3cm,
    vertex/.style={
        circle, 
        draw=black, 
        fill=pink!80, 
        minimum size=5mm, 
        inner sep=0pt,
        font=\small\bfseries
    },
    edge/.style={
        ->, 
        >={Stealth[scale=2]}, 
        thick, 
        shorten >=1pt
    },
    weight/.style={
        font=\small\color{blue}, 
        fill=white, 
        inner sep=2pt
    }
    ]

    \node[vertex] (3) at (8,0) {3};
    \node[vertex] (2) at (6,4) {2};
    \node[vertex] (1) at (4,0) {1};
    \node[vertex] (4) at (12,0) {4};
    \node[vertex] (5) at (0,0) {5};

    \draw[<-] (2) -- node[weight] {6-2i $: e_1$} (1);
    \draw[<-] (1) -- node[weight] {1 $: e_4$} (3);
    \draw[<-] (4) -- node[weight] {i $: e_5$} (3);
    \draw[<-] (3) -- node[weight] {6+2i $: e_3$} (2);
    \draw[<-] (5) -- node[weight] {4 $: e_2$} (1);

    \end{tikzpicture}
    \caption{\color{blue}{Example of a graph $G$ such that $G$ is an odd unicyclic graph with $wt(C) \in \R^+$}.}
    \end{figure}
    
    \color{blue}{
    According to Sage Math and rounding to the fifth digit if needed, $(N^+)_{54} = 0.70711 + 0.70711\i$, $(N^+)_{33} = 0.19625 + 0.03185\i$, $(N^+)_{14} = -0.03185 + 0.19625\i$, and $(N^+)_{44} = -0.5\i$. 
    \\
    According to Theorem \ref{mpodd}, we have:
    $$(N^+)_{54} = {(-1)^{d^*(4,4)}\frac{1}{\sqrt-\i}\frac{wt(\gamma_{4 \rightarrow 4})}{|wt(\gamma_{4 \rightarrow 4})|}}= \frac{1}{\sqrt-\i} = 0.70711 + 0.70711\i  $$

    $$(N^+)_{33} = \frac{(-1)^{d^*(3,2)}\frac{wt(\gamma_{2 \rightarrow 3})}{|wt(\gamma_{2 \rightarrow 3})|}}{{\sqrt{6-2\i}}(\frac{6+2\i}{\sqrt40} + \frac{6+2\i}{\sqrt40})} = \frac{6+2\i}{\sqrt{40}}\frac{1}{{\sqrt{6-2\i}}(\frac{6+2\i}{\sqrt{40}} + \frac{6+2\i}{\sqrt{40}})} = \frac{1}{2\sqrt{6-2\i}} = 0.19625 + 0.03185\i  $$

    $$(N^+)_{14} = \frac{(-1)^{d^*(4,1)}\frac{wt(\gamma_{1 \rightarrow 4})}{|wt(\gamma_{1 \rightarrow 4})|}}{{\sqrt{6+2\i}}(2\frac{6-2\i}{\sqrt{40}})} = i\frac{1}{{\sqrt{6+2\i}}(2\frac{6-2\i}{\sqrt{40}})} = -0.03185 + 0.19625\i $$

    $$(N^+)_{44} =\frac{(-1)^{d^*(4,3)}\frac{wt(\gamma_{3 \rightarrow 4})}{|wt(\gamma_{3 \rightarrow 4})|}}{{\sqrt{1}}(1 + 1)} = -0.5\i  $$

    }        
\end{example}

\begin{lemma}\label{oddwelldef}
    \textcolor{blue}{Let $G$ be an oriented unicyclic graph on $n \geq 2$ vertices with cycle $C$, vertices $i$ and $j$, and non-zero edge weights in $\C \setminus \R^-$. Also, let $\gamma'_{i \rightarrow j}$ and $\gamma_{i \rightarrow j}$ be any two walks from vertex $i$ to vertex $j$.
    If $wt(C) \in \C \setminus \R^-$, then $\frac{wt(\gamma'_{i \rightarrow j})}{|wt(\gamma'_{i \rightarrow j})|} \neq - \frac{wt(\gamma_{i \rightarrow j})}{|wt(\gamma_{i \rightarrow j})|}$}.
\end{lemma}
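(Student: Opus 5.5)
The plan is to pin down exactly how two walks between the same pair of vertices can differ in a unicyclic graph. Then I would show that the ratio of their unit weights is the unit weight of the cycle, or its conjugate. Throughout, ``walk'' means a walk without repeated vertices, as in the paper. The one identity I need is $a_{ji}=\overline{a_{ij}}$ from the definition of the complex adjacency matrix. It gives, for any walk $\rho$ with reverse $\rho^{-1}$, $wt(\rho^{-1})=\overline{wt(\rho)}$.

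First, the structure. If $\gamma_{i\rightarrow j}$ and $\gamma'_{i\rightarrow j}$ are equal as walks, their unit weights coincide. Since a unit complex number $z$ never satisfies $z=-z$, the claim holds in this case. Otherwise, the symmetric difference of their edge sets must contain a cycle, because the union of two distinct paths with the same endpoints contains a cycle. Since $G$ is unicyclic, that cycle is $C$. So both walks agree on an initial segment $A$ from $i$ to a vertex $a\in V(C)$ and on a final segment $B$ from a vertex $b\in V(C)$ to $j$. Between $a$ and $b$, one walk uses the arc $P_1$ of $C$ and the other uses the complementary arc $P_2$, with $a\neq b$. I would justify this by noting that once a walk leaves $C$ towards $j$ it must follow the unique path in the tree hanging off $C$. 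This is because every edge off $C$ is a bridge of $G_u$, so the part of any walk outside $C$ is forced.

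Second, the computation. We have $wt(\gamma)=wt(A)\,wt(P_1)\,wt(B)$ and $wt(\gamma')=wt(A)\,wt(P_2)\,wt(B)$. Hence
\[
wt(\gamma)\,\overline{wt(\gamma')}=|wt(A)|^2\,|wt(B)|^2\, wt(P_1)\,\overline{wt(P_2)} = |wt(A)|^2\,|wt(B)|^2\, wt(P_1)\,wt(P_2^{-1}).
\]
Here $P_1$ followed by $P_2^{-1}$ is a closed walk going once around $C$. So $wt(P_1)\,wt(P_2^{-1})$ equals $wt(C)$ for one of the two orientations, i.e.\ $wt(C)$ or $\overline{wt(C)}$. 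Dividing by absolute values, we get that $\frac{wt(\gamma)}{|wt(\gamma)|}\cdot\overline{\frac{wt(\gamma')}{|wt(\gamma')|}}$ equals $\frac{wt(C)}{|wt(C)|}$ or its conjugate.

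Third, the conclusion. Suppose $\frac{wt(\gamma')}{|wt(\gamma')|}=-\frac{wt(\gamma)}{|wt(\gamma)|}$. Then the left side above equals $-1$. So $wt(C)/|wt(C)|=-1$, since the conjugate of $-1$ is $-1$. This means $wt(C)\in\R^-$, contradicting the hypothesis.

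The main obstacle is the structural step: rigorously showing that two distinct walks differ exactly in traversing complementary arcs of $C$. I would handle it with the bridge argument above, by the same branch-vertex reasoning as in Lemma \ref{dis}. The bookkeeping of which orientation of $C$ appears is harmless, because $-1$ is fixed under conjugation.
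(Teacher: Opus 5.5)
Your proof is correct and follows essentially the paper's argument: the paper also reduces the comparison of $wt(\gamma'_{i\rightarrow j})$ and $wt(\gamma_{i\rightarrow j})$ to their parts on $C$, then multiplies by a conjugate to obtain a positive multiple of $wt(C)$. The differences are minor: the paper treats $wt(C)\in\R^+$ separately via Lemma \ref{barik} and shows the ratio is non-real when $wt(C)$ is non-real, whereas your single computation covers both cases at once; you also make explicit the complementary-arcs structure, the orientation ambiguity between $wt(C)$ and $\overline{wt(C)}$, and the case $\gamma=\gamma'$, which the paper's Case 2 passes over.
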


\begin{proof}

    \textbf{Case 1:} $wt(C) \in \R^+$. 
    
    In this scenario, Lemma \ref{barik} implies that $\frac{wt(\gamma'_{i \rightarrow j})}{|wt(\gamma'_{i \rightarrow j})|} = \frac{wt(\gamma_{i \rightarrow j})}{|wt(\gamma_{i \rightarrow j})|}$. 
    
    \textbf{Case 2:} $wt(C) \in \C \setminus \R$. 
    $$\frac{wt(\gamma'_{i \rightarrow j})}{wt(\gamma_{i \rightarrow j})} = \frac{wt(\gamma'_{i \rightarrow j}[C])}{wt(\gamma_{i \rightarrow j}[C])} = \frac{wt(\gamma'_{i \rightarrow j}[C])}{wt(\gamma_{i \rightarrow j}[C])}\frac{\overline{wt(\gamma_{i \rightarrow j}[C])}}{\overline{wt(\gamma_{i \rightarrow j}[C])}} = \frac{wt(C)}{|wt(\gamma_{i \rightarrow j})|^2} \in \C \setminus \R$$
    Thus, $\frac{wt(\gamma'_{i \rightarrow j})}{|wt(\gamma'_{i \rightarrow j})|} \neq - \frac{wt(\gamma_{i \rightarrow j})}{|wt(\gamma_{i \rightarrow j})|}$ for if not, then we have $\frac{wt(\gamma'_{i \rightarrow j})}{wt(\gamma_{i \rightarrow j})} \in \R^-$.
\end{proof}

\begin{theorem}\label{mpodd}
    \color{blue}{Let $G$ be an oriented unicyclic graph on $n \geq 2$ vertices with odd cycle $C$, non-zero edge weights in $\C \setminus \R^-$, and complex signless incidence matrix $N$. If $wt(C) \in \C \setminus \R^-$, then by Lemma \ref{oddwelldef}, we can define the $n \times n$ matrix $H$ such that
    $$h_{ij} = 
    \begin{cases}
        (-1)^{d^*(j,u)}\frac{\frac{wt(\gamma_{u \rightarrow j})}{|wt(\gamma_{u \rightarrow j})|}}{\sqrt{\overline{w_{uv}}}(\frac{w_{uv}}{|w_{uv}|}+\frac{wt(\gamma_{u \rightarrow v})}{|wt(\gamma_{u \rightarrow v})|})} & \text{where } e_i = (u,v) \in C
        \\
        \\
        0 & \text{where } e_i \notin C \text{and } j \in G \setminus e_i[C] 
        \\
        \\
        (-1)^{d^*(j,u)}\frac{1}{\sqrt{{w_{uv}}}}\frac{|wt(\gamma_{j \rightarrow u})|}{wt(\gamma_{j \rightarrow u})} & \text{where } e_i \notin C \text{and } j \in G \setminus (e_i,u](C) 
        \\
        \\
        (-1)^{d^*(j,v)}\frac{1}{\sqrt{\overline{w_{uv}}}}\frac{|wt(\gamma_{j \rightarrow v})|}{wt(\gamma_{j \rightarrow v})} & \text{where } e_i \notin C \text{and } j \in G \setminus (e_i,v](C) 
    \end{cases}
    $$
    where $d^*(s,r)$ and $\gamma_{s \rightarrow r}$ denote the 
    distance and walk, respectively, between vertex $s$ and vertex $r$ in the underlying tree that is formed by removing $e_i = (u,v)$ from $G$. Then, $H$ is the Moore--Penrose inverse of $N$ where $N$ is the complex signless incidence matrix of $G$. 
}
\end{theorem}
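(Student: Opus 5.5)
I will mirror the proof of Theorem \ref{mpeven}. First, $G$ is unicyclic and its odd cycle $C$ has $wt(C)\notin\R^-$, so by Theorem \ref{thm:rank} we have $\rank(N)=n$. Since $N$ is square $n\times n$ (there are $n$ edges), $N$ is invertible. It therefore suffices to show $HN=I_n$; then $H=N^{-1}=N^+$. Before that, I check that $H$ is well defined. For $e_i=(u,v)\in C$, the denominator $\frac{w_{uv}}{|w_{uv}|}+\frac{wt(\gamma_{u\to v})}{|wt(\gamma_{u\to v})|}$ vanishes exactly when the unit weight of the one-edge walk $(u,v)$ is the negative of the unit weight of the walk $\gamma_{u\to v}$ around the rest of the cycle. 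Lemma \ref{oddwelldef} excludes this. Note that $\gamma_{u\to v}$ here is the path in the tree $G-e_i$, of even length $|C|-1$, so $(-1)^{d^*(u,v)}=+1$. This is exactly why the sign in the denominator is $+$ here, where it was $-$ in the even case.

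\textbf{Diagonal entries.} Take $e_i=(u,v)\in C$. Then
$(HN)_{ii}=h_{iu}\sqrt{w_{uv}}+h_{iv}\sqrt{\overline{w_{uv}}}$. Using $\sqrt{w_{uv}}/\sqrt{\overline{w_{uv}}}=w_{uv}/|w_{uv}|$ (Observation \ref{negreal}) and $(-1)^{d^*(v,u)}=+1$, this becomes
$\bigl(\frac{w_{uv}}{|w_{uv}|}+\frac{wt(\gamma_{u\to v})}{|wt(\gamma_{u\to v})|}\bigr)/\bigl(\frac{w_{uv}}{|w_{uv}|}+\frac{wt(\gamma_{u\to v})}{|wt(\gamma_{u\to v})|}\bigr)=1$.
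Now take $e_i\notin C$, and without loss of generality let $u$ lie in the tree component. Then $h_{iu}\sqrt{w_{uv}}=1$ and $h_{iv}=0$, so $(HN)_{ii}=1$.

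\textbf{Off-diagonal entries.} Let $e_j=(k,\ell)$ with $j\neq i$. Since $e_j\ne e_i$, the edge $e_j$ is an edge of the tree $G-e_i$. Hence one of $k,\ell$ lies on the tree path from the other to $u$ (or to $v$), so $d^*(\ell,\cdot)=d^*(k,\cdot)\pm1$. By Observation \ref{treeunitweight}, the unit weights multiply along that path. There are three cases.
\begin{itemize}
\item $e_i\in C$: factor out $\sqrt{\overline{w_{k\ell}}}$. The bracket $\frac{wt(\gamma_{u\to k})}{|\cdot|}\frac{w_{k\ell}}{|w_{k\ell}|}-\frac{wt(\gamma_{u\to\ell})}{|\cdot|}$ vanishes, with opposite signs supplied by the parity change.
\item $e_i\notin C$ and both $k,\ell$ lie in the unicyclic component: both entries are $0$.
\item $e_i\notin C$ and both $k,\ell$ lie in the tree component: this is exactly the computation of Case 2b.2 of Theorem \ref{mpeven}.
\end{itemize}
The edge $e_j$ cannot straddle the two components, because $e_i$ is the only edge joining them.

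The main obstacle is the bookkeeping in the case $e_i\in C$ when $\ell$ is nearer to $u$ than $k$. In that configuration the orientation of $e_j$ relative to the path from $u$ is reversed, and one must use $\overline{w_{k\ell}}$ and the conjugate factor correctly. I would handle it by writing $\frac{wt(\gamma_{u\to k})}{|\cdot|}=\frac{wt(\gamma_{u\to\ell})}{|\cdot|}\frac{\overline{w_{k\ell}}}{|w_{k\ell}|}$. Multiplying through by $\frac{w_{k\ell}}{|w_{k\ell}|}$ then gives the same cancellation. The rest is identical to the even case; the only difference is the parity fact $(-1)^{d^*(u,v)}=1$ for an odd cycle.
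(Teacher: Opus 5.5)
Your proposal is correct and follows the paper's proof essentially verbatim: you get invertibility of the square matrix $N$ from Theorem \ref{thm:rank}, then verify $HN=I_n$ using the same case split as in Theorem \ref{mpeven}, where the only change is the parity fact $(-1)^{d^*(u,v)}=+1$ for the odd cycle. You are more careful than the paper's ``without loss of generality'' in two places: you treat the reversed-orientation subcase explicitly via $\overline{w_{k\ell}}$, and you justify the nonvanishing denominator through Lemma \ref{oddwelldef}; also note that $HN=I_n$ for square matrices already forces $N$ to be invertible, so the rank step is not strictly needed.
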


\begin{proof}
    \color{blue}{
    It suffices to show $HN = I_n$ as Theorem \ref{thm:rank} implies that $H$ is invertible and thus $H = N^{-1}=N^+$. 
    \\
    \textbf{Case 1}: $i = j$.
    \\
    \textbf{Case 1a}: $e_i = (u,v) \in C$.
    $$(FN)_{ii} = f_{iu}\sqrt{w_{uv}} + f_{iv}\sqrt{\overline{w_{uv}}}$$
    $$= (-1)^{d^*(u,u)}\frac{\frac{wt(\gamma_{u \rightarrow u})}{|wt(\gamma_{u \rightarrow u})|}}{\sqrt{\overline{w_{uv}}}(\frac{w_{uv}}{|w_{uv}|}+\frac{wt(\gamma_{u \rightarrow v})}{|wt(\gamma_{u \rightarrow v})|})} \sqrt{w_{uv}} + (-1)^{d^*(u,v)}\frac{\frac{wt(\gamma_{u \rightarrow v})}{|wt(\gamma_{u \rightarrow v})|}}{\sqrt{\overline{w_{uv}}}(\frac{w_{uv}}{|w_{uv}|}+\frac{wt(\gamma_{u \rightarrow v})}{|wt(\gamma_{u \rightarrow v})|})} \sqrt{\overline{w_{uv}}}$$
    because $C$ is an odd cycle, $(-1)^{d^*(u,v)} = +1$. Thus, we get:
    $$= \frac{\frac{w_{uv}}{|w_{uv}|}}{\frac{w_{uv}}{|w_{uv}|}+\frac{wt(\gamma_{u \rightarrow v})}{|wt(\gamma_{u \rightarrow v})}} + \frac{\frac{wt(\gamma_{u \rightarrow v})}{|wt(\gamma_{u \rightarrow v})|}}{\frac{w_{uv}}{|w_{uv}|}+\frac{wt(\gamma_{u \rightarrow v})}{|wt(\gamma_{u \rightarrow v})|}} = 1$$
    \\
    \textbf{Case 1b}: $e_i = (u,v) \notin C$.
    \\
    The proof is identical to that of Case 1b. in Theorem \ref{mpeven}.
    \\
    \textbf{Case 2}: $i \neq j$
    \\
     Let $e_i = (u,v)$ and $e_j = (k,\ell)$.
    \\
    \textbf{Case 2a}: $e_i = (u,v) \in C$. 
    \\
    The proof is identical to that of Case 2a. in Theorem \ref{mpeven}.
    {
    \\
    }
    \\
    \textbf{Case 2b}: $e_i = (u,v) \notin C$. 
    \\
    \textbf{Case 2b.1}: $k,\ell \in G\setminus e_i[C]$. 
    \\
    The proof is identical to that of Case 2b.1 in Theorem \ref{mpeven}.
    {
    }
    \\
    \textbf{Case 2b.2}: $k,\ell \in G\setminus (e_i, v](C)$ or $k,\ell \in G\setminus (e_i, u](C)$.
    \\
    The proof is identical to that of Case 2b.2 in Theorem \ref{mpeven}.
    } 
\end{proof}

\section{Combinatorial Formula of \texorpdfstring{$\vol(N)$}{vol(N)} and \texorpdfstring{$N^+$}{N+} for any Weakly Connected Oriented Graph}

To find the combinatorial formula for the Moore--Penrose inverse $N^+$ of the signless incidence matrix $N$ of any weakly connected oriented graph with complex edge weights, we have the following theorem: 

\begin{theorem} \cite[Thm 2.6, 2.8]{MallikReddyVolume} \label{generalformula}Let $A$ be a $n \times m$ complex matrix with rank $r > 0.$ Then 

\[
A^+ = \frac{1}{\vol^2(A)} \sum_{(i_1, i_2,...,i_r) \in \mathcal{M}} \vol^2(A(i_1, i_2,..., i_r))A(i_1, i_2,..., i_r)^+, 
\]

where $\mathcal{M} = \{(i_1, i_2,...,i_r) \in \mathbb{Z}^+ | 1 \leq i_1 < i_2 < ... < i_r \leq m, \rank(A(i_1, i_2,...,i_r))=r\}$. 
\end{theorem}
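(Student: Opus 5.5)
We would prove this through a full-rank factorization, which reduces the identity to the full-row-rank case; that case is then handled by the uniqueness of $R^+$ together with the Cauchy--Binet formula. Throughout, $A(i_1,\dots,i_r)$ is the $n\times r$ submatrix of $A$ on columns $i_1<\dots<i_r$. Its Moore--Penrose inverse, an $r\times n$ matrix, is regarded as an $m\times n$ matrix supported on rows $i_1,\dots,i_r$; write $E_S$ for this embedding, $S=\{i_1,\dots,i_r\}$. We use $\vol^2(M)=\sum |\text{$r\times r$ minors of }M|^2$.

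\textbf{Step 1 (factorization).} Write $A=CR$ with $C$ an $n\times r$ matrix of full column rank and $R$ an $r\times m$ matrix of full row rank. Then $A(S)=CR_S$, where $R_S$ is the $r\times r$ submatrix of $R$ on the columns in $S$. Hence $S\in\mathcal M$ exactly when $\det R_S\neq 0$. For such $S$, checking the four Penrose equations gives $A(S)^+=R_S^{-1}C^+$ with $C^+=(C^*C)^{-1}C^*$; likewise $A^+=R^+C^+$ with $R^+=R^*(RR^*)^{-1}$. By Cauchy--Binet applied to $\det(A(S)^*A(S))=|\det R_S|^2\det(C^*C)$ and to $\det(AA^*)$-type Gram determinants, we get $\vol^2(A(S))=\vol^2(C)\,|\det R_S|^2$ and $\vol^2(A)=\vol^2(C)\sum_S|\det R_S|^2$. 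The factor $\vol^2(C)$ and the right factor $C^+$ then cancel from both sides. The claim therefore reduces to
\[
R^+ \;=\; X:=\frac{1}{\sum_S |\det R_S|^2}\sum_{S}|\det R_S|^2\,E_SR_S^{-1},
\]
where terms with $\det R_S=0$ contribute nothing.

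\textbf{Step 2 (right inverse).} Since $RE_S=R_S$, each term satisfies $RE_SR_S^{-1}=I_r$. The weights sum to $1$, so $RX=I_r$.

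\textbf{Step 3 (range condition, the main point).} The matrix $R^+$ is the unique right inverse of $R$ whose columns lie in $\operatorname{range}(R^*)=\ker(R)^\perp$. It therefore suffices to show $z^*X=0$ for every $z$ with $Rz=0$. Using $|\det R_S|^2R_S^{-1}=\overline{\det R_S}\,\operatorname{adj}(R_S)$, the $k$-th entry of $z^*\sum_S\overline{\det R_S}E_S\operatorname{adj}(R_S)$ is
\[
\sum_S\overline{\det R_S}\,\det(R'_S),
\]
by cofactor expansion. Here $R'$ is $R$ with its $k$-th row replaced by $\bar z^{T}$, so $R'_S$ is $R_S$ with row $k$ replaced by the restriction of $\bar z^T$ to $S$. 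By Cauchy--Binet this sum equals $\det(R'R^*)$. The $k$-th row of $R'R^*$ is $\bar z^TR^*=\overline{(Rz)}^{T}=0$, so the determinant vanishes. Hence $X=R^+$, and so $A^+=R^+C^+=XC^+$, which is the stated formula.

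The step needing the most care is Step 3: the signs in the cofactor expansion must match so that the sum collapses exactly into a single Cauchy--Binet determinant. A secondary bookkeeping issue is the convention of embedding $A(S)^+$ into an $m\times n$ matrix, which must be fixed as above.
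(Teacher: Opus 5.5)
The paper gives no proof of this statement; it quotes it from Mallik--Reddy. So there is no internal argument to match against, and your proof is correct and self-contained.

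\textbf{Correctness.} The reductions all check out. From $A(S)=CR_S$ you get that $S\in\mathcal M$ if and only if $\det R_S\neq 0$, that $A(S)^+=R_S^{-1}C^+$, and that $\vol^2(A(S))=\det(C^*C)\,|\det R_S|^2$. Characterizing $R^+$ as the unique right inverse of $R$ with columns in $\ker(R)^\perp$ is right. In Step 3 the signs take care of themselves. Since $\mathrm{adj}(R_S)_{jk}$ is the $(k,j)$ cofactor of $R_S$, the $k$-th entry of $\bar z_S^{T}\,\mathrm{adj}(R_S)$ is exactly the row-$k$ cofactor expansion of $\det R'_S$. Cauchy--Binet then gives $\det(R'R^*)$, whose $k$-th row is $\overline{(Rz)}^{T}=0$.

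\textbf{One wording fix.} Do not justify $\vol^2(A)=\vol^2(C)\sum_S|\det R_S|^2$ through $\det(AA^*)$, because that determinant is $0$ whenever $r<n$. Instead, note that $\det A_{IJ}=\det C_{I*}\det R_{*J}$ for every $r\times r$ block, which gives $\vol^2(A)=\vol^2(C)\vol^2(R)$ at once.

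\textbf{Comparison of routes.} In the matrix-volume literature (e.g.\ Ben-Israel's paper in the bibliography), this column formula is usually obtained from Berg's two-sided formula
$A^+=\sum_{I,J}\frac{|\det A_{IJ}|^2}{\vol^2(A)}A_{IJ}^{-1}$,
summed over nonsingular $r\times r$ blocks. One groups the terms sharing a column set $J$ and recognizes
$\sum_I\frac{|\det A_{IJ}|^2}{\vol^2(A(J))}A_{IJ}^{-1}=A(J)^+$.
Your argument bypasses the two-sided formula. The full-rank factorization removes the left factor once and for all, leaving a single nontrivial check, the range condition. That check also shows why the weights must be $|\det R_S|^2$: they are exactly what collapses the sum into one Cauchy--Binet determinant. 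The Berg route gives more, namely the finer decomposition into ordinary inverses and the row version by symmetry. Yours proves, with less machinery, exactly the column version the paper applies to spanning trees and unicyclic spanning subgraphs.
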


Note that when $\rank(A) = r > 0$, the volume squared of $A$, which we will denote by $\vol^2(A)$, can be computed by summing over the squared modulus of the determinants of all $r \times r$ submatrices of $A$ \cite{MallikReddyVolume}. 

Now we will find the volume formulas for the signless incidence matrices of weakly connected oriented graphs using the volume definition from \cite{MallikReddyVolume}. 

\begin{theorem}
\label{thm:volumeoftree}
    Let $T$ be an oriented tree on $n \geq 2 $ vertices with complex edge weights and complex signless incidence matrix $N$. Then $\vol^2(N) = n |W_T|$.  

\begin{proof}
 Given that $T$ is an oriented tree, by Theorem~\ref{thm:rank}, $\rank(N) = n-1$. It follows that $\vol^2(N) = \sum^{n}_{i=1} |\det(N(i; ))|^2$, where $N(i; )$ denotes an $(n-1) \times (n-1)$ submatrix obtained by removing row $i =1,2,...,n$ in $N$. 

 Since every non-trivial tree has at least two leaf vertices, we may perform repeated cofactor expansions along the rows corresponding to such vertices. This process yields 

\[
\det(N(i;)) = \pm \prod_{(u,v) \in E(T)}\sqrt{a_{uv}}
\]
where $a_{uv} \in \{w_{uv},\overline{w_{uv}}\}$ for each edge $(u,v)$ in $T$. 

Taking the modulus of both sides:
\[
|\det(N(i;))| = \prod_{(u,v) \in E(T)} |\sqrt{a_{uv}}| = \prod_{(u,v) \in E(T)} |\sqrt{w_{uv}}| = |\sqrt{W_T}|. 
\]

Therefore, 

\[
\vol^2(N) = \sum^{n}_{i=1} |\det(N(i; ))|^2 = \sum^{n}_{i=1} |W_T| = n|W_T|. 
\]
    
\end{proof}

\end{theorem}

\begin{example}
\begin{figure}[htbp]
\centering
\begin{tikzpicture}[scale=1.2,
                    colorstyle/.style={circle, fill, black, scale=.5},
                    >=stealth]
\tikzset{vertex/.style = {shape = circle, draw, minimum size = 1em, fill=pink!80}}
\tikzset{edge/.style = {->,thick}}

    \node[vertex] (1) at (0,3) {$1$};
    \node[vertex] (2) at (-2,1.5) {$2$};
    \node[vertex] (3) at (2,1.5) {$3$};
    \node[vertex] (4) at (-2,0) {$4$};
    \node[vertex] (5) at (0,0) {$5$};

    \draw[edge] (1) edge node[midway, above left] {$e_1:2i$} (2);
    \draw[edge] (2) edge node[midway, left] {$e_2:4$} (4);
    \draw[edge] (1) edge node[midway, above right] {$e_3:-2i$} (3);
    \draw[edge] (2) edge node[midway, above right] {$e_4:3+4i$} (5);

    \end{tikzpicture}
    \caption{Example of an oriented tree $T$ with complex edge weights}.
    \end{figure}

    We know that the complex signless incidence matrix of $T$ is as follows: 

    \[
    N = 
    \begin{bmatrix}
        1+i & 1-i & 0 & 0  \\
        1-i & 0   & 2 & 2+i \\
        0   & 1+i & 0 & 0   \\
        0   & 0   & 2 & 0 \\
        0   & 0   & 0 & 2-i
    \end{bmatrix}
\]

According to SageMath, $\vol^2(N) = 20.$ 

Theorem~\ref{thm:volumeoftree} confirms this value as 
\[
\vol^2(N) = 5|2i||4||-2i||3+4i| = 400. 
\]
Hence, $\vol(N) = \sqrt{400} = 20. $
\end{example}

\begin{theorem}
\label{thm:volumeofcycle}
    \textcolor{blue}{Let $H$ be an oriented cycle on $n \geq 2$ vertices with non-zero edge weights in $\C \setminus \R^-$ and complex signless incidence matrix $N$.
    \\ If $n$ is even, then $|\det(N)| = \sqrt{2(|wt(H)| - \Re(wt(H)))}$. 
    \\
    \indent (a) If $wt(H) \notin \R^+, \text{then } \det(N) \neq 0 \text{ and } \vol^2(N) = |\det(N)|^2 = 2(|wt(H)| - \Re(wt(H)))$.
    \indent (b) If $wt(H) \in \R^+, \text{then } \det(N) = 0 \text{ and } \vol^2(N) = n|W_H| \sum\limits_{(u,v)\in E(H)} \frac{1}{|w_{uv}|}$.
    \\
    If $n$ is odd, then $|\det(N)| = \sqrt{2(|wt(H)| + \Re(wt(H)))}$.
    \\
    \indent (c) If $wt(H) \notin \R^-, \text{then } \det(N) \neq 0 \text{ and } \vol^2(N) = |\det(N)|^2 = 2(|wt(H)| + \Re(wt(H)))$.
    \indent (d) If $wt(H) \in \R^-, \text{then } \det(N) = 0 \text{ and } \vol^2(N) = n|W_H| \sum\limits_{(u,v)\in E(H)} \frac{1}{|w_{uv}|}$.}
\end{theorem}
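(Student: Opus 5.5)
Label the cycle $H$ as $1,2,\dots,n,1$ and order the edges so that $e_s$ joins $s$ and $s+1$ (indices mod $n$). Then $N$ is a circulant-like bidiagonal matrix: column $s$ has nonzero entries only in rows $s$ and $s+1$. Write $\alpha_s$ for the entry of $N$ in row $s$, column $e_s$, and $\beta_s$ for the entry in row $s+1$, column $e_s$. Each pair $\{\alpha_s,\beta_s\}$ equals $\{\sqrt{w},\sqrt{\overline w}\}$ for the weight $w$ of $e_s$, so by Observation \ref{negreal} we have $\alpha_s\beta_s^{-1}\cdot|\cdot|$-normalized equal to $\sqrt{a_{s,s+1}}/\sqrt{a_{s+1,s}}$, i.e. $\alpha_s/\beta_s = a_{s,s+1}/|a_{s,s+1}|$ up to the modulus.

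Expanding the determinant of this cyclic bidiagonal matrix (Laplace along the first row, or the standard formula for such matrices) gives
\[
\det(N) = \prod_{s=1}^n \alpha_s + (-1)^{n+1}\prod_{s=1}^n \beta_s .
\]
Then $|\det N|^2 = |P_\alpha|^2+|P_\beta|^2 + 2(-1)^{n+1}\Re(P_\alpha\overline{P_\beta})$. Here $|P_\alpha|=|P_\beta|=\prod_s|\sqrt{w_s}| = |W_H|^{1/2} = |wt(H)|^{1/2}$. Also $P_\alpha\overline{P_\beta}=\prod_s \alpha_s\overline{\beta_s}$. Since $\beta_s=\overline{\alpha_s}$ for principal roots of non-negative-real-excluded weights, we get $\alpha_s\overline{\beta_s}=\alpha_s^2 = a_{s,s+1}$. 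Hence $P_\alpha\overline{P_\beta}=wt(H)$ for the orientation $1\to2\to\cdots\to n\to1$. Its real part is orientation independent, because the reversed orientation gives the conjugate. This yields $|\det N|^2 = 2(|wt(H)| - \Re\, wt(H))$ for even $n$ and $2(|wt(H)|+\Re\, wt(H))$ for odd $n$. This is the main computational step; the care needed is the sign $(-1)^{n+1}$ and the identity $\beta_s=\overline{\alpha_s}$, which relies on the principal-root convention.

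For (a) and (c), the determinant vanishes exactly when $\Re\,wt(H)=|wt(H)|$ (even $n$), i.e. $wt(H)\in\R^+$, respectively $wt(H)\in\R^-$ (odd $n$). So when these conditions fail, $N$ is square and nonsingular, and $\vol^2(N)=|\det N|^2$. This is consistent with Theorem \ref{thm:rank}.

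For (b) and (d), the determinant is zero and Theorem \ref{thm:rank} gives $\rank N=n-1$. So $\vol^2(N)$ is the sum of $|\det|^2$ over all $(n-1)\times(n-1)$ submatrices. Each such submatrix deletes one row $i$ and one edge column $e_s$. Deleting $e_s$ leaves the incidence matrix of a spanning path $H-e_s$, a tree. By the cofactor argument in Theorem \ref{thm:volumeoftree}, every row deletion gives $|\det|^2=|W_{H-e_s}| = |W_H|/|w_{e_s}|$. Summing over the $n$ rows and the $n$ choices of $s$ gives $n|W_H|\sum_{(u,v)\in E(H)} 1/|w_{uv}|$. The only subtlety is confirming that every such submatrix has full rank $n-1$, which follows from Theorem \ref{treerank}. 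The case $n=2$ is degenerate (not simple), so we assume $n\ge3$ throughout.
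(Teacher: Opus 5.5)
Your proposal is correct and follows essentially the same route as the paper: bring $N$ to the cyclic bidiagonal form and expand to get $\det N=\prod_s\alpha_s+(-1)^{n-1}\prod_s\beta_s$. Then take the modulus, and in the singular cases sum $|W_{H-e_s}|$ over the $n$ row deletions and $n$ edge deletions using the tree minors. One small improvement over the paper is your evaluation of $|\det N|^2$ through $\beta_s=\overline{\alpha_s}$ and $P_\alpha\overline{P_\beta}=\prod_s a_{s,s+1}=wt(H)$. The paper instead chooses $a_i$ with $a_1\cdots a_n=\sqrt{wt(H)}$, which holds only up to sign for principal roots (harmless there, since only the squared real or imaginary part is used).
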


\begin{proof}
    \textcolor{blue}{
    For all $i \in 1,2,...,n$, let $e_i = (u_i, v_i)$ and $a_i \in \{ \sqrt{w_{u_iv_i}},\sqrt{\overline{w_{u_iv_i}}} \}$ such that $a_1a_2...a_n = \sqrt{wt(H)}$. 
    \\
    We can factorize $N$ as $N = PN'Q$ where $P$ and $Q$ are both permutation matrices and $$N' = 
    \begin{bmatrix}
    a_1 & 0 & 0 & \cdots & 0 & \overline{a_n} \\
    \overline{a_1} & a_2 & 0 & \cdots & 0 & 0 \\
    0 & \overline{a_2} & a_3 & \ddots & \vdots & \vdots \\
    \vdots & \ddots & \ddots & \ddots & \ddots & \vdots \\
    \vdots & \vdots & \ddots & \ddots & a_{n-1} & 0 \\
    0 & \cdots & \cdots & 0 & \overline{a_{n-1}} & a_n
    \end{bmatrix}
    $$
    By doing cofactor expansion across the first row, it then follows that:
    $$\det(N) = \pm1\left( a_1a_2...a_n + (-1)^{n-1}\overline{a_na_1...a_{n-1}} \right)$$
    Thus, if $n$ is even, we can deduce $\det(N) = \pm1\left( a_1a_2...a_n - \overline{a_na_1...a_{n-1}} \right) = \pm2i\Im(\sqrt{wt(H)})$ which implies:
    $$|\det(N)|^2 = \left|2i\Im(\sqrt{wt(H)}\right|^2 = 2\left( |wt(H)| - \Re(wt(H)) \right)$$
    Therefore, $|\det(N)| = \sqrt{2(|wt(H)| - \Re(wt(H)))}$. 
    \\
    \indent (a) If $wt(H) \notin \R^+, \text{then } \det(N) \neq 0 \text{, so by Proposition 1.1 \cite{MallikReddyVolume}, } \vol^2(N) = |\det(N)|^2 = 2(|wt(H)| - \Re(wt(H)))$.
    \\
    Thus, if $n$ is odd, we can deduce $\det(N) = \pm1\left( a_1a_2...a_n + \overline{a_na_1...a_{n-1}} \right) = \pm2\Re(\sqrt{wt(H)})$ which implies:
    $$|\det(N)|^2 = \left|2\Re(\sqrt{wt(H)}\right|^2 = 2\left( |wt(H)| + \Re(wt(H)) \right)$$
    Therefore, $|\det(N)| = \sqrt{2(|wt(H)| + \Re(wt(H)))}$. 
    \\
    \indent (c) If $wt(H) \notin \R^-, \text{then } \det(N) \neq 0 \text{ , so by Proposition 1.1 \cite{MallikReddyVolume}, } \vol^2(N) = |\det(N)|^2 = 2(|wt(H)| + \Re(wt(H)))$.
    \\
    \indent For (b) and (d), we know $\rank(N) \leq n-1$, so we can consider $N(:j)$ which is the corresponding signless incidence matrix for when we delete edge $j$, $e_j = (u,v)$, from $H$. Since $H \setminus e_j$ is a tree, by Theorem \ref{thm:volumeoftree}, we have $|\det(N(i:j))| = \sqrt{|W_{H\setminus e_j}|} = \sqrt{\frac{|W_H|}{|w_{uv}|}}$ which implies $\rank(N) = n-1$. By Theorem 2.4 \cite{MallikReddyVolume}, if we let $i$ index the vertices and $j$ index the edges, we therefore have:
    $$\vol^2(N) = \sum_{i=1}^n\sum_{j=1}^n|\det(N(i:j))|^2 = \sum_{i=1}^n\sum_{j=1}^n {\frac{|W_H|}{|w_{uv}|}} = n{|W_H|}\sum_{j=1}^n {\frac{1}{|w_{uv}|}} = n|W_H| \sum\limits_{(u,v)\in E(H)} \frac{1}{|w_{uv}|}$$
    }
\end{proof}

\begin{lemma}
\label{rankofdisconnectedgraph}
    Let $G$ be a disconnected oriented graph on $n$ vertices with cycles such that $wt(C) \in \mathbb{R^+}$ for each even cycle $C$ and $wt(C) \in \mathbb{R^-}$ for each odd cycle $C$. Then the rank of its complex signless incidence matrix, denoted by $\rank(N)$, is $n-k$, with $k$ as the number of components in $G$. 
\end{lemma}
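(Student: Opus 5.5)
The plan is to reduce to the connected case via a block decomposition of $N$ and then apply Theorem~\ref{thm:rank} to each component. Label the components of $G$ as $G_1,\dots,G_k$, with $n_1,\dots,n_k$ vertices, so that $\sum_{t} n_t = n$. Order the vertices of $G$ component by component, and do the same for the edges. No edge joins two different components, so after these permutations of rows and columns $N$ becomes block diagonal:
\[
N = P\,\mathrm{diag}(N_1,\dots,N_k)\,Q .
\]
Here $P$ and $Q$ are permutation matrices and $N_t$ is the complex signless incidence matrix of $G_t$. Permutations do not change rank, and the rank of a block diagonal matrix is the sum of the ranks of its blocks. Hence $\rank(N)=\sum_{t=1}^k \rank(N_t)$.

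Next I would compute each $\rank(N_t)$ separately. Every cycle of $G_t$ is also a cycle of $G$, so $G_t$ inherits the hypothesis: even cycles have weight in $\R^+$ and odd cycles have weight in $\R^-$. Suppose $n_t\ge 2$. Then $G_t$ is a weakly connected oriented graph with at least one edge, and it is either a tree or satisfies the cycle condition. Theorem~\ref{thm:rank} then gives $\rank(N_t)=n_t-1$; when $G_t$ is a tree, Theorem~\ref{treerank} gives the same value directly.

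The main obstacle is the degenerate case of an isolated vertex, $n_t=1$. Such a component contributes a zero row and no columns, so its block has rank $0 = n_t - 1$. I would say explicitly that this convention is consistent, since Theorem~\ref{thm:rank} assumes $n\ge 2$ and $m\ge 1$ and so does not cover this case. I would also note that the weight restriction to $\C\setminus\R^-$ from Observation~\ref{negreal} is assumed throughout, since Theorem~\ref{thm:rank} relies on it.

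Summing over the components gives
\[
\rank(N)=\sum_{t=1}^k (n_t-1)=n-k,
\]
which is the claim.
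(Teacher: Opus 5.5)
Your proof is correct and is essentially the paper's argument: split $N$ along the components and apply Theorem~\ref{thm:rank} to each one. The only difference is that the paper argues dually, exhibiting one left null vector $x_j$ per component and applying rank--nullity, while you add up the block ranks directly. Your version is slightly tighter: rank additivity over the diagonal blocks is exactly what justifies the paper's unproved claim that the $k$ vectors $x_j$ span the whole left null space, and you also handle isolated vertices explicitly, which do occur in the paper's application in Theorem~\ref{volformulas}.
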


\begin{proof}
    Let $N_j$ denote the signless incidence matrix of an arbitrary component in $G$. By Theorem~\ref{thm:rank}, there exists a nonzero vector $x_j$ such that $N_j^Tx_j = 0$. Since there are $k$ linearly independent vectors $x_j$, they form a complete basis for the left nullspace of $N$. Thus, by the Rank-Nullity Theorem, $\rank(N) = n -k$. 
\end{proof}

\begin{observation}
\label{observation}
    Let $G$ be a graph on $n$ vertices with $n$ edges. If $G$ is connected, then $G$ must be a unicyclic graph. If $G$ is disconnected, then either each component of $G$ has equal number of edges and vertices, or at least one component has fewer edges than vertices. 
\end{observation}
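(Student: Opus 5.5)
The statement is a pure counting fact about a graph $G$ with $n$ vertices and $n$ edges, whose components $G_1,\dots,G_k$ have $n_i$ vertices and $m_i$ edges. The plan is to use only the fact that a connected graph on $p$ vertices has at least $p-1$ edges, with equality exactly for trees. Orientations and weights play no role, so I will work in the underlying simple graph $G_u$ throughout.

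\textbf{Connected case.} Suppose $G$ is connected. Choose a spanning tree of $G_u$; it has $n-1$ edges. Since $G$ has $n$ edges, exactly one edge $e$ lies outside the tree. Adding $e$ to the tree creates exactly one cycle: the unique tree path between the endpoints of $e$, closed up by $e$. Hence $G_u$ contains exactly one cycle, so $G$ is unicyclic.

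\textbf{Disconnected case.} Suppose $k\ge 2$. Then
\[
\sum_{i=1}^k m_i = n = \sum_{i=1}^k n_i, \quad\text{so}\quad \sum_{i=1}^k (m_i - n_i) = 0.
\]
There are two possibilities.
\begin{itemize}
\item If every term $m_i-n_i$ is zero, each component has equally many edges and vertices. This is the first alternative.
\item Otherwise some term is nonzero. The terms sum to zero, so at least one term is negative, that is, $m_i<n_i$ for some component. This is the second alternative.
\end{itemize}
It is also worth noting for later use that a component with $m_i<n_i$ must have $m_i=n_i-1$, since connectivity forces $m_i\ge n_i-1$. Such a component is therefore a tree.

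\textbf{Main obstacle.} There is no real obstacle. The only care needed is in the connected case, where one must justify that $G_u$ has exactly one cycle and not merely at least one. The standard fact that adding one edge to a tree creates a unique cycle supplies this.
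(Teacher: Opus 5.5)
Your proof is correct and complete. The paper states this as an observation without any proof, and your argument is exactly the standard reasoning it takes for granted: the spanning-tree count for the connected case, and the identity $\sum_i (m_i-n_i)=0$ for the disconnected case. Your side remark that a component with fewer edges than vertices must be a tree, together with applying the connected case to each component with $m_i=n_i$, is how the observation is used later, in the proof of Theorem~\ref{volformulas}(b) and in the determinant formula for $Q$.
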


\begin{theorem}\label{volformulas}
    Let $G$ be a weakly connected oriented graph on $n \geq 2$ vertices with oriented cycles and complex edge weights and complex signless incidence matrix $N$. 

    (a) If $wt(C) \in \mathbb{R^+}$ for every even cycle $C$ in $G$ and $wt(C) \in \mathbb{R^-}$ for every odd cycle $C$ in $G$, then 

    \[
    \vol^2(N) = n \sum_{T \in \mathcal{T}(G)} |W_T|, 
    \]

    where $\mathcal{T}(G)$ denotes the set of all oriented spanning trees $T$ of $G$.

    (b) If $G$ contains at least one even cycle $C$ whose $wt(C) \notin \mathbb{R}^+$ or at least one odd cycle $C$ whose $wt(C) \notin \mathbb{R}^-$, then 

    \[
    \vol^2(N) = \sum_{H \in \mathcal{U}(G)} \prod_{i=1}^{c(H)} 2|W_{U_i\setminus C_i}|
    \cdot \alpha(C_i)
    \]

   where $\mathcal{U}(G)$ denotes the set of all spanning subgraphs $H$ of $G$ consisting of $c(H)$ unicyclic components $U_1, U_2, ..., U_{c(H)}$ and 

   \[
   \alpha(C_i) = \begin{cases}
       |wt(C_i)| - \Re(wt(C_i))  & \text{if $C_i$ is an even cycle} \\
       |wt(C_i)| + \Re(wt(C_i))  & \text{if $C_i$ is an odd cycle} 
   \end{cases}
   \]
\end{theorem}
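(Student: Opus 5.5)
The plan is to compute $\vol^2(N)$ as the sum of $|\det|^2$ over all $r\times r$ submatrices of $N$, where $r=\rank(N)$, and then identify which submatrices contribute. By Theorem~\ref{thm:rank}, in case (a) $r=n-1$ and in case (b) $r=n$. An $r\times r$ submatrix is determined by a choice of $r$ edges (a spanning subgraph $H$ of $G$ with $r$ edges) together with a choice of $r$ rows.

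For (a) we take $r=n-1$ and consider a submatrix $N(i;J)$ with $|J|=n-1$ columns and row $i$ deleted. Let $H_J$ be the spanning subgraph with edge set $J$.
\begin{itemize}
\item If $H_J$ is a spanning tree $T$, then $|\det N(i;J)|^2=|W_T|$ for every $i$, by the cofactor expansion in the proof of Theorem~\ref{thm:volumeoftree}. Summing over the $n$ choices of $i$ gives $n|W_T|$.
\item If $H_J$ is not a tree, then it is disconnected, with $k\ge 2$ components. Each component is either a tree or contains cycles, and every such cycle inherits the sign condition of (a).
\item By Lemma~\ref{rankofdisconnectedgraph}, together with Theorem~\ref{treerank} for tree components, the signless incidence matrix of $H_J$ has rank at most $n-k\le n-2$. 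So every $(n-1)\times(n-1)$ minor of it vanishes.
\end{itemize}
The only step needing care is that Lemma~\ref{rankofdisconnectedgraph} is stated for graphs with cycles. For a component that is a tree, the left null vector from Theorem~\ref{treerank} (rank $n_j-1$) plays the same role, and isolated vertices contribute zero rows. Summing over spanning trees then gives $n\sum_{T}|W_T|$.

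For (b) we take $r=n$, so the submatrices are $N(:J)$ with $|J|=n$. By Observation~\ref{observation}, $H_J$ either has some component with fewer edges than vertices, or every component is unicyclic.
\begin{itemize}
\item In the first case, the column set restricted to a component $K$ with fewer edges than vertices has rank less than $|V(K)|$. After permuting rows and columns, $N(:J)$ is block diagonal by components, and this deficient block forces $\det=0$. (Nonsquare blocks already force a zero determinant when counted correctly.)
\item In the second case, $\det N(:J)$ is the product of the determinants of the square blocks $N_{U_i}$, one for each unicyclic component $U_i$.
\end{itemize}

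The key computation is $|\det N_U|^2=2|W_{U\setminus C}|\,\alpha(C)$ for a unicyclic $U$ with cycle $C$. I would prove it by repeatedly expanding along a pendant vertex row, exactly as in Theorem~\ref{thm:volumeoftree}. Each such expansion removes a leaf edge $(u,v)$ and multiplies the determinant by $\sqrt{w_{uv}}$ or $\sqrt{\overline{w_{uv}}}$, which has modulus $\sqrt{|w_{uv}|}$. This continues until only the cycle remains, and then Theorem~\ref{thm:volumeofcycle} gives $|\det N_C|^2=2(|wt(C)|\mp\Re(wt(C)))=2\alpha(C)$, with the sign depending on the parity of $C$. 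Collecting the factors yields $2|W_{U\setminus C}|\alpha(C)$.

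Summing $\prod_i 2|W_{U_i\setminus C_i}|\alpha(C_i)$ over $\mathcal U(G)$ gives the formula. Terms where some $\alpha(C_i)=0$ (a cycle satisfying the degenerate sign condition) contribute zero, which is consistent with Theorem~\ref{thm:volumeofcycle}.

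I expect the main obstacle to be the bookkeeping in (b). One must verify that the block decomposition is legitimate, that is, that the determinant factors over components up to sign. One must also handle the degenerate components, where a nonzero product requires every block to be square and nonsingular. The rank vanishing argument in (a) for components that mix trees and sign-conditioned cycles is the secondary point to check.
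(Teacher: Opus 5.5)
Your proposal is correct and follows essentially the same route as the paper. In (a), only spanning trees contribute, since a non-tree spanning subgraph with $n-1$ edges is disconnected and its incidence matrix has rank at most $n-2$; in (b), you expand $\vol^2(N)=\det(NN^*)$ over spanning subgraphs with $n$ edges, discard those with an edge-deficient component via Observation~\ref{observation}, and evaluate each unicyclic block by leaf expansion plus Theorem~\ref{thm:volumeofcycle}. Your extra care about tree components and isolated vertices in the rank bound also fixes a small imprecision in how the paper invokes Lemma~\ref{rankofdisconnectedgraph}, whose hypothesis only covers graphs with cycles.
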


\begin{proof}
    (a) Since $wt(C) \in \mathbb{R^+}$ for every even cycle $C$ in $G$ and $wt(C) \in \mathbb{R^-}$ for every odd cycle $C$ in $G$, Theorem~\ref{thm:rank} implies that $\rank(N) = n-1$. Hence, 

    \[
    \vol^2(N) = \sum^{n}_{i=1} \sum_{S \subseteq E(G), |S| = n-1} |\det(N(i;S])|^2, 
    \]
    
    where $N(i;S]$ denotes the $(n-1) \times (n-1)$ submatrix obtained by removing row $i =1,2,...,n$ of $N$ and selecting a subset of columns $S$ of size $n-1$. 

    For each $S \subseteq E(G), |S| = n-1$, we can define a spanning subgraph $H$ of $G$ such that 

    \[
    H := (V(G), S). 
    \]

    Then $N(i;S]$ is the complex signless incidence matrix of $H$ with one row removed. 
    
    We consider the following two cases.
    \\
    Case 1: H is connected 
    
    If $H$ is connected, then it is a spanning tree. By Theorem~\ref{thm:volumeoftree}, $|\det(N(i;S])| = \sqrt{|W_H|}$. 
    \\
    Case 2: H is disconnected 
    
    Since $H$ has at least two disconnected components, by Lemma~\ref{rankofdisconnectedgraph}, the rank of its incidence matrix $N(;S]$ must be at most $n-2$. Removing a row will not increase the rank of a matrix so $\rank(N(i;S]) \leq n-2 < n-1$, which implies $N(i;S]$ is not full rank. Consequently, $\det(N(i;S]) = 0$. 
    \\

    (b) Since $G$ contains at least one even cycle $C$ whose $wt(C) \notin \mathbb{R}^+$ or at least one odd cycle $C$ whose $wt(C) \notin \mathbb{R}^-$, by Theorem~\ref{thm:rank}, $\rank(N) = n$. By Binet-Cauchy Theorem \cite{MallikReddyVolume} we have:

    \[
    \vol^2(N) = \det(NN^*) = \sum_{S \subseteq \{1,\ldots,m\},\ |S| = n} |\det(N(;S])|^2
    \]

    where $N(;S]$ denotes an $n \times n$ submatrix formed by keeping all vertices and selecting a subset of exactly $n$ edges chosen from the total $m$ edges in $G$. 

    Similar to the proof in part a, for each $S \subseteq E(G), |S| = n$, we can define a spanning subgraph $H$ of $G$ such that 

    \[
    H := (V(G), S)
    \]

    with $N(;S]$ as the complex signless incidence matrix of $H$. 
    
    We consider the following two cases.
    \\
    \textbf{Case 1}: $H$ is connected 
    
    If $H$ is connected on $n$ vertices with $n$ edges, then $H$ must be a unicyclic graph with oriented cycle $C$. Let $H \setminus C$ be the oriented forest obtained by removing the edges of $C$. 
    
    Then, by sucessive cofactor expansions along the rows corresponding to the leaf vertices of $H$, we have

    \[
    \det(N(;S]) = \prod_{(u,v) \in E(H \setminus C)} \sqrt{a_{uv}} \cdot \det(N_C) 
    \], 
    
    where $a_{uv} \in \{w_{uv},\overline{w_{uv}}\}$ for each edge $(u,v)$ in $H \setminus C$.

    Consequently,  
    \[
    |\det(N(;S])|^2 = \prod_{(u,v) \in E(H \setminus C)} |w_{uv}| \cdot |\det(N_C)|^2 = |W_{H \setminus C}|  \cdot |\det(N_C)|^2
    \]
    
    By Theorem~\ref{thm:volumeofcycle}, if the cycle $C$ is even and $wt(C) \in \mathbb{R}^+$ or if $C$ is odd and $wt(C) \in \mathbb{R}^-$, then $|\det(N(;S])|^2 = 0$. 
    
    On the other hand, if $C$ is even and $wt(C) \notin \mathbb{R}^+$, then $|\det(N(;S])|^2 = 2|W_{H \setminus C}|(|wt(C)| - \Re(wt(C)))$. Similarly, if $C$ is odd and $wt(C) \notin \mathbb{R}^-$, then $|\det(N(;S])|^2 = 2|W_{H \setminus C}|(|wt(C)| + \Re(wt(C)))$. 
    \\
    \textbf{Case 2}: $H$ is disconnected 

    By Observation~\ref{observation}, suppose $H$ is disconnected and at least one component has fewer edges than vertices. We can permute its signless incidence matrix $N(;S]$ into a block matrix where at least one block has more rows than columns. This implies a linear dependency among the rows and forces $\det(N(;S]) = 0$. Consequently, only spanning subgraphs $H$ with disjoint unicyclic components contribute to the volume of $N$. 

    Let $U_1, U_2, ..., U_{c(H)}$ be disjoint unicyclic components of $H$. By permuting the rows and columns of $N(;S]$, we can force $N(;S]$ be a block-diagonal matrix with each block corresponding to a unicyclic component. We have: 

    \[
    |\det(N(;S])|^2 = \prod_{i=1}^{c(H)} |\det(N_{U_i})|^2
    \]

    From here, the proof is similar to case 1. Hence, 

     \[
    \vol^2(N) = \sum_{H \in \mathcal{U}(G)} \prod_{i=1}^{c(H)} 2|W_{U_i\setminus C_i}|
    \cdot \alpha(C_i)
    \]

   where $\mathcal{U}(G)$ denotes the set of all spanning subgraphs $H$ of $G$ consisting of $c(H)$ unicyclic components $U_1, U_2, ..., U_{c(H)}$ and 

   \[
   \alpha(C_i) = \begin{cases}
       |wt(C_i)| - \Re(wt(C_i))  & \text{if $C_i$ is an even cycle} \\
       |wt(C_i)| + \Re(wt(C_i))  & \text{if $C_i$ is an odd cycle} 
   \end{cases}
   \]

\end{proof}

\begin{theorem}\label{final1}
    \textcolor{blue}{Let $G$ be a weakly connected oriented graph on $n \geq 2$ vertices $1,2,..n$ and $m \geq 1$ edges $1,2,..m$ with non-zero edge weights in $\C \setminus \R^-$ and complex signless incidence matrix $N$ such that $G$ contains oriented cycles and $wt(C) \in \R^-$ for all odd cycles and $wt(C) \in \R^+$ for all even cycles. Then, $\rank(N) = n-1$ and 
    $$(N^+)_{ij} = \frac{1}{ \sum_{T \in \mathcal{T}(G)} |W_T|}\sum_{T \in \mathcal{T}(G), e_i \in T}|W_T|(N_T^+)_{ij}$$
    where $\mathcal{T}(G)$ is the set of all oriented spanning trees $T$ of $G$ with $e_i = (u,v) \in T$ and:
    $$
        (N_T^+)_{ij} = \frac{(-1)^{d(e_i,j)}}{n}
    \begin{cases}
        \frac{|T_h(e_i)|}{\sqrt{w_{uv}}}\frac{|wt(\gamma_{j \rightarrow u})|}{wt(\gamma_{j \rightarrow u})} & \text{where } j \in T_t(e_i)
        \\
        \\
        \frac{|T_t(e_i)|}{\sqrt{\overline{w_{uv}}}}\frac{|wt(\gamma_{j \rightarrow v})|}{wt(\gamma_{j \rightarrow v})} & \text{where } j \in T_h(e_i) 
    \end{cases}
    $$
    }
\end{theorem}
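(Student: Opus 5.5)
The plan is to specialize the general volume formula, Theorem~\ref{generalformula}, to $A=N$. First, $\rank(N)=n-1$ by Theorem~\ref{thm:rank}, so $r=n-1$. The index set $\mathcal{M}$ then consists of the $(n-1)$-subsets $S$ of edges for which the column submatrix $N(;S]$ has rank $n-1$.

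\textbf{Step 1: identify $\mathcal{M}$.} Let $H=(V(G),S)$ be the spanning subgraph with edge set $S$.
\begin{itemize}
\item If $H$ is connected, it has $n-1$ edges and so is a spanning tree $T$. By Theorem~\ref{treerank} its incidence matrix $N_T=N(;S]$ has rank $n-1$.
\item If $H$ is disconnected, every cycle of $H$ is a cycle of $G$, so it still satisfies the sign conditions (even cycles in $\R^+$, odd cycles in $\R^-$). Lemma~\ref{rankofdisconnectedgraph}, together with Theorem~\ref{treerank} applied to acyclic components, then gives rank at most $n-2$.
\end{itemize}
Hence $\mathcal{M}$ is in bijection with $\mathcal{T}(G)$.

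\textbf{Step 2: evaluate the terms.} Each tree $T\in\mathcal{T}(G)$ has $\vol^2(N_T)=n|W_T|$ by Theorem~\ref{thm:volumeoftree}. Theorem~\ref{volformulas}(a) gives $\vol^2(N)=n\sum_{T\in\mathcal{T}(G)}|W_T|$. Substituting these into Theorem~\ref{generalformula}, the factors of $n$ cancel and we get
\[
N^+=\frac{1}{\sum_{T\in\mathcal{T}(G)}|W_T|}\sum_{T\in\mathcal{T}(G)}|W_T|\,N(;T]^+ .
\]

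\textbf{Step 3: the embedding convention.} In Theorem~\ref{generalformula}, $A(i_1,\dots,i_r)^+$ must be read as an $m\times n$ matrix. Its rows indexed by $S$ carry the Moore--Penrose inverse of the $n\times(n-1)$ column submatrix, and its remaining rows are zero. I will state this convention explicitly, checking it against the form of the result in \cite{MallikReddyVolume}. Under this convention the $(i,j)$ entry of the summand for $T$ vanishes unless $e_i\in T$, which explains why the sum is restricted to trees containing $e_i$.

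\textbf{Step 4: the entries.} For $e_i\in T$, the submatrix $N(;T]$ is exactly the complex signless incidence matrix $N_T$ of the oriented tree $T$. Its weights are inherited from $G$ and so lie in $\C\setminus\R^-$. Therefore Theorem~\ref{mptree} gives $(N_T^+)_{ij}$ by the displayed formula. Here $d(e_i,j)$, $T_t(e_i)$, $T_h(e_i)$ and the walks $\gamma_{j\to u}$, $\gamma_{j\to v}$ are all taken inside $T$. The unit weights are unambiguous by Observation~\ref{treeunitweight}.

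\textbf{Main obstacle.} I expect the only delicate points to be Step 1's exclusion of disconnected $H$ and the embedding convention in Step 3. For Step 1, a disconnected $H$ with $n-1$ edges must contain a cycle, and that cycle inherits the sign hypotheses, so Lemma~\ref{rankofdisconnectedgraph} applies. Alternatively, I can simply count: each component contributes at least one vector to the left null space, and these vectors have disjoint supports, so they are linearly independent.
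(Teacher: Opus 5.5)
Your proposal is correct and follows the paper's proof. Both arguments apply Theorem~\ref{generalformula} with $r=n-1$, restrict the index set to spanning trees, and take the weights from Theorems~\ref{thm:volumeoftree} and~\ref{volformulas}(a), so that the factor $n$ cancels, with the entries supplied by Theorem~\ref{mptree}. You also spell out two points the paper leaves implicit: the zero-padding convention for the column-submatrix inverses, and the exclusion of disconnected $(n-1)$-edge spanning subgraphs. For the latter, your disjoint-support left-null-vector count also handles isolated vertices and tree components.
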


\begin{proof}
    \textcolor{blue}{Using Theorem \ref{thm:rank}, we deduce that $\rank(N) = n-1$. By Theorem \ref{generalformula}, we need only to consider the spanning trees of $G$ for if there existed a subgraph on $n$ vertices and $n-1$ edges where a connected component contained at least one cycle, then, $\rank(N) < n-1$. In order to find $\vol^2(N(i_1,..,i_{n-1}))$, since $\vol^2(N(i_1,..,i_{n-1})) = \vol^2(N)$ for the subgraph with the vertices $V(G)$ and edges $\{ e_{i_1},..,e_{i_{n-1}} \} \subset E(G)$, we can use Theorem \ref{thm:volumeoftree}. In order to find $N^+(i_1,..,i_{n-1})$, we can use Theorem \ref{mptree}. Finally, using Theorem \ref{volformulas} (a) we have $\vol^2(N)$ for the $N$ corresponding to $G$.
    }
\end{proof}

\begin{theorem}\label{final2}
    \textcolor{blue}{Let $G$ be a weakly connected oriented graph on $n \geq 2$ vertices $1,2,..n$ and $m \geq 1$ edges $1,2,..m$ with non-zero edge weights in $\C \setminus \R^-$ and complex signless incidence matrix $N$ such that $G$ contains oriented cycles and there exists $wt(C) \in \C \setminus \R^-$ for an odd cycle or $wt(C) \in \C \setminus \R^+$ for an even cycle. Then, $\rank(N) = n$ and 
    $$(N^+)_{ij} = \frac{1}{\vol^2(N)}\sum_{H \in \mathcal{U}(G), e_i \in H} (N_H^+)_{ij} \prod_{i = 1}^{c(H)}2|W_{U_i\setminus C_i}|
    \cdot \alpha(C_i)
    $$
    $$
    \vol^2(N) = \sum_{H \in \mathcal{U}(G)} \prod_{i=1}^{c(H)} 2|W_{U_i\setminus C_i}|
    \cdot \alpha(C_i)
    $$
    $$
   \alpha(C_i) = \begin{cases}
       |wt(C_i)| - \Re(wt(C_i))  & \text{if $C_i$ is an even cycle} \\
       |wt(C_i)| + \Re(wt(C_i))  & \text{if $C_i$ is an odd cycle} 
   \end{cases}
   $$
   where $\mathcal{U}(G)$ denotes the set of all spanning subgraphs $H$ of $G$ on $n$ edges consisting of $c(H)$ unicyclic components $U_1, U_2, ..., U_{c(H)}$ with $wt(C) \in \C \setminus \R^+$ for all even cycles and  $wt(C) \in \C \setminus \R^-$ for all odd cycles and when $e_i$ is in $U_r$ for some r. Also, $(N_H^+)_{ij}$ is given by:
   \\
   If $e_i$ is in $U_r$ with even cycle $C_r$, then 
    $$(N^+_{H})_{ij} = 
    \begin{cases}
        (-1)^{d^*(j,u)}\frac{\frac{wt(\gamma_{u \rightarrow j})}{|wt(\gamma_{u \rightarrow j})|}}{\sqrt{\overline{w_{uv}}}(\frac{w_{uv}}{|w_{uv}|}-\frac{wt(\gamma_{u \rightarrow v})}{|wt(\gamma_{u \rightarrow v})|})} & \text{where } e_i = (u,v) \in C_r
        \\
        \\
        0 & \text{where } e_i \notin C \text{and } j \in G \setminus e_i[C_r] 
        \\
        \\
        (-1)^{d^*(j,u)}\frac{1}{\sqrt{{w_{uv}}}}\frac{|wt(\gamma_{j \rightarrow u})|}{wt(\gamma_{j \rightarrow u})} & \text{where } e_i \notin C \text{and } j \in G \setminus (e_i,u](C_r) 
        \\
        \\
        (-1)^{d^*(j,v)}\frac{1}{\sqrt{\overline{w_{uv}}}}\frac{|wt(\gamma_{j \rightarrow v})|}{wt(\gamma_{j \rightarrow v})} & \text{where } e_i \notin C \text{and } j \in G \setminus (e_i,v](C_r) 
        \\
        0 & \text{where } j \notin U_r
    \end{cases}
    $$
    If $e_i$ is in $U_r$ with odd cycle $C_r$, then
    $$(N^+_{H})_{ij} = 
    \begin{cases}
        (-1)^{d^*(j,u)}\frac{\frac{wt(\gamma_{u \rightarrow j})}{|wt(\gamma_{u \rightarrow j})|}}{\sqrt{\overline{w_{uv}}}(\frac{w_{uv}}{|w_{uv}|}+\frac{wt(\gamma_{u \rightarrow v})}{|wt(\gamma_{u \rightarrow v})|})} & \text{where } e_i = (u,v) \in C_r
        \\
        \\
        0 & \text{where } e_i \notin C \text{and } j \in G \setminus e_i[C_r] 
        \\
        \\
        (-1)^{d^*(j,u)}\frac{1}{\sqrt{{w_{uv}}}}\frac{|wt(\gamma_{j \rightarrow u})|}{wt(\gamma_{j \rightarrow u})} & \text{where } e_i \notin C \text{and } j \in G \setminus (e_i,u](C_r) 
        \\
        \\
        (-1)^{d^*(j,v)}\frac{1}{\sqrt{\overline{w_{uv}}}}\frac{|wt(\gamma_{j \rightarrow v})|}{wt(\gamma_{j \rightarrow v})} & \text{where } e_i \notin C \text{and } j \in G \setminus (e_i,v](C_r) 
        \\
        0 & \text{where } j \notin U_r
    \end{cases}
    $$
    where $d^*(s,r)$ and $\gamma_{s \rightarrow r}$ denote the 
    distance and walk, respectively, between vertex $s$ and vertex $r$ in the underlying tree that is formed by removing $e_i$ from $G$. }
\end{theorem}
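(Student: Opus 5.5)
Theorem \ref{final2} follows the same pattern as Theorem \ref{final1}: apply the volume formula of Theorem \ref{generalformula} with $r=n$, and identify each nonzero term combinatorially.

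\textbf{Step 1: rank.} Theorem \ref{thm:rank} gives $\rank(N)\neq n-1$. Since $N$ has $n$ rows and $G$ contains a spanning tree, Theorem \ref{treerank} shows $\rank(N)\ge n-1$. Hence $\rank(N)=n$.

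\textbf{Step 2: apply Theorem \ref{generalformula}.} With $r=n$, the index set $\mathcal{M}$ consists of the $n$-subsets $S$ of edges for which $N(;S]$ is invertible. As in the proof of Theorem \ref{volformulas}(b), we argue as follows.
\begin{itemize}
\item Let $H=(V(G),S)$. By Observation \ref{observation}, if some component of $H$ has fewer edges than vertices, then $\det N(;S]=0$.
\item Otherwise every component $U_r$ is unicyclic. After permuting rows and columns, $N(;S]$ is block diagonal with blocks $N_{U_r}$.
\item Each block has $|\det N_{U_r}|^2=2|W_{U_r\setminus C_r}|\,\alpha(C_r)$.
\item This is nonzero exactly when every even $C_r$ has $wt(C_r)\notin\R^+$ and every odd $C_r$ has $wt(C_r)\notin\R^-$, by Theorem \ref{thm:volumeofcycle}.
\end{itemize}
So $\mathcal{M}$ is indexed by $\mathcal{U}(G)$. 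Since $N(;S]$ is square and invertible, $\vol^2(N(;S])=|\det N(;S]|^2$, which is the product weight in the statement. The denominator $\vol^2(N)$ is Theorem \ref{volformulas}(b).

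\textbf{Step 3: the inverse of each term.} For $S\in\mathcal{M}$, the matrix $N(;S]^+=N(;S]^{-1}$ is placed as an $m\times n$ matrix whose rows outside $S$ are zero. This zero padding is exactly why the sum is restricted to $H$ with $e_i\in H$. Because $N(;S]$ is block diagonal, its inverse is block diagonal with blocks $N_{U_r}^{-1}$. Therefore $(N_H^+)_{ij}=0$ when $j\notin U_r$, and otherwise it equals the corresponding entry of $N_{U_r}^{-1}$. That entry is given by Theorem \ref{mpeven} when $C_r$ is even and by Theorem \ref{mpodd} when $C_r$ is odd. Both apply because the cycle-weight conditions hold. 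In those theorems, $d^*$ and $\gamma$ refer to the tree $U_r\setminus e_i$ when $e_i\in C_r$, and to the tree part otherwise. Substituting Steps 2 and 3 into Theorem \ref{generalformula} yields the stated formula.

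\textbf{Main obstacle.} The delicate point is the block-diagonal reduction. We must justify that the inverse of a block-diagonal matrix permuted by $P,Q$ is the correspondingly permuted block inverse, and that the row and column indices are tracked correctly. We must also confirm that Theorems \ref{mpeven} and \ref{mpodd} are legitimately applied to each component $U_r$ on its own, when $n_r\ge 2$ and its edge weights are in $\C\setminus\R^-$. Here $n_r$ denotes the number of vertices of $U_r$. A further point needing care is that the distances and walks are taken within $U_r$ rather than within $G$. I would settle this first, by checking $N_{U_r}^{-1}N_{U_r}=I$ directly on the component and then assembling the blocks.
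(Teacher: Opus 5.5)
Your proposal is correct and follows the same route as the paper. Apply Theorem~\ref{generalformula} with $r=n$, and discard every $n$-edge spanning subgraph that is not a disjoint union of unicyclic components with admissible cycle weights. Then use the block-diagonal decomposition to get $\vol^2(N_H)=\prod_r \vol^2(N_{U_r})$, obtain $N_H^{-1}$ blockwise from Theorems~\ref{mpeven} and~\ref{mpodd}, and take the denominator from Theorem~\ref{volformulas}(b). Your explicit rank argument (rank at least $n-1$ from a spanning tree, not equal to $n-1$ by Theorem~\ref{thm:rank}) and your remark that the zero-padding of $N(;S]^{-1}$ restricts the sum to $H\ni e_i$ make precise steps the paper leaves implicit. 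Your worry about $n_r\ge 2$ is vacuous, since every unicyclic component of a simple graph has at least three vertices.
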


\begin{proof}
    \textcolor{blue}{Using Theorem \ref{thm:rank}, we deduce that $\rank(N) = n$. By Theorem \ref{generalformula}, we need only to consider the spanning subgraphs of $G$ that consist only of unicyclic connected components where each unicyclic component is full rank (i.e. $wt(C) \in \C \setminus \R^+$ for all even cycles and  $wt(C) \in \C \setminus \R^-$ for all odd cycles) for if there existed a subgraph on $n$ vertices and $n$ edges where at least one connected component contained a tree, then, $\rank(N) < n$, and if a connected component contained at least one multicyclic component, we cannot have $n$ vertices and $n$ edges. 
    \\
    Note that $N_H = P_1N'P_2$ where $P_1$ and $P_2$ are permutation matrices and $N'$ is a block diagnol matrix of $N_{U_r}$ for all ${U_r}$ in $H$.
    To find $\vol^2(N(i_1,..,i_{n}))$, since $\vol^2(N(i_1,..,i_{n})) = \vol^2(N) = \vol^2(P_1N'P_2) = \vol^2(N') = \prod_{r=1}^{c(H)}\vol^2(N_{U_r})$  for N corresponding to the subgraph of $G$ with vertices $V(G)$ and edges $\{e_{i_1},..,e_{i_{n}} \} \subset E(G)$. We can use  Theorem \ref{volformulas} (b) to find $\vol^2(N_{U_r})$. In order to find $N^+(i_1,..,i_{n-1})$, we can use Theorem \ref{mpeven} and Theorem \ref{mpodd}. Finally, using Theorem \ref{volformulas} (b) we have $\vol^2(N)$ for the $N$ corresponding to $G$.
    }
\end{proof}

\begin{example}
\begin{figure}[htbp]
\centering
\begin{tikzpicture}[
    node distance=2cm and 3cm,
    vertex/.style={
        circle, 
        draw=black, 
        fill=pink!80, 
        minimum size=5mm, 
        inner sep=0pt,
        font=\small\bfseries
    },
    edge/.style={
        ->, 
        >={Stealth[scale=2]}, 
        thick, 
        shorten >=1pt
    },
    weight/.style={
        font=\small\color{blue}, 
        fill=white, 
        inner sep=2pt
    }
    ]

    \node[vertex] (3) at (4,0) {3};
    \node[vertex] (1) at (2,4) {1};
    \node[vertex] (2) at (0,0) {2};
    \node[vertex] (4) at (7,0) {4};
    \node[vertex] (5) at (11,0) {5};
    \node[vertex] (6) at (11,4) {6};
    \node[vertex] (7) at (7,4) {7};

    \draw[<-] (2) -- node[weight] {1 $: e_1$} (1);
    \draw[<-] (1) -- node[weight] {1 $: e_3$} (3);
    \draw[<-] (4) -- node[weight] {1 $: e_4$} (3);
    \draw[<-] (3) -- node[weight] {9 $: e_2$} (2);
    \draw[<-] (5) -- node[weight] {3\i $: e_5$} (4);
    \draw[<-] (6) -- node[weight] {-\i $: e_6$} (5);
    \draw[<-] (7) -- node[weight] {4 $: e_7$} (6);
    \draw[<-] (4) -- node[weight] {1 $: e_8$} (7);

    \end{tikzpicture}
    \caption{\color{blue}{Example of a graph $G$ such that $G$ has an odd cycle, $C$,  with $wt(C) \in \C \setminus \R^-$}.}\label{finalex2}
    \end{figure}
    \color{blue}{Using Sage math and rounding to the nearest fifth digit if applicable, we get that for the graph in Figure \ref{finalex2}, $(N^+)_{23} = 0.16667$, $(N^+)_{35} = 0.5\i$, $(N^+)_{42} = 0$, $(N^+)_{56} = -0.19754 + 0.19754\i$, $(N^+)_{85} = 0.12903\i$. We will use Theorem \ref{final2} to confirm these values.}

\begin{figure}[htbp]
\begin{minipage}[t]{0.48\textwidth}
\begin{tikzpicture}[
scale=0.65, 
every node/.append style={transform shape}, 
    node distance=2cm and 3cm,
    vertex/.style={
        circle, 
        draw=black, 
        fill=pink!80, 
        minimum size=5mm, 
        inner sep=0pt,
        font=\small\bfseries
    },
    edge/.style={
        ->, 
        >={Stealth[scale=2]}, 
        thick, 
        shorten >=1pt
    },
    weight/.style={
        font=\small\color{blue}, 
        fill=white, 
        inner sep=2pt
    }
    ]

    \node[vertex] (3) at (4,0) {3};
    \node[vertex] (1) at (2,4) {1};
    \node[vertex] (2) at (0,0) {2};
    \node[vertex] (4) at (7,0) {4};
    \node[vertex] (5) at (11,0) {5};
    \node[vertex] (6) at (11,4) {6};
    \node[vertex] (7) at (7,4) {7};

    \draw[<-] (2) -- node[weight] {1 $: e_1$} (1);
    \draw[<-] (1) -- node[weight] {1 $: e_3$} (3);
    \draw[<-] (4) -- node[weight] {1 $: e_4$} (3);
    \draw[<-] (3) -- node[weight] {9 $: e_2$} (2);
    \draw[<-] (6) -- node[weight] {-\i $: e_6$} (5);
    \draw[<-] (7) -- node[weight] {4 $: e_7$} (6);
    \draw[<-] (4) -- node[weight] {1 $: e_8$} (7);
    \end{tikzpicture}
\end{minipage}
\begin{minipage}[t]{0.48\textwidth}
\begin{tikzpicture}[
scale=0.65, 
every node/.append style={transform shape}, 
    node distance=2cm and 3cm,
    vertex/.style={
        circle, 
        draw=black, 
        fill=pink!80, 
        minimum size=5mm, 
        inner sep=0pt,
        font=\small\bfseries
    },
    edge/.style={
        ->, 
        >={Stealth[scale=2]}, 
        thick, 
        shorten >=1pt
    },
    weight/.style={
        font=\small\color{blue}, 
        fill=white, 
        inner sep=2pt
    }
    ]

    \node[vertex] (3) at (4,0) {3};
    \node[vertex] (1) at (2,4) {1};
    \node[vertex] (2) at (0,0) {2};
    \node[vertex] (4) at (7,0) {4};
    \node[vertex] (5) at (11,0) {5};
    \node[vertex] (6) at (11,4) {6};
    \node[vertex] (7) at (7,4) {7};

    \draw[<-] (2) -- node[weight] {1 $: e_1$} (1);
    \draw[<-] (1) -- node[weight] {1 $: e_3$} (3);
    \draw[<-] (4) -- node[weight] {1 $: e_4$} (3);
    \draw[<-] (3) -- node[weight] {9 $: e_2$} (2);
    \draw[<-] (5) -- node[weight] {3\i $: e_5$} (4);
    \draw[<-] (7) -- node[weight] {4 $: e_7$} (6);
    \draw[<-] (4) -- node[weight] {1 $: e_8$} (7);
    \end{tikzpicture}
\end{minipage}

\begin{tikzpicture}[
scale=0.65, 
every node/.append style={transform shape}, 
    node distance=2cm and 3cm,
    vertex/.style={
        circle, 
        draw=black, 
        fill=pink!80, 
        minimum size=5mm, 
        inner sep=0pt,
        font=\small\bfseries
    },
    edge/.style={
        ->, 
        >={Stealth[scale=2]}, 
        thick, 
        shorten >=1pt
    },
    weight/.style={
        font=\small\color{blue}, 
        fill=white, 
        inner sep=2pt
    }
    ]

    \node[vertex] (3) at (4,0) {3};
    \node[vertex] (1) at (2,4) {1};
    \node[vertex] (2) at (0,0) {2};
    \node[vertex] (4) at (7,0) {4};
    \node[vertex] (5) at (11,0) {5};
    \node[vertex] (6) at (11,4) {6};
    \node[vertex] (7) at (7,4) {7};

    \draw[<-] (2) -- node[weight] {1 $: e_1$} (1);
    \draw[<-] (1) -- node[weight] {1 $: e_3$} (3);
    \draw[<-] (4) -- node[weight] {1 $: e_4$} (3);
    \draw[<-] (3) -- node[weight] {9 $: e_2$} (2);
    \draw[<-] (5) -- node[weight] {3\i $: e_5$} (4);
    \draw[<-] (6) -- node[weight] {-\i $: e_6$} (5);
    \draw[<-] (4) -- node[weight] {1 $: e_8$} (7);
    \end{tikzpicture}
\begin{tikzpicture}[
scale=0.65, 
every node/.append style={transform shape}, 
    node distance=2cm and 3cm,
    vertex/.style={
        circle, 
        draw=black, 
        fill=pink!80, 
        minimum size=5mm, 
        inner sep=0pt,
        font=\small\bfseries
    },
    edge/.style={
        ->, 
        >={Stealth[scale=2]}, 
        thick, 
        shorten >=1pt
    },
    weight/.style={
        font=\small\color{blue}, 
        fill=white, 
        inner sep=2pt
    }
    ]

    \node[vertex] (3) at (4,0) {3};
    \node[vertex] (1) at (2,4) {1};
    \node[vertex] (2) at (0,0) {2};
    \node[vertex] (4) at (7,0) {4};
    \node[vertex] (5) at (11,0) {5};
    \node[vertex] (6) at (11,4) {6};
    \node[vertex] (7) at (7,4) {7};

    \draw[<-] (2) -- node[weight] {1 $: e_1$} (1);
    \draw[<-] (1) -- node[weight] {1 $: e_3$} (3);
    \draw[<-] (4) -- node[weight] {1 $: e_4$} (3);
    \draw[<-] (3) -- node[weight] {9 $: e_2$} (2);
    \draw[<-] (5) -- node[weight] {3\i $: e_5$} (4);
    \draw[<-] (6) -- node[weight] {-\i $: e_6$} (5);
    \draw[<-] (7) -- node[weight] {4 $: e_7$} (6);
    \end{tikzpicture}
    \caption{\color{blue}{These are all possible $H$ of the graph $G$ shown in Figure \ref{finalex2}.}\label{finalex2H}}
\end{figure}

We get that $\vol^2(N) = 2(4)(18) + 2(12)(18) + 2(3)(18) + 2(12)(18) = 144 + 432 + 108 + 432 = 1116$ which agrees with Sage Math. 

\begin{align*}
(N^+)_{23} ={} &\frac{1}{1116} \Biggl[ (-1)^{d^*(3,2)}\frac{\frac{wt(\gamma_{2 \rightarrow 3})}{|wt(\gamma_{2 \rightarrow 3})|}}{\sqrt{9}(1+1)}144 
  + (-1)^{d^*(3,2)}\frac{\frac{wt(\gamma_{2 \rightarrow 3})}{|wt(\gamma_{2 \rightarrow 3})|}}{\sqrt{9}(1+1)}432 \\
& + (-1)^{d^*(3,2)}\frac{\frac{wt(\gamma_{2 \rightarrow 3})}{|wt(\gamma_{2 \rightarrow 3})|}}{\sqrt{9}(1+1)}108 
  + (-1)^{d^*(3,2)}\frac{\frac{wt(\gamma_{2 \rightarrow 3})}{|wt(\gamma_{2 \rightarrow 3})|}}{\sqrt{9}(1+1)}432 \Biggr] \\
= {} & \frac{1116}{6 \cdot 1116} = 0.16667 \quad 
\end{align*}

$$(N^+)_{42} = 0 $$
$$(N^+)_{56} = \frac{1}{1116} \left[(-1)^{d^*(6,5)}\frac{1}{\sqrt{-3\i}}\frac{1}{\i}108 +  (-1)^{d^*(6,5)}\frac{1}{\sqrt{-3\i}}\frac{1}{\i}432 \right]= \frac{-540}{1116 \cdot \i\sqrt{-3\i}} = -0.19754 + 0.19754\i $$
$$(N^+)_{85} = \frac{1}{1116} \left[(-1)^{d^*(5,7)}\frac{-1}{\i}144 \right] = \frac{-144}{1116\i} =0.12903\i $$
\end{example}

\section{Combinatorial Formulas of \texorpdfstring{$Q^+$}{Q+} and \texorpdfstring{$\det(Q)$}{det(Q)} for any Weakly Connected Oriented Graph}

\begin{theorem}
    \color{blue}{Let $G$ be a weakly connected oriented graph on $n \geq 2$ vertices $1,2,..n$ and $m \geq 1$ edges $1,2,..m$ with non-zero edge weights in $\C \setminus \R^-$ and complex signless Laplacian matrix $Q$. Then, 
    $$\det(Q) = \sum_{H \in \mathcal{U}(G)}  \prod_{i=1}^{c(H)} 2|W_{U_i\setminus C_i}|
    \cdot \alpha(C_i)$$
    
   \[
   \alpha(C_i) = \begin{cases}
       |wt(C_i)| - \Re(wt(C_i))  & \text{if $C_i$ is an even cycle} \\
       |wt(C_i)| + \Re(wt(C_i))  & \text{if $C_i$ is an odd cycle} 
   \end{cases}
   \]
   where $\mathcal{U}(G)$ is the set of all spanning subgraphs of $G$ on $n$ vertices and $n$ edges where the connected components of $H$ are all unicyclic graphs such that $wt(C) \in \C \setminus \R^-$ for all odd unicyclic components and $wt(C) \in \C \setminus \R^+$ for all even unicyclic components. Note that $c(H)$ denotes the number of connected components of $H$ and $U_i$ is a connected component of $H$ with cycle $C_i$.
   }
\end{theorem}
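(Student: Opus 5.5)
The plan is to use $Q = NN^*$ together with the Cauchy--Binet formula, reducing the statement to the volume computation already done in Theorem~\ref{volformulas}. Since $N$ is $n\times m$, Cauchy--Binet gives
\[
\det(Q)=\det(NN^*)=\sum_{S\subseteq E(G),\ |S|=n}|\det(N(;S])|^2 .
\]
If $m<n$ the sum is empty, so $\det(Q)=0$. In that case $\mathcal{U}(G)$ is also empty, since a spanning subgraph with all components unicyclic needs exactly $n$ edges, and the formula holds trivially.

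\textbf{Case split.} I would distinguish the two situations of Theorem~\ref{thm:rank}.

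\emph{Singular case.} If $G$ is a tree, or every even cycle has positive weight and every odd cycle has negative weight, then $\rank(N)=n-1$. Hence $\rank(Q)=\rank(N)=n-1$ and $\det(Q)=0$. It remains to check that the right-hand side is also $0$. Any $H\in\mathcal{U}(G)$ would contain a cycle $C_i$ of $G$ that is even with $wt(C_i)\notin\R^+$, or odd with $wt(C_i)\notin\R^-$. That is impossible under the hypothesis, so $\mathcal{U}(G)=\emptyset$ and the sum is $0$. Equivalently, every term has $\alpha(C_i)=0$: for an even cycle with $wt\in\R^+$ we get $|wt|-\Re(wt)=0$, and for an odd cycle with $wt\in\R^-$ we get $|wt|+\Re(wt)=0$.

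\emph{Full-rank case.} Otherwise $\rank(N)=n$, and $\det(Q)=\det(NN^*)=\vol^2(N)$. The formula then follows from Theorem~\ref{volformulas}(b). I would recall the reason briefly. For each $n$-subset $S$, set $H=(V(G),S)$.
\begin{itemize}
\item If some component of $H$ has fewer edges than vertices (Observation~\ref{observation}), the block structure forces $\det(N(;S])=0$.
\item Otherwise every component $U_i$ is unicyclic. After permuting rows and columns, $N(;S]$ is block diagonal, and peeling leaves gives
\[
|\det N_{U_i}|^2=|W_{U_i\setminus C_i}|\,|\det N_{C_i}|^2 .
\]
Theorem~\ref{thm:volumeofcycle} then gives $|\det N_{C_i}|^2 = 2\alpha(C_i)$.
\end{itemize}

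\textbf{Restriction to $\mathcal{U}(G)$.} Summing over all such $H$ reproduces the claimed expression. Unicyclic components whose cycle weight is in $\R^+$ (even) or $\R^-$ (odd) contribute $\alpha=0$. So restricting the sum to $\mathcal{U}(G)$, as the statement does, changes nothing.

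\textbf{Main obstacle.} The delicate step is the leaf-peeling identity $|\det N_U|^2=|W_{U\setminus C}|\,|\det N_C|^2$. Each cofactor expansion along a pendant vertex row picks up a single entry $\sqrt{w}$ or $\sqrt{\overline w}$, of modulus $\sqrt{|w|}$, and the leftover matrix is the signless incidence matrix of a smaller unicyclic graph. Induction on the number of non-cycle edges then completes it.

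One consistency point needs attention: the $\alpha$ formula uses $wt(C)$, while Theorem~\ref{thm:volumeofcycle} uses the product $a_1\cdots a_n$ with $a_1a_2\cdots a_n=\sqrt{wt(C)}$. The two orientations of the cycle give conjugate weights, and $|wt|\pm\Re(wt)$ is invariant under conjugation, so the expression is well defined.
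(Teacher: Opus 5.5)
Your proposal is correct and follows essentially the same route as the paper. Both apply Cauchy--Binet to $Q=NN^*$ and discard every $n$-edge spanning subgraph that has a component with fewer edges than vertices, or a unicyclic component whose cycle weight lies in $\R^+$ (even cycle) or $\R^-$ (odd cycle); the surviving block-diagonal minors are then evaluated by leaf peeling and Theorem~\ref{thm:volumeofcycle}, exactly as in Theorem~\ref{volformulas}(b). Your split into rank $n-1$ versus rank $n$, instead of the paper's tree versus non-tree, is only organizational, since Cauchy--Binet already covers the balanced singular case, where $\mathcal{U}(G)=\emptyset$.
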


\begin{proof}
    \color{blue}{
    \textbf{Case 1: } $G$ is a tree.
    \\
    \indent Since $Q = NN^*$ we have that $\rank(Q) = \rank(NN^*) = \rank(N) = n-1$ by Theorem \ref{thm:rank}. Since $Q$ has dimension $n \times n$, this implies that $\det(Q) = 0$. Since $G$ is a tree, this means $H$ is empty and the empty sum is indeed $0$.
    \\
    \textbf{Case 2: } $G$ has a cycle.
    \\
    \indent Since $G$ has a cycle, it follows that $m \geq n$. Using the Cauchy-Binet Formula, we deduce:
    \[
        \det(Q) = \det(NN^*) = \sum_{\substack{S \subseteq \{1,2,..,m\} \\ |S| = n}}\det(N(:,S])\det(\overline{N(:,S]}) = \sum_{\substack{S \subseteq \{1,2,..,m\} \\ |S| = n}}|\det(N(:,S])|^2
    \]
    For all $S$, $N(:,S]$ corresponds to the signless incidence matrix of a spanning subgraph of $G$ containing all edges of the form $e_i$ where $i \in S$ which we denote as the subgraph $H$. We then let $N(:,S] = N_H$. 
    \\
    If $H$ contained a vertex of degree $0$, then performing a cofactor expansion across its corresponding row in $N_H$ gives $\det(N_H) = 0$. If $H$ contained at least one connected component that was a tree, then $N_H$ would have $\rank(N_H) < n$ and thus $\det(N_H) = 0$. If $H$ has no vertex of degree 0, no tree component, and contained at least one multi-cyclic connected component, then $H$ cannot have $n$ vertices and edges. Thus, $H$ must only contain unicyclic components and if there exists an odd unicyclic component with negative real weight or if there exists an even unicyclic component with positive real weight, then because $\rank(N_H) < n$, we have $\det(N_H) = 0$. Therefore, only when $H$ is a spanning subgraph of $G$ on $n$ vertices and $n$ edges where the connected components of $H$ are all unicyclic graphs such that $wt(C) \in \C \setminus \R^-$ for all odd unicyclic components and $wt(C) \in \C \setminus \R^+$ for all even unicyclic components is $\det(N_H)$ non-trivial. 
    \\
    Since $N_H = PN'Q$ where $P,Q$ are permutation matrices and $N'$ is a block diagonal matrix of the signless incidence matrix of each unicyclic component, $U_i$, we get:
    $$|\det(N_H)|^2 = \prod_{i=1}^{c(H)} |\det(N_{U_i})|^2$$
    Using Theorom \ref{volformulas} (b) and the fact that $\vol^2(N_{U_i}) = |\det(N_{U_i})|^2$, we get $$|\det(N_{U_i})|^2 = 2|W_{U_i\setminus C_i}|
    \cdot \alpha(C_i)
    $$
    Thus, in summary we have:
    $$\det(Q) = \sum_{H \in \mathcal{U}(G)}  \prod_{i=1}^{c(H)} 2|W_{U_i\setminus C_i}|
    \cdot \alpha(C_i)$$
    $$
   \alpha(C_i) = \begin{cases}
       |wt(C_i)| - \Re(wt(C_i))  & \text{if $C_i$ is an even cycle} \\
       |wt(C_i)| + \Re(wt(C_i))  & \text{if $C_i$ is an odd cycle} 
   \end{cases}
   $$
    }
\end{proof}

\begin{theorem}\label{final3}
    \color{blue}{Let $G$ be a weakly connected oriented graph on $n \geq 2$ vertices $1,2,..n$ and $m \geq 1$ edges $1,2,..m$ with non-zero edge weights in $\C \setminus \R^-$ and complex signless Laplacian matrix $Q$ such that $G$ contains oriented cycles and $wt(C) \in \R^-$ for all odd cycles and $wt(C) \in \R^+$ for all even cycles. Then, 
    $$(Q^+)_{ij} = \Biggl[ \frac{1}{ \sum_{T_ \in \mathcal{T}(G)} |W_{T}|} \Biggr]^2\sum_{k=1}^m \sum_{\substack{T_r, T_s \in \mathcal{T}(G) \\ e_k \in T_r \cap T_s}}
    |W_{T_r}||W_{T_s}|\overline{(N^+_{T_r})_{ki}}(N^+_{T_s})_{kj}$$
    where the entries of $N^+_{T_r}$ and $N^+_{T_s}$ are given by:
    $$
        (N^+_T)_{ij} = \frac{(-1)^{d(e_i,j)}}{n}
    \begin{cases}
        \frac{|T_h(e_i)|}{\sqrt{w_{uv}}}\frac{|wt(\gamma_{j \rightarrow u})|}{wt(\gamma_{j \rightarrow u})} & \text{where } j \in T_t(e_i)
        \\
        \\
        \frac{|T_t(e_i)|}{\sqrt{\overline{w_{uv}}}}\frac{|wt(\gamma_{j \rightarrow v})|}{wt(\gamma_{j \rightarrow v})} & \text{where } j \in T_h(e_i) 
        \\
    \end{cases}
    $$
    }
\end{theorem}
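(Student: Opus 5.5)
The plan is to reduce everything to the identity $Q^+ = (NN^*)^+ = (N^*)^+N^+ = (N^+)^*N^+$, which holds for any complex matrix $N$, and then to substitute the combinatorial formula for $N^+$ from Theorem~\ref{final1}. First I will verify the identity directly from the four Penrose conditions. Put $X=(N^+)^*N^+$. Using $NN^+N=N$, $N^+NN^+=N^+$ and the Hermitian property of $NN^+$ and $N^+N$, I check that $QXQ=Q$, $XQX=X$, and that $QX=(N^+)^*N^*=(NN^+)^*=NN^+$ and $XQ=(N^+)^*N^*=(NN^+)^*$ are Hermitian. In detail:
\begin{align*}
QX &= NN^*(N^+)^*N^+ = N(N^+N)^*N^+ = N N^+ N N^+ = NN^+ ,\\
XQ &= (N^+)^*N^+NN^* = (N^+)^*(N^+N)^*N^* = (N N^+ N N^+)^* = (NN^+)^*,
\end{align*}
and both are Hermitian because $NN^+$ is. Uniqueness of the Moore--Penrose inverse then gives $Q^+=X$.

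Next I write the entries as a sum over edges: $(Q^+)_{ij}=\sum_{k=1}^m \overline{(N^+)_{ki}}\,(N^+)_{kj}$, since $N^+$ is $m\times n$. By the hypotheses and Theorem~\ref{thm:rank}, $\rank(N)=n-1$, so Theorem~\ref{final1} applies and gives
\[
(N^+)_{kj}=\frac{1}{\sum_{T\in\mathcal{T}(G)}|W_T|}\sum_{T\in\mathcal{T}(G),\, e_k\in T}|W_T|\,(N_T^+)_{kj},
\]
where $(N_T^+)_{kj}$ is the tree formula of Theorem~\ref{mptree}.

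Substituting this twice, conjugating the first factor, and using that $|W_T|$ and the normalizing sum are positive reals (so conjugation leaves them unchanged), the product of the two sums expands into a double sum over pairs $(T_r,T_s)$ of spanning trees that both contain $e_k$. Summing over $k$ then gives exactly the stated expression, with the prefactor squared.

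The only real point of care is bookkeeping rather than mathematics. The two trees must be allowed to be independent, including $T_r=T_s$. An edge $e_k$ lying in no spanning tree cannot occur, since each edge extends to some spanning tree, but such terms would vanish anyway. Finally, the indices must be kept straight: the row index of $N_T^+$ is the edge $e_k$ and the column index is the vertex $i$ or $j$.
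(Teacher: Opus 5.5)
Your proposal is correct and follows the paper's proof. Both write $Q^+=(N^+)^*N^+$, expand $(Q^+)_{ij}=\sum_{k}\overline{(N^+)_{ki}}(N^+)_{kj}$, substitute Theorem~\ref{final1}, and multiply out the two sums over spanning trees containing $e_k$; your only addition is checking $(NN^*)^+=(N^+)^*N^+$ from the Penrose conditions, which the paper simply asserts.
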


\begin{proof}
    \color{blue}{Since $Q = NN^*$, we have $Q^+ = (N^+)^*N^+$. Therefore,
    $$
    (Q^+)_{ij} = \sum_{k=1}^m((N^+)^*)_{ik}(N^+)_{kj} = \sum_{k=1}^m\overline{(N^+)_{ki}}(N^+)_{kj}
    $$
    Using Theorem \ref{final1}, we then get:
    $$
    (Q^+)_{ij} = \sum_{k=1}^m \Biggl[ \frac{1}{ \sum_{T_r \in \mathcal{T}(G)} |W_{T_r}|}\sum_{T_r \in \mathcal{T}(G), e_k \in T_r}|W_{T_r}|\overline{(N_{T_r}^+)_{ki}} \times \frac{1}{ \sum_{T_s \in \mathcal{T}(G)} |W_{T_s}|}\sum_{T_s \in \mathcal{T}(G), e_k \in T_s}|W_{T_s}|(N_{T_s}^+)_{kj} \Biggr]
    $$
    $$
    = \Biggl[ \frac{1}{ \sum_{T_ \in \mathcal{T}(G)} |W_{T}|} \Biggr]^2 \sum_{k=1}^m\sum_{\substack{T_r, T_s \in \mathcal{T}(G) \\ e_k \in T_r \cap T_s}}
    |W_{T_r}||W_{T_s}|\overline{(N^+_{T_r})_{ki}}(N^+_{T_s})_{kj}
    $$
    }
\end{proof}

\begin{theorem}\label{final4}
   \color{blue}{Let $G$ be a weakly connected oriented graph on $n \geq 2$ vertices $1,2,..n$ and $m \geq 1$ edges $1,2,..m$ with non-zero edge weights in $\C \setminus \R^-$ and complex signless Laplacian matrix $Q$ such that $G$ contains oriented cycles and there exists $wt(C) \in \C \setminus \R^-$ for an odd cycle or $wt(C) \in \C \setminus \R^+$ for an even cycle. Then,
   $$(Q^+)_{ij} = \frac{1}{\vol^4(N)}\sum_{k=1}^m \sum_{\substack{H_r,H_s \in 
   \mathcal{U}(G) \\ e_k \in H_r \cap H_s}} \prod_{t = 1}^{C(H_r)}2|W_{U_t \setminus C_t}| \cdot  \alpha(C_t) \prod_{z = 1}^{C(H_s)}2|W_{U_z \setminus C_z}| \cdot  \alpha(C_z)\overline{(N^+_{H_r})_{ki}}(N^+_{H_s})_{kj}$$
   where $\mathcal{U}(G)$ denotes the set of all spanning subgraphs $H$ of $G$ on $n$ edges consisting of $c(H)$ unicyclic components $U_1, U_2, ..., U_{c(H)}$ with $wt(C) \in \C \setminus \R^+$ for all even cycles and  $wt(C) \in \C \setminus \R^-$ for all odd cycles and when $e_i$ is in $U_r$ for some r. Also:
\\
    $$\alpha(C_i) = \begin{cases}
       |wt(C_i)| - \Re(wt(C_i))  & \text{if $C_i$ is an even cycle} \\
       |wt(C_i)| + \Re(wt(C_i))  & \text{if $C_i$ is an odd cycle} 
    \end{cases}
    $$
    and $(N_{H_r}^+)_{ij}$ and $(N_{H_s}^+)_{ij}$ are given by:
   \\
   If $e_i$ is in $U_r$ with even cycle $C_r$, then 
    $$(N^+_{H})_{ij} = 
    \begin{cases}
        (-1)^{d^*(j,u)}\frac{\frac{wt(\gamma_{u \rightarrow j})}{|wt(\gamma_{u \rightarrow j})|}}{\sqrt{\overline{w_{uv}}}(\frac{w_{uv}}{|w_{uv}|}-\frac{wt(\gamma_{u \rightarrow v})}{|wt(\gamma_{u \rightarrow v})|})} & \text{where } e_i = (u,v) \in C_r
        \\
        \\
        0 & \text{where } e_i \notin C \text{and } j \in G \setminus e_i[C_r] 
        \\
        \\
        (-1)^{d^*(j,u)}\frac{1}{\sqrt{{w_{uv}}}}\frac{|wt(\gamma_{j \rightarrow u})|}{wt(\gamma_{j \rightarrow u})} & \text{where } e_i \notin C \text{and } j \in G \setminus (e_i,u](C_r) 
        \\
        \\
        (-1)^{d^*(j,v)}\frac{1}{\sqrt{\overline{w_{uv}}}}\frac{|wt(\gamma_{j \rightarrow v})|}{wt(\gamma_{j \rightarrow v})} & \text{where } e_i \notin C \text{and } j \in G \setminus (e_i,v](C_r) 
        \\
        0 & \text{where } j \notin U_r
    \end{cases}
    $$
    If $e_i$ is in $U_r$ with odd cycle $C_r$, then
    $$(N^+_{H})_{ij} = 
    \begin{cases}
        (-1)^{d^*(j,u)}\frac{\frac{wt(\gamma_{u \rightarrow j})}{|wt(\gamma_{u \rightarrow j})|}}{\sqrt{\overline{w_{uv}}}(\frac{w_{uv}}{|w_{uv}|}+\frac{wt(\gamma_{u \rightarrow v})}{|wt(\gamma_{u \rightarrow v})|})} & \text{where } e_i = (u,v) \in C_r
        \\
        \\
        0 & \text{where } e_i \notin C \text{and } j \in G \setminus e_i[C_r] 
        \\
        \\
        (-1)^{d^*(j,u)}\frac{1}{\sqrt{{w_{uv}}}}\frac{|wt(\gamma_{j \rightarrow u})|}{wt(\gamma_{j \rightarrow u})} & \text{where } e_i \notin C \text{and } j \in G \setminus (e_i,u](C_r) 
        \\
        \\
        (-1)^{d^*(j,v)}\frac{1}{\sqrt{\overline{w_{uv}}}}\frac{|wt(\gamma_{j \rightarrow v})|}{wt(\gamma_{j \rightarrow v})} & \text{where } e_i \notin C \text{and } j \in G \setminus (e_i,v](C_r) 
        \\
        0 & \text{where } j \notin U_r
    \end{cases}
    $$
    and $\vol^2(N)$ is given by Theorem \ref{volformulas} (b). 
   }
\end{theorem}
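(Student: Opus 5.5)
The plan mirrors the proof of Theorem \ref{final3}, with Theorem \ref{final2} taking the place of Theorem \ref{final1}. First I would establish $Q^+ = (N^+)^*N^+$. Since $Q = NN^*$, set $X = (N^+)^*N^+$ and check the four Penrose conditions. The key facts are that $NN^+$ and $N^+N$ are Hermitian and idempotent, and that $N^*(N^+)^* = (N^+N)^* = N^+N$. For example,
\[
QXQ = N\bigl(N^*(N^+)^*\bigr)\bigl(N^+N\bigr)N^* = N(N^+N)(N^+N)N^* = NN^+NN^* = NN^* = Q.
\]
The other three conditions are checked in the same way; this is the standard identity $(NN^*)^+ = (N^*)^+N^+$. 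Alternatively, by Theorem \ref{thm:rank}, under the hypotheses $\rank(N)=n$, so $Q$ is invertible. Then $N^+ = N^*(NN^*)^{-1}$, and $(N^+)^*N^+ = (NN^*)^{-1}NN^*(NN^*)^{-1} = Q^{-1}$ directly. I would use this second route because it is cleaner.

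Next I would expand entrywise:
\[
(Q^+)_{ij} = \sum_{k=1}^m \overline{(N^+)_{ki}}\,(N^+)_{kj}.
\]
Into each factor I would substitute the formula of Theorem \ref{final2},
\[
(N^+)_{ki} = \frac{1}{\vol^2(N)}\sum_{H\in\mathcal{U}(G),\, e_k\in H} (N^+_H)_{ki}\prod_{t=1}^{c(H)} 2|W_{U_t\setminus C_t}|\,\alpha(C_t).
\]
The weights $\prod 2|W_{U_t\setminus C_t}|\alpha(C_t)$ and $\vol^2(N)$ are nonnegative reals, since $\alpha(C)\ge 0$. Hence conjugation acts only on $(N^+_{H_r})_{ki}$. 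Multiplying the two sums over $H_r$ and $H_s$ turns the product into a double sum over pairs $(H_r,H_s)$ with $e_k\in H_r\cap H_s$, with prefactor $1/\vol^4(N)$. This gives the stated formula, with $(N^+_H)$ supplied by Theorems \ref{mpeven} and \ref{mpodd} (block-diagonally, with zeros off the component $U_r$ containing $e_k$), and $\vol^2(N)$ supplied by Theorem \ref{volformulas}(b).

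The main point needing care is justifying that the weights are real and nonnegative, so that they pass through the complex conjugate unchanged. This follows from $\alpha(C)=|wt(C)|\mp\Re(wt(C))\ge0$. The rest is bookkeeping: the inner index in Theorem \ref{final2} (there called $i$ for the edge) is relabelled as $k$, and the index restriction $e_k\in H$ is inherited on each side. I expect no genuine obstacle beyond this notational consistency.
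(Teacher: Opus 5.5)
Your proposal is correct and follows the paper's proof. You write $Q^+=(N^+)^*N^+$, expand $(Q^+)_{ij}=\sum_{k=1}^m\overline{(N^+)_{ki}}(N^+)_{kj}$, substitute Theorem~\ref{final2} into both factors, and multiply the two sums into a double sum over pairs $(H_r,H_s)$ with prefactor $1/\vol^4(N)$. Your justification of $Q^+=(N^+)^*N^+$ (via $\rank(N)=n$, so $Q$ is invertible and $N^+=N^*Q^{-1}$) and your remark that the weights $\prod 2|W_{U_t\setminus C_t}|\,\alpha(C_t)$ are nonnegative reals unaffected by conjugation supply steps the paper uses without comment.
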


\begin{proof}
    \color{blue}{Since $Q = NN^*$, we have $Q^+ = (N^+)^*N^+$. Therefore,
    $$
    (Q^+)_{ij} = \sum_{k=1}^m((N^+)^*)_{ik}(N^+)_{kj} = \sum_{k=1}^m\overline{(N^+)_{ki}}(N^+)_{kj}
    $$
    By Theorem \ref{final2}, we can then deduce:
    $$(Q^+)_{ij} = \sum_{k=1}^m \Biggl[\frac{1}{\vol^2(N)}\sum_{H_r \in \mathcal{U}(G), e_k \in H_r} \overline{(N_{H_r}^+)_{ki}} \prod_{t = 1}^{c(H_r)}2|W_{U_t\setminus C_t}|
    \cdot \alpha(C_t) 
    $$
    $$
    \times \frac{1}{\vol^2(N)}\sum_{H_s \in \mathcal{U}(G), e_k \in H_s}{(N_{H_s}^+)_{kj}} \prod_{z = 1}^{c(H_s)}2|W_{U_z\setminus C_z}|
    \cdot \alpha(C_z) \Biggr]
    $$
    $$= \frac{1}{\vol^4(N)}\sum_{k=1}^m \sum_{\substack{H_r,H_s \in 
   \mathcal{U}(G) \\ e_k \in H_r \cap H_s}} \prod_{t = 1}^{C(H_r)}2|W_{U_t \setminus C_t}| \cdot  \alpha(C_t) \prod_{z = 1}^{C(H_s)}2|W_{U_z \setminus C_z}| \cdot  \alpha(C_z)\overline{(N^+_{H_r})_{ki}}(N^+_{H_s})_{kj}
    $$
    }   
\end{proof}

\section*{Acknowledgment}
We thank Professor Tom Cuchta of Marshall University for organizing the Appalachian Mathematics and Physics Site REU, during which we conducted this research, and the National Science Foundation for funding this research under NSF REU Grant No. $2349289$.


\newpage
\begin{thebibliography}{99}
\bibitem{Milica1} A. Alazemi, O. Alhalabi, and M. Andeli\'c, Moore--Penrose inverse of the signless Laplacians of bipartite graphs, {\em Bull. Iran. Math. Soc.} 49, 51 (2023). 

\bibitem{Milica2} A. Alazemi, M. Andelic, and S. Mallik, Signed graphs and inverses of their incidence matrices, {\em Linear Algebra Appl.} 694 (2024) 78--100.


\bibitem{AB} A. Azimi and R. B. Bapat, Moore--Penrose inverse of the incidence matrix of a distance regular graph, {\em Linear Algebra Appl.} 551 (2018) 92--103.

\bibitem{ABE} A. Azimi, R. B. Bapat, and E. Estaji, Moore--Penrose inverse of incidence matrix of graphs with complete and cyclic blocks, {\em Discrete Math.} 342 (2019) 10--17.

\bibitem{graphs and matrices} R. B. Bapat, {\it Graphs and matrices},  Springer, 2014.

\bibitem{bap} R. B. Bapat, Moore--Penrose inverse of the incidence matrix of a tree, {\em Linear  Multilinear Algebra}. 42 (1997) 159--167.

\bibitem{MP2026} S. Barik, S. Mallik, and S. Reddy, Moore-Penrose inverse of the complex Laplacian of an oriented graph, {\em Discrete Math.} 349 (2026) 115117. 

\bibitem{SUG} S. Barik, S. U. Reddy, and  G. Sahoo, On singularity and properties of eigenvectors of complex Laplacian matrix of multidigraphs, {\em AKCE Int. J. Graphs Comb.} 20(2) (2023) 125--133.
\bibitem{SUG2} S. Barik and S. U. Reddy, {\it On the $A_{\mathbb{C}}$-rank of multidigraphs}, Aequat. Math. 98 (2024) 189--213.

\bibitem{SG} S. Barik and  G. Sahoo, A new matrix representation of multidigraphs, {\em AKCE Int. J. Graphs Comb.} 17(1) (2020) 466--479.


\bibitem{Ben-Israel} A. Ben-Israel, A volume associated with $m\times n$ matrices, {\em Linear Algebra Appl.} 167 (1992) 87--111.

\bibitem{BG} A. Ben-Israel and T. N. E. Greville, {\em Generalized Inverses: Theory and Applications}, Wiley-Interscience, 1974.


\bibitem{CRC}  D. Cvetkovi\' c, P. Rowlinson, and S. K. Simi\' c, Signless Laplacians of finite graphs,{\em Linear Algebra Appl.} 423 (2007) 155--171.


\bibitem{Harary} F. Harary, The number of oriented graphs, {\em Michigan Math. J.} 4(3) (1957) 221--224. DOI: 10.1307/mmj/1028997952.

\bibitem{HM} K. Hassani Monfared and S. Mallik, An analog of Matrix Tree Theorem for signless Laplacians, {\em Linear Algebra Appl.} 560 (2019) 43--55.

\bibitem{HM1} R. Hessert and S. Mallik, Moore--Penrose Inverses of the Signless Laplacian and Edge-Laplacian of Graphs, {\em Discrete Math.}  344 (2021) 112451.

\bibitem{HM2} R. Hessert and S. Mallik, The inverse of the incidence matrix of a unicyclic graph, {\em Linear Multilinear Algebra.} 71(4) (2023) 513--527.

\bibitem{I} Y. Ijiri, On the generalized inverse of an incidence matrix, {\em Jour. Soc. Indust. Appl. Math.} 13(3) (1965) 827--836 .

\bibitem{Ipsen} J. Ipsen and S. Mallik, Incidence and Laplacian matrices of wheel graphs and their inverses,  {\em Am. J. Comb.} 2 (2023) 1--19.


\bibitem{MallikReddyVolume} S. Mallik and S. U. Reddy, Volume of a complex matrix and its applications in oriented graphs, {\it Matematički Vesnik} 78(2) (2026) (to appear).  

\end{thebibliography}
\end{document}